\newtheorem{theorem}{Theorem}[section]
\newtheorem{lemma}[theorem]{Lemma}
\newtheorem{proposition}[theorem]{Proposition}
\newtheorem{corollary}[theorem]{Corollary}
\theoremstyle{definition}
\newtheorem{definition}[theorem]{Definition}
\theoremstyle{remark}
\newtheorem{remark}[theorem]{Remark}
\numberwithin{equation}{section}
\newcommand{\la}{\lambda}
\newcommand{\al}{\alpha}
\begin{document}

\title{Applications of Laplace-Beltrami operator for Jack polynomials}
\author{Wuxing Cai}
\address{School of Sciences,
South China University of Technology, Guangzhou 510640, China}
\email{caiwx@scut.edu.cn}
\author{Naihuan Jing$^*$}
\address{Department of Mathematics,
   North Carolina State University,
   Raleigh, NC 27695-8205, USA}
\email{jing@math.ncsu.edu}
\thanks{*Corresponding author}
\thanks{Jing gratefully acknowledges the support from
NSFC's Overseas Young Collaborative Grant (10801094).}
\keywords{Laplace-Beltrami operators, Symmetric functions, Jack
polynomials, Virasoro algebra} \subjclass[2000]{Primary: 05E05;
Secondary: 17B69, 05E10}

\begin{abstract}
We use a new method to study the Laplace-Beltrami type operator on the Fock space of
symmetric functions, and as an example of our explicit computation we show that the Jack symmetric functions
are the only family of eigenvectors of the differential operator.
As applications of this explicit method we find a combinatorial formula for Jack symmetric functions
and  the Littlewood-Richardson coefficients in the Jack case.
As further applications, we obtain a new determinantal formula for Jack symmetric functions.
We also obtained a generalized raising operator formula for Jack symmetric functions, 
and a formula for the explicit action
of Virasoro operators. Special cases of our formulas imply
Mimachi-Yamada's result on Jack symmetric functions of rectangular shapes, as well as
the explicit formula for Jack functions of two rows or two columns.
\end{abstract}
 \maketitle
\section{Introduction}
For $\alpha\in\mathbb C$ the generalized Laplace-Beltrami operator
\begin{equation}
L(\alpha)=\frac{\alpha}2\sum_{i=1}^n(x_i\frac{\partial}{\partial
x_i})^2+\frac1{2}\sum_{i<j}^n\frac{x_i+x_j}{x_i-x_j}(x_i\frac{\partial}{\partial
x_i}-x_j\frac{\partial}{\partial x_j})
\end{equation}
is the Hamiltonian for the Calogero-Sutherland-Moses model.
Macdonald showed that the eigenstates are the Jack symmetric
polynomials \cite{Ja} in variables $x_1, x_2$, $\cdots$, $x_n$. When
$\alpha=1, 2, 1/2$ the Laplace-Beltrami operator is the radial
Laplace operator for the symmetric spaces over the complex, real and
quaternion fields. Like Schur functions, the Jack functions are also
polynomials in power sum symmetric functions
$p_k=\sum_{i=1}^nx_i^k$, $k=1, \cdots, n$. Moreover it is a
fundamental fact that these symmetric functions enjoy the stability
property that $P_{\lambda}(x_1, \cdots, x_n;\alpha)$ is the same
polynomial as long as $n>|\lambda|$, i.e., they are actually
polynomials in variables $p_k=\sum_{i=1}^{\infty}x_i^k$. It is
advantageous to view $P_{\lambda}(x; \alpha)$ as an element in
$\mathbb Q(\alpha)[p_1, p_2, \cdots]$.

It was shown by Soko \cite{So} that
\begin{equation}
L(\alpha)(m_{\la})=\left[\sum_{i=1}^{l(\la)}\left(\frac{\alpha}2\la_i^2+
\frac12(n+1-2i)\la_i\right)\right]m_{\la}+\sum_{\mu<\la}c_{\la\mu}m_{\mu}.
\end{equation}
The action of the finite Laplace operator can be used to study Jack symmetric functions,
for instance a determinant formula \cite{LLM} is a direct consequence.
It is clear that a direct generalization of this finite Laplace
operator will not work as the eigenvalue does not make sense when
$n\longrightarrow \infty$.

In this paper we consider a Laplace-Beltrami like operator in
infinitely many variables which has all the favorite properties
enjoyed by the finite Laplace-Beltrami operator, moreover the
eigenvalues of the new operator can be used to distinguish
eigenstates. Our starting point is the observation that the space
$\Lambda=\mathbb C[p_1, p_2, p_3\ldots]$ is actually the Fock space
for the infinite dimensional Heisenberg algebra generated by $h_n$
with relations
\begin{align*}
[h_m, h_n]=m\alpha\delta_{m, -n}Id.
\end{align*}
If we identify $h_{-n}$ with $p_n$, the ring of symmetric functions is isomorphic to
the Fock space of the Heisenberg Lie algebra, which is the canonical irreducible representation of the
latter. Under this identification the
generating function of the generalized homogeneous symmetric
functions $q_n$ is exactly half of the vertex operator \cite{J2,
CJ}.

There is an indirect method to show that the operator diagonalize
Jack functions. For our later purpose we still
provide a direct method to show that $P_{\lambda}(\alpha; x)$ are eigenfunctions of the
following graded differential operator
$$\sum_{i,j\geq
1}ij\alpha^2 p_{i+j}\frac{\partial^2}{\partial p_{i}\partial
p_{j}}+\sum_{i,j\geq 1}(i+j)\alpha
p_{i}p_{j}\frac{\partial}{\partial
p_{i+j}}+\alpha(\alpha-1)\sum_{i\geq 1}i^2
p_{i}\frac{\partial}{\partial p_{i}}
$$
and the eigenvalues can distinguish Jack functions.

Using our new method, we derive an explicit action of $D(\alpha)$ on the basis of
generalized homogeneous functions $q_{\la}(\alpha)$ or
monomial symmetric functions $m_{\lambda}$, which establish a priori
the triangularity of the transition matrix between these bases and
Jack polynomials in one step. We then use the differential operator
to give

(i) an explicit iteration formula for the coefficients of
$Q_{\lambda}$ in terms of $q_{\lambda}$;

(ii) a combinatorial formula for Littlewood-Richardson coefficient and
provide a formula for Stanley's conjecture for Jack polynomials;

(iii) reformulation of Stanley
formula for two columns and Jing-J\'ozefiak formula for two rows.

It is well-known that Jack polynomials span a representation of
Virasoro algebra and the extremal vectors or singular vectors are
the Jack polynomials of rectangular shapes. This result was
originally proved by Mimachi-Yamada using differential equations and
it was done over finitely many variables.

Adopting the same idea as
in the Laplace-Beltrami like operator, we give an explicit action of Virasoro algebra on Jack functions
which confirms several conjectural formulas made by Sakamoto et al \cite{SSAFR}.
As a consequence
we obtain a simpler proof of
Mimachi-Yamada's result using the Feigin-Fuchs realization.

This paper is organized as follows. In section 2, we recall some
basic notions about symmetric functions. We introduce the
differential operator of Laplace-Beltrami type in section 3 and
compute its action on generalized homogeneous polynomials to give a
new characterization of Jack symmetric functions. In section 4 we derive
a raising operator formula for the action of Laplace-Beltrami operator.
Next in section 5 we derive several applications of our new differential operator: a
determinant formula for Jack symmetric functions, an iterative
formula for the transition matrix from generalized homogeneous
functions or monomial symmetric functions to Jack functions, and
explicit action of Virasoro operators on Jack functions. We also
show how our method can be used to give combinatorial formulas for
Jack symmetric functions and then for generalized
Littlewood-Richardson coefficients.

 \section{Jack functions}
   We first recall a few basic notions about symmetric functions \cite{M,S}.
   A partition $\lambda$ is a sequence of nonnegative
   integers, written usually in decreasing order as
   $\lambda=(\la_1,\lambda_2,\cdots,\lambda_s)$, sometimes also as $(1^{m_1}~2^{m_2}\cdots)$,
   where $m_i=m_i(\la)$ is the multiplicity of $i$ occurring in the parts of
    $\la$. The length of $\la$, denoted as $l(\lambda)$, is the number of non-zero
    parts in $\lambda$, and the weight $|\lambda|$ is $\sum_iim_i$. It is
    convenient to denote $m(\lambda)!=m_1!m_2!\cdots$, $z_\lambda=\prod_{i\geq 1}i^{m_i}m_i!$. A partition $\la$ of
weight $n$ is usually denoted by $\la\vdash n$. The set of all
partitions is
   denoted as $\mathcal {P}$. For two partitions
$\la=(\lambda_1,\lambda_2,\cdots)$ and $\mu=(\mu_1,\mu_2,\cdots)$ of
the same weight, we say that $\lambda\geq\mu$ if
$\lambda_1+\cdots+\lambda_i\geq\mu_1+\cdots+\mu_i$ for all
   $i$, this defines what's called dominance ordering. There is a canonical total ordering on
   $\mathcal{P}_n$, the reverse
lexicographic ordering. For $\lambda=(\lambda_1,\lambda_2,\cdots)$
and $\mu=(\mu_1,\mu_2,\cdots)$ in $\mathcal{P}_n$, we say that
$\lambda$ is greater than $\mu$ in reverse lexicographic ordering,
denoted as $\lambda>^L\mu$, if the first non-vanishing difference
$\lambda_i-\mu_i$ is positive, and $\lambda\geq^L\mu$ means
$\lambda>^L\mu$ or $\lambda=\mu$. Sometimes $\lambda$ is  identified
with its Young diagram $\lambda=\{(i,j)|1\leq i\leq l(\lambda),
1\leq j\leq\lambda_i\}$. Thus $\mu\subseteq\lambda$ if and only if
   $\lambda_i\geq\mu_i$ for all $i$, and in this case we define skew
   partition $\lambda/\mu$ as the set difference of $\lambda$ and
   $\mu$. We say that $\lambda/\mu$ is a horizontal-$n$ strip if it contains $n$ squares
   with no two squares in the same column.
    The conjugate of $\la$, $\lambda'=(\lambda'_1,\lambda'_2,\cdots)$,
   is a partition whose diagram is the transpose of the diagram of $\la$,
   hence $\la'_i$ is the number of the $j$'s such that $\la_j\geq i$.
    For square $s=(i,j)\in\la$, the lower hook-length $h_*^\lambda(s)$ is defined
to be
   $\alpha(\lambda_i-j)+(\lambda'_j-i+1)$, and the upper hook-length
$h^*_\lambda(s)=\alpha(\lambda_i-j+1)+(\lambda'_j-i)$.

   \begin{definition}\label{bottomnotation}
For a subset $S$ of partition $\la$, define $h_*^\la(S)$ as
$\prod_{s\in S}h_*^\la(s)$, and similarly for $h_\la^*(S)$. For
partitions $\mu\subseteq\la$, we say that a square $s$ is bottomed
if $s$ is in the column which contains at least one square of
$\la/\mu$, and it's un-bottomed otherwise. We denote $\mu_b$ (resp.
$\la_b$) as the set of the bottomed squares of $\mu$(resp. $\la$),
and $\mu_u$ (resp. $\la_\mu$) as the set of the un-bottomed squares
of $\mu$ (resp. $\la$).
\end{definition}

The ring $\Lambda$ of symmetric functions is a $\mathbb Z$-module
with basis $m_{\la}=\sum x^{\la_1}_{i_1}\cdots x_{i_k}^{\la_k}$,
$\la\in\mathcal {P}$. The power sum symmetric functions
$p_{\la}=p_{\la_1}\cdots p_{\la_k}$ form a basis of
$\Lambda_\mathbb{Q}=\Lambda\otimes_\mathbb{Z}\mathbb{Q}$.

Let $F=\mathbb{Q}(\alpha)$ be the field of rational functions in
indeterminate $\alpha$. The Jack polynomial \cite{Ja} is a special
orthogonal symmetric function of
$\Lambda_F=\Lambda\otimes_\mathbb{Z} F$ under the following inner
product. For two partitions $\la, \mu \in \mathcal P$ the (Jack)
scalar product on $\Lambda_F$ is given by
\begin{align} \label{def}
\langle p_{\la}, p_{\mu}\rangle=\delta_{\la,
\mu}\alpha^{l(\la)}z_\lambda,
\end{align}
where $\delta$ is the Kronecker symbol.

The Jack symmetric functions $P_{\lambda}(\alpha)$ for
$\la\in\mathcal{P}$ are defined by the following \cite{M}:
\begin{align*}
&P_{\la}(\alpha)=\sum_{\la\geq\mu}c_{\la
\mu}(\alpha)m_{\mu},\\
&\langle
P_{\la}(\alpha),P_{\mu}(\alpha\rangle=0~~\mbox{for}~\la\neq\mu,
\end{align*}
where $c_{\la \mu}(\alpha)\in F$ ($\la, \mu \in \mathcal P$) and
$c_{\la \la}(\alpha)=1$.

Defined by $\langle
Q_{\lambda}(\alpha),P_{\mu}(\alpha)\rangle=\delta_{\la,\mu}$, the
dual Jack function $Q_{\lambda}(\alpha)=\langle P_{\lambda},
P_{\lambda}\rangle^{-1}P_{\lambda}(\alpha)$. Another normalization
$J_\lambda(\alpha)$ of Jack symmetric function is also useful. Let
$$J_\lambda(\alpha)=\sum_{\nu\leq\lambda}v_{\lambda\nu}(\alpha)m_\nu,$$
with the normalization defined by
$v_{\lambda,(1^{|\lambda|})}=|\lambda|!$.

 The generalized homogeneous symmetric functions of $\Lambda_F$ are
defined by
\begin{equation}
q_{\la}(\alpha)=Q_{\la_1}(\alpha)Q_{\la_2}(\alpha)\cdots
Q_{\la_l}(\alpha),
\end{equation}
where $Q_{(n)}(\alpha)$, simplified as $Q_n(\alpha)$, is known as
\begin{equation}\label{E:homogeneous1}
Q_n(\alpha)=\sum_{\lambda\vdash
n}\alpha^{-l(\lambda)}z_\lambda^{-1}p_{\lambda}.
\end{equation}
Thus $Q_i(\alpha)=0$ for $i<0$ and $Q_0(\alpha)=1$. Its
generating function is:
$$Y(z)=exp\Big(\sum_{n=1}^\infty
\frac{z^n}{n\alpha}p_n\Big)=\sum_n Q_{n}(\alpha)z^n.$$

For convenience we may omit the parameter $\alpha$ in $Q_n(\alpha)$
and $q_\la(\alpha)$, simply write them as $Q_n$ and
$q_\la$.
 The
following theorem of Stanley will be needed in our paper.
\begin{theorem} \cite{S} \label{T:Stanley}
For partitions $\mu$ ,$\lambda$, and positive integer $n$, $\langle
J_nJ_\mu,J_\lambda\rangle\neq 0$ if and only if
$\mu\subseteq\lambda$ and $\lambda/\mu$ is a horizontal $n$-strip.
And in this case we have
 \begin{equation}
\langle
J_n(\alpha)J_\mu(\alpha),J_\lambda(\alpha)\rangle=h^\mu_*(\mu_u)h^*_\mu(\mu_b)\cdot
h^*_n(n)\cdot h^*_\lambda(\lambda_u)h^\lambda_*(\lambda_b).
\end{equation}
\end{theorem}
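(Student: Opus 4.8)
The plan is to reduce the statement to the Jack specialization of Macdonald's Pieri rule for multiplication by the generalized homogeneous function $Q_n(\alpha)$, together with his product formula for the Pieri coefficient, and then to convert everything into the $J$-normalization by bookkeeping with hook lengths. First I would record the normalization dictionary. It is classical (\cite{M}, Ch.~VI) that the normalization $v_{\lambda,(1^{|\lambda|})}(\alpha)=|\lambda|!$ forces $J_\lambda(\alpha)=c_\lambda P_\lambda(\alpha)$ with $c_\lambda=h^\lambda_*(\lambda)=\prod_{s\in\lambda}h^\lambda_*(s)$, and that $\langle J_\lambda,J_\lambda\rangle=h^\lambda_*(\lambda)\,h^*_\lambda(\lambda)$; combined with $Q_\lambda(\alpha)=\langle P_\lambda,P_\lambda\rangle^{-1}P_\lambda(\alpha)$ this gives $c_\lambda\langle P_\lambda,P_\lambda\rangle=\langle J_\lambda,J_\lambda\rangle/c_\lambda=h^*_\lambda(\lambda)$. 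For a single row, a short computation with \eqref{E:homogeneous1} (comparing the coefficient of $m_{(1^n)}$) shows $J_n(\alpha)=\alpha^n n!\,Q_n(\alpha)=h^*_n(n)\,Q_n(\alpha)$. Denoting by $\psi'_{\lambda/\mu}(\alpha)$ the coefficient in the Pieri expansion $Q_n(\alpha)\,P_\mu(\alpha)=\sum_\nu\psi'_{\nu/\mu}(\alpha)\,P_\nu(\alpha)$, bilinearity and orthogonality of the $P_\nu$ then yield
\[
\langle J_nJ_\mu,J_\lambda\rangle=h^*_n(n)\,c_\mu\,c_\lambda\,\langle Q_nP_\mu,P_\lambda\rangle=h^*_n(n)\,c_\mu\,c_\lambda\langle P_\lambda,P_\lambda\rangle\,\psi'_{\lambda/\mu}(\alpha)=h^*_n(n)\,h^\mu_*(\mu)\,h^*_\lambda(\lambda)\,\psi'_{\lambda/\mu}(\alpha).
\]

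The ``if and only if'' part is then immediate from the Pieri rule: $\psi'_{\lambda/\mu}=0$ unless $\mu\subseteq\lambda$ and $\lambda/\mu$ is a horizontal $n$-strip (the Jack limit of \cite{M}, Ch.~VI; at $\alpha=1$ this is the classical Pieri rule, and the general vanishing follows from degree, the triangularity of $P$ in the monomial basis, and the scalar product). Conversely, when $\lambda/\mu$ is a horizontal $n$-strip, Macdonald's combinatorial formula writes $\psi'_{\lambda/\mu}(\alpha)$ as a product of factors of the form $h^\kappa_*(s)$ and $h^*_\kappa(s)^{-1}$, each strictly positive for real $\alpha>0$, so $\psi'_{\lambda/\mu}\neq 0$ in $\mathbb{Q}(\alpha)$ and hence $\langle J_nJ_\mu,J_\lambda\rangle\neq 0$.

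It remains to match $h^\mu_*(\mu)\,h^*_\lambda(\lambda)\,\psi'_{\lambda/\mu}$ with $h^\mu_*(\mu_u)\,h^*_\mu(\mu_b)\,h^*_\lambda(\lambda_u)\,h^\lambda_*(\lambda_b)$. A column disjoint from $\lambda/\mu$ has the same squares in $\mu$ and in $\lambda$, so $\mu_u$ and $\lambda_u$ coincide as sets of cells; factoring $h^\mu_*(\mu)=h^\mu_*(\mu_u)h^\mu_*(\mu_b)$ and $h^*_\lambda(\lambda)=h^*_\lambda(\lambda_u)h^*_\lambda(\lambda_b)$ and cancelling the un-bottomed parts against the target, the claim becomes the purely combinatorial identity
\[
\psi'_{\lambda/\mu}(\alpha)=\frac{h^*_\mu(\mu_b)}{h^\mu_*(\mu_b)}\cdot\frac{h^\lambda_*(\lambda_b)}{h^*_\lambda(\lambda_b)},
\]
a product over bottomed cells only. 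This is exactly Macdonald's product expression for $\psi'_{\lambda/\mu}$ after it is rewritten in the bottomed/un-bottomed language, since that expression is indexed by the columns meeting $\lambda/\mu$. I expect this final rearrangement to be the main technical obstacle: one must line up Macdonald's ``columns minus rows'' index set with $\mu_b,\mu_u,\lambda_b,\lambda_u$ and track precisely which arm- and leg-lengths change when the horizontal strip is added (in particular, $h^\mu_*$ and $h^*_\lambda$ differ on a cell even when it lies in an unchanged column). Everything preceding this step is formal manipulation of normalizations. A more self-contained alternative would be to derive the $Q_n$-Pieri rule itself from the Cauchy identity $\sum_\lambda P_\lambda(x)Q_\lambda(y)=\prod_{i,j}(1-x_iy_j)^{-1/\alpha}$ by extracting coefficients, but this still requires the triangularity-plus-orthogonality characterization of $P_\lambda$ and, in the end, the same hook-length evaluation of $\psi'_{\lambda/\mu}$.
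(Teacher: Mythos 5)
This statement is imported: the paper cites it to Stanley \cite{S} and gives no proof of its own, so there is nothing internal to compare your argument against; what follows is an assessment of your proposal on its own terms. Your reduction is correct. The normalization bookkeeping ($J_\lambda=h^\lambda_*(\lambda)P_\lambda=h^*_\lambda(\lambda)Q_\lambda$, hence $\langle P_\lambda,P_\lambda\rangle=h^*_\lambda(\lambda)/h^\lambda_*(\lambda)$ and $J_n=\alpha^n n!\,Q_n=h^*_n(n)Q_n$) matches Lemma \ref{L:triangular}, and it correctly yields $\langle J_nJ_\mu,J_\lambda\rangle=h^*_n(n)h^\mu_*(\mu)h^*_\lambda(\lambda)\,\psi'_{\lambda/\mu}$ with your Pieri coefficient. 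The step you flag as ``the main technical obstacle'' is in fact immediate and needs no further arm/leg tracking: the coefficient you call $\psi'_{\lambda/\mu}$ is, in Macdonald's notation, $\varphi_{\lambda/\mu}$ (his $\psi'$ is the vertical-strip coefficient for $e_r$), and his product formula reads $\varphi_{\lambda/\mu}=\prod_{s\in C_{\lambda/\mu}}b_\lambda(s)/b_\mu(s)$ with $b_\kappa(s)=h^\kappa_*(s)/h^*_\kappa(s)$ in the Jack specialization and the convention $b_\mu(s)=1$ for $s\notin\mu$. Since $C_{\lambda/\mu}\cap\lambda=\lambda_b$ and $C_{\lambda/\mu}\cap\mu=\mu_b$ by Definition \ref{bottomnotation}, this is verbatim your target identity
\[
\psi'_{\lambda/\mu}(\alpha)=\frac{h^*_\mu(\mu_b)}{h^\mu_*(\mu_b)}\cdot\frac{h^\lambda_*(\lambda_b)}{h^*_\lambda(\lambda_b)},
\]
and the cancellation of the un-bottomed factors on both sides goes through exactly as you describe (the observation that $\mu_u$ and $\lambda_u$ coincide as cell sets is true but not even needed, since the cancellation happens separately within the $\mu$- and $\lambda$-factors). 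One caveat: your argument is only as self-contained as Macdonald's Pieri formula, which is itself a nontrivial theorem; Stanley's original proof establishes the Pieri rule directly, so your route trades his combinatorial work for a citation rather than eliminating it. Also, your positivity argument for the ``only if'' direction (each $b$-factor is positive for real $\alpha>0$) is fine, but you should note it shows nonvanishing of a rational function by evaluation, which is legitimate but worth one explicit sentence.
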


We also list some useful properties of $q_{\lambda}$ and $J_\lambda$
as follows.
\begin{lemma} \cite{S}\label{L:triangular}
For any partition $\lambda$, $\nu$, one has
\begin{align*}
&\langle q_{\la}(\alpha),m_{\nu}\rangle=\delta_{\la\nu},\\
&q_{\lambda}(\alpha)=Q_{\la}(\alpha)+\sum_{\mu>\lambda}c'_{\lambda\mu}Q_{\mu}(\alpha),\\
&J_\lambda(\al)=h_\la^*(\la)Q_\la(\al),\\
&J_\lambda(\al)=h^\la_*(\la)P_\la(\al),
\end{align*}
where 
$c'_{\lambda\mu}\in F$.
\end{lemma}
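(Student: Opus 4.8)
The plan is to establish the four identities in sequence, using the duality between the bases $\{q_\lambda\}$ and $\{m_\nu\}$ as the anchor and then propagating it to the other statements. First I would prove $\langle q_\lambda(\alpha), m_\nu\rangle = \delta_{\lambda\nu}$. Since $q_\lambda = Q_{\lambda_1}\cdots Q_{\lambda_l}$ and $Q_n(\alpha)$ is the coefficient of $z^n$ in $Y(z)=\exp\bigl(\sum_{k\geq 1}\frac{z^k}{k\alpha}p_k\bigr)$, one computes directly from \eqref{E:homogeneous1} that $\langle Q_n, m_\nu\rangle$ picks out exactly the relevant term: expanding $Q_n = \sum_{\rho\vdash n}\alpha^{-l(\rho)}z_\rho^{-1}p_\rho$ and pairing against $m_\nu$ using $\langle p_\rho, p_\sigma\rangle = \delta_{\rho\sigma}\alpha^{l(\rho)}z_\rho$ together with the standard expansion of $m_\nu$ in the power-sum basis, the $\alpha$-powers and $z_\rho$-factors cancel so that $\langle q_\lambda, m_\nu\rangle$ counts the coefficient of $m_\nu$-type monomials, giving $\delta_{\lambda\nu}$. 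This is essentially the statement that $\{q_\lambda\}$ and $\{m_\nu\}$ are dual bases under the Jack scalar product, and it is purely a bookkeeping computation with the vertex operator $Y(z)$.

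Next, for the triangularity $q_\lambda(\alpha) = Q_\lambda(\alpha) + \sum_{\mu>\lambda} c'_{\lambda\mu} Q_\mu(\alpha)$: I would argue that both $\{q_\lambda\}$ and $\{Q_\lambda\}$ are bases, and that the change of basis is unitriangular with respect to dominance order. One way is to dualize: since $\langle q_\lambda, m_\nu\rangle = \delta_{\lambda\nu}$ and $\langle Q_\lambda, P_\mu\rangle = \delta_{\lambda\mu}$, the transition matrix between $\{q_\lambda\}$ and $\{Q_\lambda\}$ is the transpose-inverse of the transition matrix between $\{m_\nu\}$ and $\{P_\mu\}$; the latter is upper unitriangular in dominance order by the defining property $P_\lambda = \sum_{\mu\leq\lambda} c_{\lambda\mu} m_\mu$ with $c_{\lambda\lambda}=1$, so the former is upper unitriangular as well, which is exactly the claimed formula. (Alternatively, one uses that $Q_n$ is already the "one-row" case and multiplies.)

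Finally, the two normalization identities $J_\lambda(\alpha) = h^*_\lambda(\lambda) Q_\lambda(\alpha)$ and $J_\lambda(\alpha) = h_*^\lambda(\lambda) P_\lambda(\alpha)$ follow from comparing coefficients. By definition $J_\lambda = \sum_{\nu\leq\lambda} v_{\lambda\nu} m_\nu$ with $v_{\lambda,(1^{|\lambda|})} = |\lambda|!$, while $Q_\lambda = \langle P_\lambda,P_\lambda\rangle^{-1} P_\lambda$ and $P_\lambda = \sum_{\mu\leq\lambda} c_{\lambda\mu} m_\mu$. Since both $J_\lambda$ and $P_\lambda$ (hence $Q_\lambda$) are scalar multiples of the same Jack function, it suffices to match one coefficient; the coefficient of $m_{(1^{n})}$ in $P_\lambda$ is a known product-hook expression, and matching it against $v_{\lambda,(1^n)} = n!$ yields $J_\lambda = h_*^\lambda(\lambda) P_\lambda$, from which $J_\lambda = h_\lambda^*(\lambda) Q_\lambda$ follows using $\langle P_\lambda, P_\lambda\rangle = h_*^\lambda(\lambda)/h^*_\lambda(\lambda)$. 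The main obstacle is really the first step: one must set up the pairing $Y(z)$ carefully enough that the cancellation of $\alpha$-powers against $z_\rho$-factors is transparent; once duality of $\{q_\lambda\}$ and $\{m_\nu\}$ is in hand, the rest is linear algebra plus Stanley's known evaluation of the bottom coefficient of a Jack polynomial.
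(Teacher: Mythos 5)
The paper offers no proof of this lemma: it is imported wholesale from Stanley \cite{S} (cf.\ Macdonald, Ch.~VI.10), so there is no internal argument to compare yours against. Judged on its own terms, your outline is sound for the first two identities. The duality $\langle q_{\la}(\al),m_{\nu}\rangle=\delta_{\la\nu}$ is most cleanly obtained not by chasing cancellations of $\al$-powers but from the reproducing-kernel characterization: $\prod_{i,j}(1-x_iy_j)^{-1/\al}=\sum_{\rho}\al^{-l(\rho)}z_{\rho}^{-1}p_{\rho}(x)p_{\rho}(y)=\sum_{\la}q_{\la}(x)m_{\la}(y)$, and two homogeneous bases are dual precisely when their products sum to this kernel; your ``bookkeeping'' computation is a less transparent version of the same fact and, as written, is only a gesture toward it. Your transpose argument for the unitriangularity is correct and complete in outline: if $P_{\mu}=\sum_{\nu\leq\mu}B_{\mu\nu}m_{\nu}$ with $B_{\mu\mu}=1$, duality of the pairs $(q,m)$ and $(Q,P)$ forces $q_{\la}=\sum_{\mu\geq\la}B_{\mu\la}Q_{\mu}$, which is exactly the second identity.

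The last two identities are where the real content sits, and there you defer to ``known'' evaluations; that is consistent with the paper's own citation of \cite{S}, but one constant you quote is wrong. From $Q_{\la}=\langle P_{\la},P_{\la}\rangle^{-1}P_{\la}$ and the two asserted identities one must have $\langle P_{\la},P_{\la}\rangle=h^*_{\la}(\la)/h_*^{\la}(\la)$ (upper over lower); you wrote the reciprocal $h_*^{\la}(\la)/h^*_{\la}(\la)$, which is the value of $b_{\la}=\langle P_{\la},P_{\la}\rangle^{-1}$, and with your value the passage from $J_{\la}=h_*^{\la}(\la)P_{\la}$ to $J_{\la}=h^*_{\la}(\la)Q_{\la}$ yields the wrong scalar. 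Note also that ``the coefficient of $m_{(1^{n})}$ in $P_{\la}$ is a known product-hook expression'' and the norm formula \emph{are} the Stanley theorems being cited here, so for these two lines your proposal is a restatement rather than a proof --- acceptable only because the lemma itself is a citation.
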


Knop and Sahi proved that (see also \cite {HHL}).
\begin{theorem}\cite{KS}\label{T:KSMac}
Let
$J_\lambda(\alpha)=\sum_{\mu\leq\lambda}v_{\lambda\nu}(\alpha)m_\mu,$
then we have $\frac{v_{\la\mu}(\alpha)}{m(\mu)!}\in
\mathbb{Z}_{\geq0}[\al].$
\end{theorem}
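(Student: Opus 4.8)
\medskip\noindent
The statement packages two claims: that $m(\mu)!$ divides $v_{\la\mu}(\al)$ in $\mathbb Z[\al]$, and that the quotient has \emph{nonnegative} coefficients. I would first dispose of the two extreme coefficients, where everything is explicit. For $\mu=(1^{|\la|})$ the assertion is the normalization itself, $v_{\la,(1^{|\la|})}=|\la|!=m((1^{|\la|}))!$, so the quotient is $1$. For $\mu=\la$, Lemma \ref{L:triangular} gives $J_\la=h_*^\la(\la)P_\la$ with $P_\la=m_\la+\sum_{\mu<\la}c_{\la\mu}m_\mu$, whence $v_{\la\la}(\al)=h_*^\la(\la)=\prod_{(i,j)\in\la}\bigl(\al(\la_i-j)+(\la'_j-i+1)\bigr)$, a product of factors lying in $\mathbb Z_{\ge0}[\al]$. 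The cells with $\la_i-j=0$ are exactly the right ends of the rows; the rows of a fixed length $k$ form a consecutive block of $m_k(\la)$ rows, and along this block the corresponding factor $\la'_k-i+1$ runs through $m_k(\la),\dots,2,1$, so the product over all such cells is $\prod_k m_k(\la)!=m(\la)!$. The remaining factors are of the form $\al a+b$ with $a,b$ positive integers, so $v_{\la\la}(\al)/m(\la)!\in\mathbb Z_{\ge0}[\al]$.

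For the remaining $\mu<\la$ I would induct on $|\la|$ using the Pieri rule of Theorem \ref{T:Stanley} with $n=1$ and the multiplier $J_1=p_1$. Since the $J_\nu$ are pairwise orthogonal with $\langle J_\la,J_\la\rangle=h_*^\la(\la)h^*_\la(\la)$ (combine $J_\la=h^*_\la(\la)Q_\la=h_*^\la(\la)P_\la$ with $\langle Q_\la,P_\la\rangle=1$), for each $\mu\vdash|\la|-1$,
\[
p_1\,J_\mu(\al)=\sum_{\la=\mu+\square}g^\la_\mu(\al)\,J_\la(\al),\qquad
g^\la_\mu(\al)=\frac{\langle J_1J_\mu,J_\la\rangle}{h_*^\la(\la)h^*_\la(\la)},
\]
with numerator the explicit product of linear-in-$\al$ hook factors furnished by Theorem \ref{T:Stanley}. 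On the other side $p_1m_\nu$ expands in the $m$-basis with explicit nonnegative integer coefficients (a box is added to $\nu$, with multiplicity of the new part). Matching the coefficient of each $m_\kappa$ as $\mu$ runs over all one-box predecessors of the partitions of $|\la|$ yields a linear system determining the $v_{\la\kappa}(\al)$; tracking the quotients $v_{\la\kappa}/m(\kappa)!$ through it and checking that the hook denominators cancel at each stage gives $v_{\la\kappa}(\al)/m(\kappa)!\in\mathbb Z[\al]$.

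The real obstacle is \emph{positivity}: the recursion above contains subtractions, so it does not by itself force nonnegative coefficients. Here I would instead invoke a manifestly positive combinatorial expansion $J_\la(\al)=\sum_{T}\mathrm{wt}_T(\al)\,x^{\mathrm{content}(T)}$, the sum over fillings $T$ of the diagram of $\la$, with each $\mathrm{wt}_T(\al)$ a product of factors $\al a+b$, $a\ge0$, $b\ge1$ --- this is precisely the combinatorial formula for Jack functions obtained later in this paper, and alternatively the Haglund--Haiman--Loehr and Knop--Sahi formulas \cite{HHL,KS}. Then $v_{\la\mu}(\al)=\sum_{\mathrm{content}(T)=\mu}\mathrm{wt}_T(\al)$ is visibly in $\mathbb Z_{\ge0}[\al]$, and the divisibility by $m(\mu)!$ should follow from a symmetrization over the fillings of content $\mu$ --- e.g.\ by exhibiting a weight-preserving free action of $\mathfrak S_{m_1(\mu)}\times\mathfrak S_{m_2(\mu)}\times\cdots$ on them, forcing all orbits to have the full size $m(\mu)!$. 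Making this last point precise is, I expect, where the genuine difficulty lies; the hook bookkeeping of the inductive step and the two extreme coefficients above are routine by comparison.
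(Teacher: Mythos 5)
The paper does not prove this statement at all: it is quoted verbatim from Knop and Sahi \cite{KS}, so there is no internal argument to compare yours against. Judged on its own terms, your proposal has a genuine gap precisely where you admit the difficulty lies, and it is not a small one. The positivity step rests on ``a manifestly positive combinatorial expansion'' of $J_\la$ as a sum over fillings with weights in $\mathbb Z_{\ge0}[\al]$ --- but that expansion \emph{is} the main theorem of \cite{KS}, i.e.\ essentially the content of the result you are asked to prove; invoking it is circular rather than a reduction. Moreover, your identification of this positive formula with ``the combinatorial formula obtained later in this paper'' is incorrect: Theorem \ref{T:combinatorial formula} expresses $v_{\la\mu}$ as $v_{\la\la}\sum_\delta f^\la(\delta)$, where each $f^\la(\delta)$ carries denominators $e'_\la(\al)-e'_{\la^i}(\al)=a\al+b$ with $a,b>0$, so it is a sum of rational functions with no visible positivity or even integrality; indeed the paper runs the logic in the opposite direction, importing Theorem \ref{T:KSMac} from \cite{KS} in order to draw integrality conclusions about that sum (see the corollary immediately following Theorem \ref{T:combinatorial formula}).

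The other two steps are also thinner than presented. In the Pieri induction, the coefficients $g^\la_\mu(\al)=\langle J_1J_\mu,J_\la\rangle/\bigl(h_*^\la(\la)h^*_\la(\la)\bigr)$ are genuinely rational in $\al$, and ``checking that the hook denominators cancel at each stage'' is exactly the hard part of any such argument: nothing in your setup forces the cancellation, and the resulting linear system mixes terms with signs, so even the weaker claim $v_{\la\mu}\in\mathbb Z[\al]$ does not come out of this sketch. Likewise the $m(\mu)!$-divisibility via a weight-preserving free action of $\prod_k\mathfrak S_{m_k(\mu)}$ on fillings of content $\mu$ is asserted, not constructed; this is the second place where the actual content of \cite{KS} (or of \cite{HHL}) would have to be reproduced. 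Your treatment of the two extreme coefficients, including the observation that the row-end factors of $h_*^\la(\la)$ multiply to exactly $m(\la)!$, is correct, but the theorem as a whole is not established by the proposal.
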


 In \cite{EJ}, some transition matrices were given using
combinatorial methods, the following is a special case of such
matrices.

\begin{proposition}\cite{EJ}\label{T:ptomu}
For $\mu\vdash n$, set
$m_\mu=\sum_{\lambda\geq\mu}T_{\mu\lambda}p_\lambda$, then we have
$T_{\mu\lambda}m(\mu)!\in\mathbb{Z}$ and
$T_{\mu,(n)}=(-1)^{l(\mu)-1}\frac{(l(\mu)-1)!}{m(\mu)!}$.
\end{proposition}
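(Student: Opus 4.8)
The plan is to treat the two assertions separately: first the exact value of $T_{\mu,(n)}$, which one can read off from the Jack scalar product, and then the integrality statement, which is $\alpha$-free and is most cleanly obtained through the augmented monomial symmetric functions.

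\emph{The value of $T_{\mu,(n)}$.} Since $\langle p_{\lambda},p_{\rho}\rangle=\delta_{\lambda\rho}\alpha^{l(\lambda)}z_{\lambda}$, pairing $m_{\mu}=\sum_{\lambda\geq\mu}T_{\mu\lambda}p_{\lambda}$ with $p_{n}$ gives $T_{\mu,(n)}=\langle m_{\mu},p_{n}\rangle/(n\alpha)$, so it suffices to compute $\langle m_{\mu},p_{n}\rangle$. I would expand $p_{n}$ in the basis $\{q_{\rho}\}$, which is dual to $\{m_{\rho}\}$ by Lemma~\ref{L:triangular}. Taking $\log$ of $Y(z)=\exp\!\big(\sum_{k\geq1}\tfrac{z^{k}}{k\alpha}p_{k}\big)=1+\sum_{m\geq1}Q_{m}z^{m}$ yields $\sum_{k\geq1}\tfrac{z^{k}}{k\alpha}p_{k}=\sum_{r\geq1}\tfrac{(-1)^{r-1}}{r}\sum_{m_{1},\dots,m_{r}\geq1}Q_{m_{1}}\cdots Q_{m_{r}}z^{m_{1}+\cdots+m_{r}}$, and collecting $Q_{m_{1}}\cdots Q_{m_{r}}=q_{\rho}$ according to the underlying partition $\rho$ (each $\rho$ arising from $l(\rho)!/m(\rho)!$ orderings of its parts) gives $p_{n}=n\alpha\sum_{\rho\vdash n}(-1)^{l(\rho)-1}\tfrac{(l(\rho)-1)!}{m(\rho)!}\,q_{\rho}$. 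Pairing with $m_{\mu}$ and using $\langle q_{\rho},m_{\mu}\rangle=\delta_{\rho\mu}$ picks out the single term $\rho=\mu$ (here $\mu\vdash n$), so $\langle m_{\mu},p_{n}\rangle=n\alpha(-1)^{l(\mu)-1}(l(\mu)-1)!/m(\mu)!$, which gives the claimed formula for $T_{\mu,(n)}$.

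\emph{Integrality.} I would introduce the augmented monomials $\widetilde m_{\lambda}:=m(\lambda)!\,m_{\lambda}$ and prove that the transition matrix from $\{p_{\lambda}\}$ to $\{\widetilde m_{\nu}\}$ is unitriangular for the dominance order with entries in $\mathbb Z_{\geq0}$. Writing $p_{\lambda}=\prod_{i=1}^{l(\lambda)}m_{(\lambda_{i})}$ and expanding, the coefficient of a fixed monomial $x^{\beta}$ with $\beta$ of shape $\nu$ equals the number of functions $f\colon\{1,\dots,l(\lambda)\}\to\mathbb Z_{>0}$ with $\prod_{i}x_{f(i)}^{\lambda_{i}}=x^{\beta}$. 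The group $\prod_{k}S_{m_{k}(\nu)}$ permuting the equal entries of $\beta$ acts on this set, and freely (a nontrivial element would have to fix $\mathrm{im}(f)$ pointwise), so the count is divisible by $m(\nu)!$; hence the coefficient of $\widetilde m_{\nu}$ in $p_{\lambda}$ is a nonnegative integer. When $\nu=\lambda$ the count is exactly $m(\lambda)!$, so that coefficient is $1$; for every other $\nu$ that occurs one has $l(\nu)<l(\lambda)$, so $\nu$ arises from $\lambda$ by merging parts and therefore $\nu>\lambda$. Thus $p_{\lambda}=\widetilde m_{\lambda}+\sum_{\nu>\lambda}b_{\lambda\nu}\widetilde m_{\nu}$ with $b_{\lambda\nu}\in\mathbb Z_{\geq0}$. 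Since the inverse of an integral unitriangular matrix is again integral and unitriangular, $\widetilde m_{\mu}=p_{\mu}+\sum_{\nu>\mu}b'_{\mu\nu}p_{\nu}$ with $b'_{\mu\nu}\in\mathbb Z$; dividing by $m(\mu)!$ gives $m_{\mu}=\sum_{\nu\geq\mu}\tfrac{b'_{\mu\nu}}{m(\mu)!}p_{\nu}$, i.e. $T_{\mu\nu}=b'_{\mu\nu}/m(\mu)!$, which simultaneously yields $m(\mu)!\,T_{\mu\nu}\in\mathbb Z$ and $T_{\mu\nu}=0$ unless $\nu\geq\mu$.

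\emph{Expected obstacle.} The first part is essentially bookkeeping with $Y(z)$ and the pairing $\langle q_{\rho},m_{\mu}\rangle=\delta_{\rho\mu}$. The delicate point is the second part: one must verify carefully that $\prod_{k}S_{m_{k}(\nu)}$ genuinely acts on the solution set of $\prod_{i}x_{f(i)}^{\lambda_{i}}=x^{\beta}$ and that this action is free, and that replacing $\lambda$ by the shape of $x^{f}$ (merging parts) strictly raises the dominance order whenever $f$ is not injective; these are the two places where the argument could slip.
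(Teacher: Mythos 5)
Your proof is correct, but the comparison here is necessarily with the cited source rather than with the paper: Proposition~\ref{T:ptomu} is quoted from \cite{EJ} without proof, and in that reference both claims come out of the brick-tabloid description of the transition matrices between bases of symmetric functions. Your route is genuinely different and is self-contained within the tools this paper already sets up. For $T_{\mu,(n)}$ you take the logarithm of $Y(z)$ to get $p_n=n\alpha\sum_{\rho\vdash n}(-1)^{l(\rho)-1}\frac{(l(\rho)-1)!}{m(\rho)!}q_\rho$ and then pair against $m_\mu$ using $\langle q_\rho,m_\mu\rangle=\delta_{\rho\mu}$; note that this is exactly the content of the paper's Lemma~\ref{L:pnasqlambda}, which the paper \emph{deduces from} Proposition~\ref{T:ptomu}, so you have in effect reversed the logical order --- your derivation of the $q$-expansion is independent (pure generating-function bookkeeping plus the duality from Lemma~\ref{L:triangular}), so there is no circularity, and the $\alpha$'s cancel as they must since $T_{\mu\lambda}$ is $\alpha$-free. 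For the integrality you give the classical augmented-monomial argument; the two points you flag as delicate both check out: the stabilizer of a solution $f$ of $\prod_i x_{f(i)}^{\lambda_i}=x^\beta$ inside $\prod_k S_{m_k(\nu)}$ must fix $\mathrm{im}(f)$ pointwise, and $\mathrm{im}(f)$ is the \emph{entire} support of $\beta$ (every $j$ with $\beta_j>0$ is hit since all $\lambda_i>0$), so the action is free; and when $f$ is not injective the shape $\nu$ of $x^\beta$ is a coarsening of $\lambda$ by merging parts, which satisfies $\nu\geq\lambda$ (greedily choose, for each $r$, blocks covering $\{1,\dots,r\}$) and $\nu\neq\lambda$ because $l(\nu)<l(\lambda)$. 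As a bonus your second argument also establishes the triangularity $T_{\mu\lambda}=0$ unless $\lambda\geq\mu$, which the statement of the proposition takes for granted. What the brick-tabloid approach buys is a single uniform combinatorial model for all entries $T_{\mu\lambda}$; what yours buys is brevity and the fact that it uses only objects already present in this paper.
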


Combining Theorem \ref{T:KSMac} with Proposition \ref{T:ptomu}, it
is easy to see one of Stanley's conjectures:
\begin{theorem} \cite{S}
Set $J_\la(\al)=\sum_{\mu}c_{\la\mu}(\al)p_\mu$, then we have
$$c_{\la\mu}(\al)\in\mathbb{Z}[\al].$$
\end{theorem}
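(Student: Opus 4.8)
The plan is to combine the two integrality statements that have just been recorded. By Theorem \ref{T:KSMac}, writing $J_\lambda(\alpha)=\sum_{\mu\leq\lambda}v_{\lambda\mu}(\alpha)m_\mu$, we have $v_{\lambda\mu}(\alpha)/m(\mu)!\in\mathbb{Z}_{\geq 0}[\alpha]$; in particular $v_{\lambda\mu}(\alpha)\in m(\mu)!\,\mathbb{Z}[\alpha]$. On the other hand, Proposition \ref{T:ptomu} expands each monomial symmetric function in power sums, $m_\mu=\sum_{\lambda\geq\mu}T_{\mu\lambda}p_\lambda$, with the integrality property $T_{\mu\lambda}m(\mu)!\in\mathbb{Z}$. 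The idea is simply to substitute the second expansion into the first.

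First I would write
\[
J_\lambda(\alpha)=\sum_{\mu}v_{\lambda\mu}(\alpha)m_\mu
=\sum_{\mu}v_{\lambda\mu}(\alpha)\sum_{\nu\geq\mu}T_{\mu\nu}p_\nu
=\sum_{\nu}\Big(\sum_{\mu\leq\nu}v_{\lambda\mu}(\alpha)T_{\mu\nu}\Big)p_\nu,
\]
so that $c_{\lambda\nu}(\alpha)=\sum_{\mu\leq\nu}v_{\lambda\mu}(\alpha)T_{\mu\nu}$ (the sums being finite since all partitions involved have weight $|\lambda|$). Then I would check that each term $v_{\lambda\mu}(\alpha)T_{\mu\nu}$ lies in $\mathbb{Z}[\alpha]$: by Theorem \ref{T:KSMac} we can write $v_{\lambda\mu}(\alpha)=m(\mu)!\,w_{\lambda\mu}(\alpha)$ with $w_{\lambda\mu}(\alpha)\in\mathbb{Z}[\alpha]$, and then $v_{\lambda\mu}(\alpha)T_{\mu\nu}=w_{\lambda\mu}(\alpha)\cdot\big(m(\mu)!\,T_{\mu\nu}\big)$, which is a product of an element of $\mathbb{Z}[\alpha]$ with an integer, hence lies in $\mathbb{Z}[\alpha]$. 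Since $\mathbb{Z}[\alpha]$ is closed under finite sums, $c_{\lambda\nu}(\alpha)\in\mathbb{Z}[\alpha]$, which is the claim.

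Honestly there is no serious obstacle here; the statement is flagged in the text as an easy consequence, and the only points that need care are bookkeeping ones: making sure the double sum rearrangement is legitimate (it is, because everything is supported on the finitely many partitions of $|\lambda|$), and using the integrality of $v_{\lambda\mu}$ in the \emph{divisibility} form $m(\mu)!\mid v_{\lambda\mu}$ rather than just $v_{\lambda\mu}\in\mathbb{Z}[\alpha]$, since it is precisely the factor $m(\mu)!$ that cancels the denominator $m(\mu)!$ appearing in $T_{\mu\nu}$ via Proposition \ref{T:ptomu}. One could also remark that this matches the known leading coefficient: taking $\nu=(n)$ gives $c_{\lambda,(n)}(\alpha)=\sum_{\mu}v_{\lambda\mu}(\alpha)T_{\mu,(n)}=\sum_{\mu}v_{\lambda\mu}(\alpha)(-1)^{l(\mu)-1}(l(\mu)-1)!/m(\mu)!$, again manifestly in $\mathbb{Z}[\alpha]$, consistent with the general statement.
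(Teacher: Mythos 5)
Your argument is correct and is exactly the combination the paper intends: it fills in the details of the paper's one-line remark that the result follows by combining Theorem \ref{T:KSMac} with Proposition \ref{T:ptomu}, with the key cancellation $v_{\lambda\mu}T_{\mu\nu}=\bigl(v_{\lambda\mu}/m(\mu)!\bigr)\cdot\bigl(m(\mu)!\,T_{\mu\nu}\bigr)$ identified correctly. No issues.
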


A direct consequence of this is
\begin{corollary}
$$C_{\mu\nu}^\la(\al)=\langle J_\mu(\alpha)
J_\nu(\alpha),J_\lambda(\alpha)\rangle\in\mathbb{Z}[\al].$$
\end{corollary}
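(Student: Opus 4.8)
The plan is to reduce everything to the power-sum basis, where integrality is already available from the preceding theorem (Stanley's conjecture), and where the Jack scalar product is given by the explicit formula \eqref{def}. First I would write, using $J_\mu(\al)=\sum_\sigma c_{\mu\sigma}(\al)p_\sigma$ and $J_\nu(\al)=\sum_\tau c_{\nu\tau}(\al)p_\tau$ with all $c_{\mu\sigma}(\al),c_{\nu\tau}(\al)\in\mathbb{Z}[\al]$, the product
$$J_\mu(\al)J_\nu(\al)=\sum_{\sigma,\tau}c_{\mu\sigma}(\al)c_{\nu\tau}(\al)\,p_{\sigma\cup\tau},$$
where $\sigma\cup\tau$ denotes the partition obtained by merging the parts of $\sigma$ and $\tau$, so that $p_\sigma p_\tau=p_{\sigma\cup\tau}$ and $l(\sigma\cup\tau)=l(\sigma)+l(\tau)$.

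Next I would expand $J_\lambda(\al)=\sum_\rho c_{\lambda\rho}(\al)p_\rho$, again with $c_{\lambda\rho}(\al)\in\mathbb{Z}[\al]$, and pair the two expressions by bilinearity of $\langle\,,\,\rangle$ together with $\langle p_\rho,p_{\sigma\cup\tau}\rangle=\delta_{\rho,\sigma\cup\tau}\,\al^{l(\sigma\cup\tau)}z_{\sigma\cup\tau}$. This yields
$$C_{\mu\nu}^\la(\al)=\sum_{\sigma,\tau}c_{\mu\sigma}(\al)\,c_{\nu\tau}(\al)\,c_{\lambda,\sigma\cup\tau}(\al)\,\al^{\,l(\sigma)+l(\tau)}\,z_{\sigma\cup\tau},$$
a finite sum (by homogeneity only $\sigma,\tau$ with $|\sigma|+|\tau|=|\lambda|$ contribute, and the sum is empty unless $|\mu|+|\nu|=|\lambda|$). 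Each summand is a product of elements of $\mathbb{Z}[\al]$: the three transition coefficients lie in $\mathbb{Z}[\al]$ by the quoted theorem, $\al^{l(\sigma)+l(\tau)}$ is a monomial, and $z_{\sigma\cup\tau}$ is a positive integer. Hence the whole expression lies in $\mathbb{Z}[\al]$, which is the claim.

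There is essentially no serious obstacle here; the only point requiring a line of care is the bookkeeping that $p_\sigma p_\tau=p_{\sigma\cup\tau}$ with the additivity of lengths, so that the scalar-product formula \eqref{def} applies verbatim and the factor $\al^{l(\sigma)+l(\tau)}z_{\sigma\cup\tau}$ is genuinely polynomial in $\al$ with integer coefficients rather than merely rational. Everything else is formal manipulation with the bilinear form.
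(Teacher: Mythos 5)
Your proposal is correct and is exactly the argument the paper has in mind: the corollary is stated as a ``direct consequence'' of the preceding theorem ($c_{\la\mu}(\al)\in\mathbb{Z}[\al]$), and your expansion of all three $J$'s in the power-sum basis, combined with $p_\sigma p_\tau=p_{\sigma\cup\tau}$ and the scalar product $\langle p_\la,p_\mu\rangle=\delta_{\la\mu}\al^{l(\la)}z_\la$, is precisely the computation being left to the reader. No gaps.
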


\section{Laplace-Beltrami type operator for
Jack functions}
%

$\Lambda_F$ is a graded ring with gradation given by the degree, and
let $\Lambda_F(m)=\{f\in\Lambda_F|deg(f)=m\}$, thus
 $\Lambda_F=\oplus_{n=0}^{\infty}\Lambda_F(n)$. A linear operator
 $A$ is called a graded operator of degree $n$ if $A.\Lambda_F(m)\subset \Lambda_F(m+n)$.

 The Heisenberg algebra $H_\alpha$ is an infinite dimensional Lie algebra generated by
 $h_n$ ($n\neq 0$),
satisfying the relations:
\begin{align*}
[h_m, h_n]=m\alpha\delta_{m, -n}.
\end{align*}
For $n>0$, identifying $h_{-n}$ with $p_n$ in $\Lambda_F$, we have
the canonical representation of $H_\alpha$ on $\Lambda_F$ defined
by:
\begin{align*}
&h_n.v=n\alpha\frac{\partial}{\partial h_{-n}}(v),\\
&h_{-n}.v=h_{-n}v,
\end{align*}
for $n>0$ and $v\in\Lambda$.

The operator $h_n$ is then a graded
operator of degree $-n$.

On the ring $\Lambda_F$ of symmetric functions, we introduce the
following graded operator of degree $0$
$$
D(\alpha)=\sum_{i,j\geq 1}(i+j)\alpha
p_{i}p_{j}\frac{\partial}{\partial p_{i+j}}+\sum_{i,j\geq
1}ij\alpha^2 p_{i+j}\frac{\partial^2}{\partial p_{i}\partial
p_{j}}+\alpha(\alpha-1)\sum_{k\geq 1}k^2
p_{k}\frac{\partial}{\partial p_{k}},
$$
where the infinite sum is well-defined on each subspace
$\Lambda(m)$.

Using Heisenberg relations the operator $D(\alpha)$ can be rewritten
as
$$D(\alpha)=\sum_{i,j\geq 1}h_{-i}h_{-j}h_{i+j}+\sum_{i,j\geq
1}h_{-(i+j)}h_{i}h_{j}+(\alpha-1)\sum_{i\geq 1}ih_{-i}h_i.
$$
\begin{remark} A similar operator
was used in physics literature (eg. \cite{AMOS}, \cite{I}) to study
Virasoro constraints. When $\alpha=1$, the operator $D(1)$ was also
studied by \cite{FW} for the Schur case. We will see that the third
term is crucial in the Jack case.
\end{remark}

We now give a characterization of Jack functions using the Laplace-Beltrami operator.

\begin{theorem} \label{Dalphatheorem}
For $\la=(\la_1,\la_2,\cdots)\in \mathcal P$,
$Q'_{\lambda}(\alpha)=Q_{\lambda}(\alpha)$ if and only if the
following properties are satisfied:
\begin{align*}
&\mbox{(1)}~~\mbox{There are rational functions~}C_{\la\mu}(\al)
\mbox{~of~}\alpha\mbox{~with~}C_{\la\la}(\al)=1\mbox{~such that}\\
&
 \qquad\qquad\qquad\qquad Q'_\la(\al)=\sum_{\mu\geq\la}C_{\la\mu}(\al)q_\mu(\al),\\
&\mbox{(2)}~~Q'_\lambda(\alpha) \mbox{~is an eigenvector
for~}D(\alpha) .
\end{align*}
Moreover,
$D(\alpha).Q_\lambda(\alpha)=e_\lambda(\alpha)Q_\lambda(\alpha)$,
where
$$e_\lambda(\alpha)=\alpha^2\sum_i\lambda_i^2+\alpha(|\la|-2\sum_ii\lambda_i).$$
\end{theorem}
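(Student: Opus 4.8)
The plan is to establish the "if and only if" by first proving the moreover clause---that $Q_\lambda(\alpha)$ is an eigenvector of $D(\alpha)$ with the stated eigenvalue---and then using the strict triangularity in (1) together with distinctness of eigenvalues to pin down $Q'_\lambda$ uniquely. For the forward direction (eigenvector $\Rightarrow$ $Q'_\lambda = Q_\lambda$), assuming property (1), one writes $Q'_\lambda = \sum_{\mu\geq\lambda} C_{\lambda\mu} q_\mu$ and applies $D(\alpha)$; since the action of $D(\alpha)$ on the $q_\mu$ basis is upper-triangular with respect to dominance (this is what Section 3's explicit computation is meant to supply, and I would verify it directly from the rewritten form $D(\alpha) = \sum h_{-i}h_{-j}h_{i+j} + \sum h_{-(i+j)}h_i h_j + (\alpha-1)\sum_i i h_{-i}h_i$), the eigenvector equation forces the diagonal entry on $q_\lambda$ to be the eigenvalue, and then comparing coefficients of $q_\mu$ for $\mu > \lambda$ recursively determines all $C_{\lambda\mu}$ uniquely---hence $Q'_\lambda$ must coincide with $Q_\lambda$, which we separately know satisfies (1) and (2).

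The crux is therefore the explicit computation of $D(\alpha)$ acting on $q_\mu(\alpha)$, or equivalently on $m_\nu$ via Lemma~\ref{L:triangular}. I would compute $D(\alpha).Q_n(\alpha)$ first as a warm-up using the generating function $Y(z) = \exp(\sum_{n\geq 1} \frac{z^n}{n\alpha} p_n)$: since $Q_\mu = Q_{\mu_1}\cdots Q_{\mu_l}$, and $D(\alpha)$ is a second-order differential operator in the $p_k$, its action on a product $Y(z_1)\cdots Y(z_l)$ can be assembled from the action on each factor plus cross terms coming from the $\sum ij\alpha^2 p_{i+j}\partial_i\partial_j$ piece (these cross terms raise one index and lower two, which is exactly the mechanism producing the dominance-triangular off-diagonal contributions). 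The diagonal part comes from the terms that return $Y(z_1)\cdots Y(z_l)$ unchanged; collecting these should yield $\alpha^2\sum_i \mu_i^2 + \alpha(|\mu| - 2\sum_i i\mu_i)$, matching $e_\lambda(\alpha)$ when $\mu = \lambda$. It is worth sanity-checking this eigenvalue against Soko's finite formula (1.2): the $n$-dependent term $\frac12(n+1-2i)\lambda_i$ there has the $n$-independent remnant $-i\lambda_i$, and the $\frac{\alpha}{2}\lambda_i^2$ term matches up to the overall normalization, so after rescaling $D(\alpha) = 2L(\alpha) - (\text{something involving } n)$ the eigenvalues are consistent---this gives confidence the constant is right.

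The main obstacle I anticipate is bookkeeping the triangularity precisely: one must show that every $q_\mu$ appearing in $D(\alpha).q_\lambda$ with nonzero coefficient other than $q_\lambda$ itself has $\mu > \lambda$ strictly in dominance, and that the diagonal coefficient is exactly $e_\lambda(\alpha)$ with no contamination. The $p_i p_j \partial_{i+j}$ term lowers in dominance while $p_{i+j}\partial_i\partial_j$ raises, so a naive count suggests contributions on both sides; the resolution is that when acting on $q_\lambda$ (a product of \emph{dual} Jack functions $Q_{\lambda_i}$, each of which is itself a full symmetric function, not a single $p_k$), the lowering term's output re-expands with a leading term back at $\lambda$, and one needs Lemma~\ref{L:triangular}'s statement that $q_\lambda = Q_\lambda + \sum_{\mu > \lambda} c'_{\lambda\mu} Q_\mu$ to see that the net effect is upper-triangular in the $q$-basis. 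Once triangularity and the diagonal value are nailed down, distinctness of the $e_\lambda(\alpha)$ for distinct $\lambda$ of the same weight (which one checks by a short argument: if $e_\lambda = e_\mu$ as rational functions of $\alpha$ then $\sum \lambda_i^2 = \sum \mu_i^2$ and $\sum i\lambda_i = \sum i\mu_i$, and combined with $|\lambda| = |\mu|$ this is known to force $\lambda = \mu$ on partitions of a fixed weight) completes both directions simultaneously.
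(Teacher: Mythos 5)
Your computational core --- acting on the product $Q_{\lambda_1}(\alpha)\cdots Q_{\lambda_l}(\alpha)$ via the one-row case plus the cross terms of the second-order piece, with the expansion $p_n=\sum_\mu a_{n,\mu}q_\mu(\alpha)$ handling the multiplication by $p_{i+j}$ --- is essentially the paper's route (Lemmas \ref{L:pnasqlambda}, \ref{L:h_n.Q_m} and \ref{act-on-prod} feeding into Proposition \ref{Dalphaproposition}), and the cancellation you flag as the main bookkeeping obstacle is exactly the content of Lemma \ref{act-on-prod}, which you gesture at but do not carry out. The genuine gap is in how you close the argument. Establishing that $D(\alpha)$ acts upper-triangularly on the $q_\mu(\alpha)$ with diagonal entry $e_\mu(\alpha)$ yields only the existence and uniqueness of a triangular eigenbasis $Q'_\lambda=q_\lambda+\sum_{\mu>\lambda}C_{\lambda\mu}q_\mu$ (the paper's Proposition \ref{Q'}); it does not prove the ``moreover'' clause that the Jack function $Q_\lambda(\alpha)$ --- defined by orthogonality together with triangularity over the monomial basis --- is that eigenvector. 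You write that we ``separately know'' $Q_\lambda$ satisfies (2), but nothing in your proposal supplies this, and your plan to ``first prove the moreover clause'' reduces to the same unproved identification. The missing ingredient is the self-adjointness of $D(\alpha)$ with respect to the Jack inner product: this is what the paper's Lemma \ref{L:OHJAE} exploits, showing that a self-adjoint raising operator on the $q_\lambda$'s is also a raising operator on the $Q_\lambda$'s (via Lemma \ref{L:triangular}) and, by adjointness, simultaneously a lowering operator on the $P_\mu$'s, hence diagonal on the Jack basis. Without this step your $Q'_\lambda$ is merely some triangular eigenbasis, not provably the Jack one.

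A second, smaller error: the eigenvalue-separation fact you invoke is false as stated. It is not true that $\sum_i\lambda_i^2=\sum_i\mu_i^2$, $\sum_i i\lambda_i=\sum_i i\mu_i$ and $|\lambda|=|\mu|$ force $\lambda=\mu$: for $\lambda=(4,1,1)$ and $\mu=(3,3)$ these sums are $18$, $9$ and $6$ in both cases, so $e_{(4,1,1)}(\alpha)=e_{(3,3)}(\alpha)$. What saves the argument is that the recursion for $C_{\lambda\mu}(\alpha)$ only ever divides by $e_\lambda(\alpha)-e_\mu(\alpha)$ for dominance-comparable $\lambda<\mu$, and in that case $\sum_i\lambda_i^2<\sum_i\mu_i^2$ and $\sum_i i\lambda_i>\sum_i i\mu_i$ strictly (each box-raising move changes these by $2(\lambda_i-\lambda_j)+2>0$ and $i-j<0$ respectively), so the difference is a nonzero polynomial in $\alpha$. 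This is the paper's Lemma \ref{L:eigenvalue} and it is all that is needed, but your justification must be restricted to comparable pairs; the example above shows that separation over all partitions of a fixed weight simply fails.
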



\begin{definition}
For an operator $S$ on $\Lambda_F$, define the conjugate of $S$,
denoted as $S^*$, by $\langle S.u,v\rangle=\langle u,S^*.v\rangle$
for all $u,v\in\Lambda_F$. $S$ is called self-adjoint, if $S=S^*$.
Let $S$ be a degree $0$ graded operator on $\Lambda_F$, we say that
$S$ is a raising operator on $q_{\la}$'s if
$S.q_\lambda=\sum_{\mu\geq\lambda}C_{\lambda\mu}q_\mu$ for every
$\lambda\in\mathcal {P}$.
\end{definition}

It's easy to prove the following Lemma, mainly using Lemma
\ref{L:triangular}.
\begin{lemma} \label{L:OHJAE}
A graded operator $S$ on $\Lambda_F$ has Jack functions as its
eigenvectors if and only if $S$ is a raising operator on $q_\la$'s
and $S$ is self-adjoint.
\end{lemma}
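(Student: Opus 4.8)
The plan is to show the two implications separately, using only the triangularity and orthogonality built into Lemma~\ref{L:triangular} together with the defining properties of the Jack functions. First I would set up the bookkeeping: by Lemma~\ref{L:triangular} we have $\langle q_\la,m_\nu\rangle=\delta_{\la\nu}$ and $q_\la = Q_\la + \sum_{\mu>\la}c'_{\la\mu}Q_\mu$, so $\{q_\la\}$ and $\{Q_\la\}$ are related by a unitriangular matrix with respect to the reverse lexicographic (or dominance) order; dually, $\{m_\la\}$ is the basis adjoint to $\{q_\la\}$. Since $S$ is graded of degree $0$ it preserves each finite-dimensional space $\Lambda_F(n)$, so on each such space $S$ is an honest linear operator and all statements become finite-dimensional linear algebra.

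For the ``only if'' direction, suppose $S.P_\la = \varepsilon_\la P_\la$ for all $\la$, with $P_\la = \sum_{\mu\le\la}c_{\la\mu}m_\mu$ (here I write the triangularity in the $m$-basis; equivalently, since $Q_\la = \langle P_\la,P_\la\rangle^{-1}P_\la$, the same eigenvector statement holds for $Q_\la$). Self-adjointness is immediate: for $\la\ne\mu$ with, say, $|\la|=|\mu|$, $\langle S.P_\la,P_\mu\rangle = \varepsilon_\la\langle P_\la,P_\mu\rangle = 0 = \langle P_\la, S.P_\mu\rangle$, and the diagonal terms agree trivially; for distinct degrees everything pairs to zero because $S$ preserves degree and $\langle\cdot,\cdot\rangle$ is degree-graded. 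Hence $S=S^*$. To see $S$ is a raising operator on the $q_\la$'s, expand $q_\la = \sum_{\nu\ge\la}a_{\la\nu}Q_\nu$ (Lemma~\ref{L:triangular}, with $a_{\la\la}=1$), apply $S$, and get $S.q_\la = \sum_{\nu\ge\la}a_{\la\nu}\varepsilon_\nu Q_\nu$, which lies in $\mathrm{span}\{Q_\nu:\nu\ge\la\} = \mathrm{span}\{q_\nu:\nu\ge\la\}$ because the change of basis between $\{q\}$ and $\{Q\}$ is unitriangular for $\ge$. Therefore $S.q_\la = \sum_{\mu\ge\la}C_{\la\mu}q_\mu$ as required.

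For the ``if'' direction, assume $S=S^*$ and $S.q_\la = \sum_{\mu\ge\la}C_{\la\mu}q_\mu$ for all $\la$. Fix $n$ and a partition $\la\vdash n$; I would argue by induction on the order, say downward from the top partition $(n)$ in reverse lexicographic order (for which the raising condition forces $S.q_{(n)}=C_{(n)(n)}q_{(n)}$, so $q_{(n)}$ is already an eigenvector — and indeed $q_{(n)}=Q_{(n)}=Q_n$). Now take any $\la$ and the dual pairing: for $\mu\not\ge\la$ and $\mu\ne\la$, $\langle S.q_\la, m_\mu\rangle = \sum_{\nu\ge\la}C_{\la\nu}\langle q_\nu,m_\mu\rangle = 0$ since no $\nu\ge\la$ equals $\mu$; but $\langle S.q_\la,m_\mu\rangle = \langle q_\la, S^*.m_\mu\rangle = \langle q_\la, S.m_\mu\rangle$. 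The cleanest way to finish is to transpose: the raising property of $S$ on $\{q_\la\}$ is equivalent, via self-adjointness and the duality $\langle q_\la,m_\mu\rangle=\delta_{\la\mu}$, to the statement that $S$ is a \emph{lowering} operator on $\{m_\la\}$, i.e. $S.m_\la = \sum_{\mu\le\la}D_{\la\mu}m_\mu$. Combining the raising triangularity on $q_\la = Q_\la+\sum_{\mu>\la}(\ast)Q_\mu$ with self-adjointness against $P_\mu$ then pins down the eigenvectors: the operator $S$ is simultaneously upper- and lower-triangular in the pair of dual flags determined by $\{Q_\la\}$ and $\{P_\mu\}$, and the unique basis compatible with both flags and orthogonal under $\langle\cdot,\cdot\rangle$ is precisely $\{P_\la\}$, the Jack functions (this is exactly the Gram–Schmidt characterization in the definition of $P_\la$). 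Hence $S$ is diagonalized by the $P_\la$'s.

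The main obstacle is the ``if'' direction: one must be careful that ``raising on $q_\la$'' plus ``self-adjoint'' genuinely forces simultaneous triangularity in \emph{both} the $Q$-flag and the $P$-flag (these two flags sit in opposite directions under dominance, since $\langle P_\la,P_\mu\rangle=0$ pairs a $\mu\le\la$ tail in the $m$-basis against a $\ge\la$ tail in the $q$-basis). Once that dual-triangularity is established, uniqueness of the Jack functions as the orthogonal basis adapted to these flags does the rest; everything else is the routine unitriangular-change-of-basis bookkeeping afforded by Lemma~\ref{L:triangular}.
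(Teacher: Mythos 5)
Your argument is correct and is exactly the fleshing-out of the proof the paper leaves to the reader (``mainly using Lemma~\ref{L:triangular}''): duality of $\{q_\la\}$ and $\{m_\la\}$ plus self-adjointness converts raising on the $q$'s into lowering on the $m$'s, and the resulting double triangularity of $S$ against the unitriangular expansions of $Q_\la$ in the $q$'s and of $P_\la$ in the $m$'s forces $S.P_\la\in F P_\la$. The closing appeal to uniqueness of the orthogonal basis is not even needed, since the two triangularities already intersect in the line $F P_\la$, but this is a stylistic point, not a gap.
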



To prove Theorem \ref{Dalphatheorem}, we will show that
$D(\alpha)$ is a raising operator on $q_\la$'s as $D(\alpha)$ is
obviously self-adjoint. First, let us look at the one row case.

\begin{lemma} The Jack functions $Q_n(\alpha)$'s are eigenvector of
$D(\alpha)$, explicitly we have
\begin{equation}
D(\alpha).Q_n(\alpha)=(\alpha^2n^2-n\alpha)Q_n(\alpha).
\end{equation}
\end{lemma}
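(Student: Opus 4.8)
The plan is to pass to the generating function $Y(z)=\sum_{n\ge 0}Q_n(\alpha)z^n=\exp\!\big(\tfrac1\alpha\sum_{k\ge1}\tfrac{p_k}{k}z^k\big)$ and to show the operator identity $D(\alpha).Y(z)=\big(\alpha^2(z\partial_z)^2-\alpha\,z\partial_z\big)Y(z)$; extracting the coefficient of $z^n$ then yields $D(\alpha).Q_n(\alpha)=(\alpha^2n^2-\alpha n)Q_n(\alpha)$. This is legitimate because $D(\alpha)$ is a graded operator of degree $0$, so it acts coefficientwise on $Y(z)$, and every manipulation below is really an identity on the finite-dimensional piece $\Lambda_F(n)$ for each fixed $n$.

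First I would record the elementary fact $\frac{\partial}{\partial p_m}Y(z)=\frac{z^m}{m\alpha}Y(z)$, which at once gives $\frac{\partial^2}{\partial p_i\partial p_j}Y(z)=\frac{z^{i+j}}{ij\alpha^2}Y(z)$. Feeding these into the three summands of $D(\alpha)$ and abbreviating $P(z)=\sum_{k\ge1}p_kz^k$, the first term collapses to $P(z)^2Y(z)$, the second to $\sum_{n\ge2}(n-1)p_nz^n\,Y(z)$ (the factor $n-1$ being the number of pairs $(i,j)$ with $i,j\ge1$ and $i+j=n$), and the third to $(\alpha-1)\sum_{k\ge1}kp_kz^k\,Y(z)$. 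Adding the last two sums produces $\sum_{k\ge1}(\alpha k-1)p_kz^k$, so that
$$D(\alpha).Y(z)=\Big(P(z)^2+\sum_{k\ge1}(\alpha k-1)p_kz^k\Big)Y(z).$$

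On the other side, from $\log Y(z)=\tfrac1\alpha\sum_k\tfrac{p_k}{k}z^k$ one gets $z\partial_z\log Y(z)=\tfrac1\alpha P(z)$, hence $(z\partial_z)Y=\tfrac1\alpha P(z)\,Y$, and differentiating once more $(z\partial_z)^2Y=\big(\tfrac1\alpha\,z\partial_zP(z)+\tfrac1{\alpha^2}P(z)^2\big)Y$ with $z\partial_zP(z)=\sum_k kp_kz^k$. Therefore $\alpha^2(z\partial_z)^2Y-\alpha(z\partial_z)Y=\big(P(z)^2+\sum_k(\alpha k-1)p_kz^k\big)Y$, which is exactly the expression obtained above for $D(\alpha).Y(z)$. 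Comparing coefficients of $z^n$ completes the proof.

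There is no genuinely hard step here: once the generating function is in hand the computation is routine. The only places needing a little care are the bookkeeping in the second summand of $D(\alpha)$ — correctly counting the decompositions $n=i+j$ — and the remark that $\partial_{p_m}Y(z)=\frac{z^m}{m\alpha}Y(z)$ is to be read degree by degree, so that applying the degree-$0$ operator $D(\alpha)$ commutes with extracting the coefficient of $z^n$.
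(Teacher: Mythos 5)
Your proof is correct, but it proceeds differently from the paper's. The paper expands $Q_n(\alpha)=\sum_{\lambda\vdash n}\alpha^{-l(\lambda)}z_\lambda^{-1}p_\lambda$ and computes, for each $\mu\vdash n$, the coefficient of $p_\mu$ produced by the three summands of $D(\alpha)$; this forces careful bookkeeping of the multiplicities $m_i(\mu)$, the factors $z_\mu$, and Kronecker deltas such as $(m_j-\delta_{i,j})$, before everything telescopes to $\alpha^{-l(\mu)}z_\mu^{-1}(\alpha^2|\mu|^2-\alpha|\mu|)$. You instead work with the generating function $Y(z)$ and establish the operator identity $D(\alpha).Y(z)=\big(\alpha^2(z\partial_z)^2-\alpha z\partial_z\big)Y(z)$, using only $\partial_{p_m}Y(z)=\frac{z^m}{m\alpha}Y(z)$; the exponential form absorbs all the multiplicity combinatorics, and your counts check out (in particular the factor $n-1$ for the decompositions $n=i+j$, and the sum $(n-1)+(\alpha-1)n=\alpha n-1$). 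Your remark that $D(\alpha)$ is graded of degree $0$, so that it commutes with extracting the coefficient of $z^n$, disposes of the only foundational worry. What your route buys is a computation with essentially no case analysis and a cleaner conceptual statement (the eigenvalue $\alpha^2n^2-\alpha n$ appears as the symbol of the Euler operator); it is also consonant with the paper's own later use of $Y(z)$ to derive $h_m.Q_n=Q_{n-m}$. What the paper's route buys is that the same coefficient-by-coefficient template is reused in the harder Lemma on $A_{m,n}$, where a generating-function shortcut is less readily available.
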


\begin{proof}
Recall that $Q_n(\alpha)=\sum_{\lambda\vdash
n}\alpha^{-l(\lambda)}z_\lambda^{-1}p_{\lambda}$. For
$\mu=(1^{m_1}2^{m_2}\cdots)$, the coefficient of $p_\mu$ in
$D(\alpha).Q_n(\alpha)$ is
\begin{align} \nonumber
&(\alpha-1)\sum_{k\geq 1}\alpha k^2m_k \alpha^{-l(\mu)}z_\mu^{-1}\\
\nonumber &+ \sum_{i,j\geq 1}\alpha
(i+j)(m_{i+j}+1)\alpha^{-(l(\mu)-1)}z_\mu^{-1}im_ij(m_j-\delta_{i,j})(i+j)^{-1}(m_{i+j}+1)^{-1}\\
\nonumber &+ \sum_{i,j\geq 1}\alpha i\alpha
j(m_i+1)\alpha^{-(l(\mu)+1)}z_\mu^{-1}i^{-1}j^{-1}
(m_j+1+\delta_{i,j})^{-1}(i+j)m_{i+j}\\ \nonumber
&=\alpha^{-l(\mu)}z_\mu^{-1}\Big[(\alpha-1)\alpha\sum_{k\geq
1}k^2m_k+\sum_{i,j\geq1}\alpha^2im_ij(m_j-\delta_{i,j})+\sum_{i,j\geq 1}\alpha(i+j)m_{i+j}\Big]\\
\nonumber
&=\alpha^{-l(\mu)}z_\mu^{-1}\Big(\alpha^2|\mu|^2-\alpha |\mu|\Big).
\end{align}
\end{proof}

Note that the first and third summand of $D(\alpha)$ are derivations
on $\Lambda_F$. The second term is a second order differential
operator. The following lemma, Lemma \ref{act-on-prod}, assists in
computing its action on products. First we need the following
corollary of Proposition \ref{T:ptomu}.

\begin{lemma}\label{L:pnasqlambda}
Let $p_n=\sum_{\mu\vdash n} a_{n,\mu}q_\mu(\alpha)$, then we have
$$a_{n,\mu}=n\alpha(-1)^{l(\mu)-1}(l(u)-1)!/m(\mu)!.$$
\end{lemma}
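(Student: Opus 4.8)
The plan is to exploit the fact, recorded in Lemma \ref{L:triangular}, that $\{q_\mu(\alpha)\}$ and $\{m_\nu\}$ are dual bases of $\Lambda_F$ with respect to the Jack scalar product, i.e. $\langle q_\mu(\alpha), m_\nu\rangle = \delta_{\mu\nu}$. Writing $p_n = \sum_{\mu\vdash n} a_{n,\mu}\, q_\mu(\alpha)$ and pairing both sides with $m_\mu$ then gives at once
$$a_{n,\mu} = \langle p_n, m_\mu\rangle .$$
So the whole task reduces to evaluating this single scalar product.

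Next I would expand $m_\mu$ back into the power-sum basis using Proposition \ref{T:ptomu}: $m_\mu = \sum_{\lambda\geq\mu} T_{\mu\lambda}\, p_\lambda$, whence
$$a_{n,\mu} = \sum_{\lambda\geq\mu} T_{\mu\lambda}\, \langle p_n, p_\lambda\rangle .$$
By the definition \eqref{def} of the scalar product, $\langle p_n, p_\lambda\rangle = \langle p_{(n)}, p_\lambda\rangle = \delta_{(n),\lambda}\,\alpha^{l((n))} z_{(n)} = n\alpha\,\delta_{(n),\lambda}$, so every term vanishes except $\lambda=(n)$. Since $(n)$ is the largest partition of $n$ in the dominance order, the constraint $\lambda\geq\mu$ is satisfied, so this term genuinely occurs in the sum, and we obtain $a_{n,\mu} = n\alpha\, T_{\mu,(n)}$.

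Finally I would substitute the explicit value $T_{\mu,(n)} = (-1)^{l(\mu)-1}(l(\mu)-1)!/m(\mu)!$ from Proposition \ref{T:ptomu}, which yields exactly the claimed formula $a_{n,\mu} = n\alpha(-1)^{l(\mu)-1}(l(\mu)-1)!/m(\mu)!$. There is no real obstacle here: the argument is a short duality computation, and the only two points that need explicit mention are the biorthogonality of $\{q_\mu\}$ and $\{m_\nu\}$ (already available) and the observation that $(n)$ dominates every partition of $n$, so that power-sum orthogonality singles out precisely the coefficient $T_{\mu,(n)}$.
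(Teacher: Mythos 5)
Your proposal is correct and is essentially the same argument as the paper's: both use the duality $\langle q_\mu(\alpha),m_\nu\rangle=\delta_{\mu\nu}$ to get $a_{n,\mu}=\langle p_n,m_\mu\rangle$, reduce this to $n\alpha\,T_{\mu,(n)}$, and then quote Proposition \ref{T:ptomu}. The only (cosmetic) difference is that the paper extracts $T_{\mu,(n)}$ via the adjointness $\langle h_{-n}.1,m_\mu\rangle=\langle 1,h_n.m_\mu\rangle$ in the Heisenberg realization, whereas you expand $m_\mu$ in power sums and use the orthogonality relation \eqref{def} directly; these are the same computation.
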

\begin{proof}
\begin{align*}
a_{n,\mu}&=\langle h_{-n}.1,m_\mu\rangle=n\alpha\langle
1,h_{n}.m_\mu\rangle=\langle1,\frac{\partial}{\partial
h_{-n}}m_\mu\rangle=n\alpha
 T_{\mu,(n)}\\
 &=n\alpha(-1)^{l(\mu)-1}(l(u)-1)!/m(\mu)!.
\end{align*}
\end{proof}
Applying $h_m$ to two sides of $Y(z)=exp\Big(\sum_{n=1}^\infty
\frac{z^n}{n\alpha}h_{-n}\Big)=\sum_n Q_{n}(\alpha)z^n$, we have the
following lemma about the action of $h_n$ on one-row Jack functions.
\begin{lemma}\label{L:h_n.Q_m}
For positive integer $m$ and integer $n$, we have $h_m.Q_n=Q_{n-m}$.
\end{lemma}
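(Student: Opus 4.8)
The plan is to apply the degree-lowering operator $h_m$ directly to the generating-function identity $Y(z)=\exp\bigl(\sum_{n\ge 1}\tfrac{z^n}{n\alpha}h_{-n}\bigr)=\sum_n Q_n(\alpha)z^n$ and then extract the coefficient of $z^n$ on both sides.

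First I would record that for $m>0$ the operator $h_m$ acts on $\Lambda_F$ as $m\alpha\,\partial/\partial h_{-m}=m\alpha\,\partial/\partial p_m$; being a first-order differential operator with no constant term, it is a derivation of $\Lambda_F$, and it extends coefficient-wise to $\Lambda_F[[z]]$. Since $h_m$ is graded of degree $-m$ while on each homogeneous component $Y(z)$ is only a finite sum, the interchange of $h_m$ with the infinite sum appearing in the exponential is legitimate.

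Next I would compute $h_m.Y(z)$ by the chain rule. Among the summands $\tfrac{z^k}{k\alpha}h_{-k}=\tfrac{z^k}{k\alpha}p_k$ only the term $k=m$ contributes to $\partial/\partial p_m$, so $h_m.Y(z)=m\alpha\cdot\tfrac{z^m}{m\alpha}\cdot Y(z)=z^m\,Y(z)$. Expanding both sides as power series gives $\sum_n\bigl(h_m.Q_n(\alpha)\bigr)z^n = z^m Y(z)=\sum_n Q_n(\alpha)z^{n+m}=\sum_n Q_{n-m}(\alpha)z^n$, and comparing the coefficient of $z^n$ yields $h_m.Q_n(\alpha)=Q_{n-m}(\alpha)$, with the convention $Q_k=0$ for $k<0$. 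This convention is forced by the grading: $h_m$ lowers degree by $m$, so $h_m.Q_n$ must vanish whenever $m>n$ (and in particular $h_m.Q_0=h_m.1=0$), which is exactly $Q_{n-m}=0$.

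There is essentially no obstacle here; the only point deserving a sentence of justification is commuting the derivation $h_m$ past the formally infinite exponential sum, which is harmless by the grading argument above.
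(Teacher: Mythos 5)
Your proof is correct and is exactly the argument the paper uses: the paper derives the lemma by applying $h_m$ to both sides of the generating-function identity $Y(z)=\exp\bigl(\sum_{n\ge 1}\tfrac{z^n}{n\alpha}h_{-n}\bigr)=\sum_n Q_n(\alpha)z^n$, which is precisely your computation $h_m.Y(z)=z^mY(z)$ followed by coefficient extraction. You have simply written out the details (the derivation property of $h_m$ and the convention $Q_k=0$ for $k<0$) that the paper leaves implicit.
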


\begin{lemma} \label{act-on-prod}
For positive integers $m$ and $n$ with $m\geq n$, set
$$A_{m,n}=\sum_{i,j\geq
1}h_{-(i+j)}(h_i.Q_{m})(h_j.Q_{n})=\sum_{\la}b_\la q_\la,$$ then
$b_\la=[m'(1-\delta_{m,m'})-n']\alpha$ for $\la=(m',n')\geq (m,n)$,
and $b_\la=0$ otherwise.
\end{lemma}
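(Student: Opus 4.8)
The plan is to compute the generating function $\mathcal{A}(w,z)=\sum_{m,n\geq 0}A_{m,n}w^mz^n$ in closed form and then read off the coefficient of $w^mz^n$; the case $n=0$ is immediate from $A_{m,0}=0$, so assume $n\geq 1$. By Lemma \ref{L:h_n.Q_m} we have $A_{m,n}=\sum_{i,j\geq 1}h_{-(i+j)}Q_{m-i}Q_{n-j}$, with $Q_k=0$ for $k<0$. Since $\sum_{m\geq 0}Q_{m-i}w^m=w^iY(w)$, and each $h_{-k}$ (multiplication by $p_k$) commutes with $Y(w)$ and $Y(z)$,
\[
\mathcal{A}(w,z)=Y(w)Y(z)\sum_{i,j\geq 1}w^iz^jh_{-(i+j)}=Y(w)Y(z)\cdot\frac{zP(w)-wP(z)}{w-z},
\]
where $P(w)=\sum_{k\geq 1}w^kh_{-k}$; the last equality comes from grouping the terms with $i+j=k$ and summing $\sum_{i=1}^{k-1}w^iz^{k-i}=(w^kz-wz^k)/(w-z)$ over $k\geq 2$.

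Differentiating $Y(w)=\exp\!\big(\sum_{k\geq 1}\tfrac{w^k}{k\al}h_{-k}\big)$ gives $Y(w)P(w)=\al w\,Y'(w)$, whence
\[
\mathcal{A}(w,z)=\frac{\al\, wz\big(Y(z)Y'(w)-Y(w)Y'(z)\big)}{w-z}.
\]
The numerator vanishes at $w=z$, so this is a genuine power series (and visibly symmetric, recovering $A_{m,n}=A_{n,m}$). Multiplying by $w-z$ and comparing coefficients of $w^mz^n$, using $Y(w)=\sum_aQ_aw^a$ and $Y'(w)=\sum_a aQ_aw^{a-1}$, produces the recursion
\[
A_{m,n}-A_{m+1,n-1}=\al\big((m+1)Q_{m+1}Q_{n-1}-nQ_mQ_n\big).
\]

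Finally I would telescope this recursion down to the base case $A_{m+n,0}=0$, getting $A_{m,n}=\sum_{k=0}^{n-1}\al\big((m+k+1)Q_{m+k+1}Q_{n-k-1}-(n-k)Q_{m+k}Q_{n-k}\big)$; substituting $u=k+1$ in the first sum and $u=k$ in the second collapses this to
\[
A_{m,n}=\al\Big((m+n)Q_{m+n}-nQ_mQ_n+\sum_{u=1}^{n-1}(m-n+2u)\,Q_{m+u}Q_{n-u}\Big).
\]
Since $m\geq n$, each $(m+u,n-u)$ with $0\leq u\leq n$ is a partition, so $Q_{m+u}Q_{n-u}=q_{(m+u,n-u)}$; writing $\la=(m',n')=(m+u,n-u)$ the coefficient is $-n'\al$ when $u=0$, $(m'-n')\al$ when $1\leq u\leq n-1$, and $m'\al$ when $u=n$, i.e. $b_\la=[m'(1-\delta_{m,m'})-n']\al$ in all cases, while $b_\la=0$ for every other $\la$ by uniqueness of the expansion in the basis $\{q_\la\}$. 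The only delicate points are the coefficient bookkeeping in the extraction from $\mathcal A$ and the reindexing after telescoping; the rest reduces to the identity $Y(w)P(w)=\al w\,Y'(w)$ and the closed form for $\sum_{i,j\geq 1}w^iz^jh_{-(i+j)}$.
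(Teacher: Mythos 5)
Your proof is correct, and it takes a genuinely different route from the paper's. The paper argues directly on coefficients: it writes $A_{m,n}=\sum_{i,j\geq 1}h_{-(i+j)}q_{m-i}q_{n-j}$, expands each $p_{i+j}=h_{-(i+j)}$ in the $q_\mu$-basis via Lemma \ref{L:pnasqlambda}, and then splits into the cases $l(\la)=1$, $l(\la)=2$, $l(\la)\geq 3$, where the last case requires a delicate cancellation (showing the bracketed sum $A$ vanishes in three subcases depending on where $\la_1$ sits relative to $m$ and $n$). You instead package everything into the generating function $\mathcal{A}(w,z)$, use $Y(w)P(w)=\al w\,Y'(w)$ to get the closed form $\al wz\bigl(Y(z)Y'(w)-Y(w)Y'(z)\bigr)/(w-z)$, and telescope the resulting two-term recursion down to $A_{m+n,0}=0$. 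The main payoff of your approach is that the vanishing for $l(\la)\geq 3$ (the hardest part of the paper's proof) comes for free, since the closed form is manifestly a linear combination of products of at most two $Q$'s; the symmetry $A_{m,n}=A_{n,m}$ is also visible at a glance. The paper's computation, by contrast, needs no generating-function machinery and exposes the role of the transition coefficients $a_{n,\mu}$, which it reuses elsewhere. Two small points you should make explicit: the coefficient you actually compare in $(w-z)\mathcal{A}(w,z)$ is that of $w^{m+1}z^{n}$ (not $w^mz^n$), though the recursion you state is the correct one after shifting; and the final step ``$b_\la=0$ for every other $\la$'' uses that $\{q_\la\}$ is a basis of $\Lambda_F$, which follows from the unitriangularity in Lemma \ref{L:triangular}.
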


\begin{proof}
By Lemma \ref{L:h_n.Q_m}, we have
$$A_{m,n}=\sum_{i,j\geq 1}h_{-(i+j)}q_{m-i}q_{n-j}.$$ Let's consider
the three cases of $\la$: $l(\la)=1$, $l(\la)=2$, and $l(\la)\geq
3$.

First, for $l(\la)=1$, we have $\la=(m+n,0)\geq (m,n)$. In the
summation of $A_{m,n}$, only $(i,j)=(m,n)$ contributes to $q_\la$.
By Lemma \ref{L:pnasqlambda}, the coefficient is
$a_{(m+n),(m+n)}=(m+n)\alpha$. This is in accord with the statement
of the lemma.

Second, for $l(\la)=2$, set $\lambda=(m',n')$. If we don't have
$\la\geq (m,n)$, then we have $m>m'>n'>n$ or $m>m'=n'>n$. If
$m>m'>n'>n$, the coefficient of $q_\lambda$ in $A_{m,n}$ is
\begin{align*}
&a_{n',n'}+a_{m',m'}+a_{m'+n',(m',n')}=n'\alpha+m'\alpha+(m'+n')\alpha(-1)=0.
\end{align*}
It can be found similarly that the coefficient is also zero if
$m>m'=n'>n$. If we do have $\la\geq (m,n)$, it can be found
similarly that $(m'-n')\alpha$ if $m'>m\geq n>n'$ , and the
coefficient is $-n'\alpha$ if $m'=m\geq n=n'$.

Third, for $l(\la)\geq3$, set
$\lambda=(\lambda_1,\lambda_2,\cdots)=(1^{m_1}2^{m_2}\cdots)$. The
coefficient of $q_\lambda$ is
\begin{align}
&\alpha(-1)^{l(\lambda)}(l(\lambda)-3)!(m(\lambda)!)^{-1}\Big[-(m+n)(l(\lambda)-1)(l(\lambda)-2)\\
\nonumber
&+\sum_{j=1}^{n-1}(m+n-j)(l(\lambda)-2)m_j+\sum_{i=1}^{m-1}(m+n-i)(l(\lambda)-2)m_i\\
\nonumber
&+\sum_{i=1}^{m-1}\sum_{j=1}^{n-1}(m+n-i-j)(-1)m_i(m_j-\delta_{i,j})\Big].
\end{align}
For convenience we denote the term inside of the square bracket as
$A$. The four summands in $A$ correspond to the four kinds of
assignment of $(i,j)$, with the first one corresponding to
$(i,j)=(m,n)$, the second corresponding to $i=m$,$j=1,\cdots,n-1$, the
third to $i=1,\cdots,m-1$, $j=n$, and the last to $i=1,\cdots,m-1$,
$j=1,\cdots,n-1$. We need to prove that $A=0$.

In the first subcase that $\lambda_1\leq n-1$,  $A$ is equal to
\begin{align*}
A_1&=-(m+n)(l(\la)-1)(l(\la)-2)+2(m+n)(l(\la)-1)(l(\la)-2)\\
&\qquad\qquad\qquad-(m+n)(l(\la)-1)(l(\la)-2)\\
&=0,
\end{align*}
where we use the property that $\sum_im_i=l(\la)$, and
$\sum_iim_i=m+n$.

In the second subcase that $\la_1\geq m$, 
%
 and the third subcase that $m>\la_1\geq n$, 
$A$ can be proved similarly to be zero.
\end{proof}

\section{Raising operator formula for Laplace-Beltrami operator}

The differential operator $D(\alpha)$ acts triangularly on the generalized homogeneous polynomials.

\begin{proposition}\label{Dalphaproposition}
The action of $D(\alpha)$ on $q_\la$ is given explicitly as follows.
\begin{align}
&D(\alpha).q_\lambda(\alpha)\\
\nonumber &=e_\lambda(\alpha)q_\lambda(\alpha)+2\alpha
\sum_{i<j}\sum_{k\geq 1}(\lambda_i-\lambda_j+2k)q_{\lambda_1}\cdots
q_{\lambda_i+k}\cdots q_{\lambda_j-k}\cdots.
\end{align}
\end{proposition}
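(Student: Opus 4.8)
The plan is to compute the action of each of the three summands of $D(\alpha)$ on $q_\lambda = Q_{\lambda_1}Q_{\lambda_2}\cdots Q_{\lambda_l}$ separately, exploiting that $q_\lambda$ is a product of one-row Jack functions and that the first and third summands are derivations while the second is a second-order differential operator. Write $D(\alpha) = D_1 + D_2 + D_3$ where $D_1 = \sum_{i,j\geq1} h_{-(i+j)}h_i h_j$, $D_2 = \sum_{i,j\geq1} h_{-i}h_{-j}h_{i+j}$, and $D_3 = (\alpha-1)\sum_{i\geq1} i\,h_{-i}h_i$. Since $D_1$ and $D_3$ are derivations on $\Lambda_F$ (the annihilation parts act by differentiation), applying them to a product $Q_{\lambda_1}\cdots Q_{\lambda_l}$ yields a Leibniz sum; the ``diagonal'' contributions where all derivatives hit the same factor reproduce, by the one-row lemma $D(\alpha).Q_n = (\alpha^2 n^2 - n\alpha)Q_n$, the eigenvalue piece $e_\lambda(\alpha)q_\lambda$, while the remaining ``cross'' contributions are precisely the off-diagonal sum. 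The key computational tool throughout is Lemma~\ref{L:h_n.Q_m}: $h_m.Q_n = Q_{n-m}$ for $m>0$.

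First I would handle the second summand $D_2 = \sum_{i,j\geq1} h_{-i}h_{-j}h_{i+j}$. Because $h_{i+j}$ is a first-order operator (a derivation), its action on the product $Q_{\lambda_1}\cdots Q_{\lambda_l}$ splits as a sum over which factor $Q_{\lambda_r}$ it differentiates, giving $h_{i+j}.Q_{\lambda_r} = Q_{\lambda_r - i - j}$; the creation operators $h_{-i}h_{-j}$ then reinsert. Summing over $i,j$, the terms with a single factor touched contribute to the diagonal, and one is left with a sum of the form $\sum_r \sum_{k\geq1}(\text{coeff})\, q_{\lambda_1}\cdots q_{\lambda_r + k}\cdots q_{\lambda_r - k}\cdots$—but note this produces partitions with a split, not a transfer between two distinct rows. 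The genuinely new contributions—transferring $k$ boxes from row $j$ to row $i$—come from $D_1 = \sum_{i,j\geq1} h_{-(i+j)}h_i h_j$ acting via Leibniz: the two annihilation operators $h_i, h_j$ hit two distinct factors $Q_{\lambda_p}$ and $Q_{\lambda_q}$, producing $Q_{\lambda_p - i}Q_{\lambda_q - j}$, and then $h_{-(i+j)}$ creates a factor $Q_{i+j}$. Here I would use Lemma~\ref{act-on-prod}, which is exactly engineered for this: it computes $A_{m,n} = \sum_{i,j}h_{-(i+j)}(h_i.Q_m)(h_j.Q_n) = \sum_{(m',n')\geq(m,n)}[m'(1-\delta_{m,m'}) - n']\alpha\, q_{(m',n')}$, the two-row building block. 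Feeding the pair $(\lambda_p,\lambda_q)$ (ordered with $\lambda_p \geq \lambda_q$) into $A_{\lambda_p,\lambda_q}$ and expanding $q_{(m',n')}$ back into $Q_{m'}Q_{n'}$ yields, after reindexing $m' = \lambda_p + k$, $n' = \lambda_q - k$, exactly the terms $q_{\lambda_1}\cdots q_{\lambda_p+k}\cdots q_{\lambda_q-k}\cdots$; and similarly $D_2$'s cross-terms and the pieces of $D_1$ where $h_i,h_j$ hit the same factor must be combined. The upshot is that all these cross-contributions organize, for each pair $i<j$ of rows, into $2\alpha\sum_{k\geq1}(\lambda_i - \lambda_j + 2k)q_{\lambda_1}\cdots q_{\lambda_i+k}\cdots q_{\lambda_j-k}\cdots$, with the factor $2$ and the coefficient $\lambda_i-\lambda_j+2k$ emerging from the bookkeeping between $D_1$ and $D_2$.

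The main obstacle is the bookkeeping: one must carefully track three sources of off-diagonal terms—$D_1$ with the two derivatives on distinct factors, $D_1$ with both derivatives on one factor, and $D_2$'s splitting of a single factor—and verify they combine into the single clean sum with coefficient $\lambda_i - \lambda_j + 2k$, while the remaining terms collapse to $e_\lambda(\alpha)q_\lambda$ via the one-row lemma. The cleanest way to organize this is to note that $D(\alpha)$ ``factors through'' two-variable and one-variable pieces: the action on $Q_{\lambda_p}Q_{\lambda_q}$ alone is governed by $A_{\lambda_p,\lambda_q}$ plus the $D_2$-splitting on each factor, so one can first establish the two-row formula $D(\alpha).(Q_m Q_n) = e_{(m,n)}(\alpha)Q_m Q_n + 2\alpha\sum_{k\geq1}(m-n+2k)Q_{m+k}Q_{n-k}$ and then argue that for general $\lambda$ the derivation property of $D_1, D_3$ and the first-order nature of $h_{i+j}$ in $D_2$ reduce everything to a sum over pairs of rows. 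The coefficient $\lambda_i - \lambda_j + 2k$ should then be checked against the two-row case, where it is essentially $m' - n' = (\lambda_i + k) - (\lambda_j - k) = \lambda_i - \lambda_j + 2k$, matching the $[m' - n']\alpha$ shape in Lemma~\ref{act-on-prod} once the $\delta_{m,m'}$ term is absorbed into the diagonal; I would double-check the lower limit $k\geq1$ and the convention $Q_i = 0$ for $i<0$ truncate the sum correctly.
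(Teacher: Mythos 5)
Your proposal follows essentially the same route as the paper's proof: split $D(\alpha)$ into its derivation part and the second-order summand $\sum_{i,j}h_{-(i+j)}h_ih_j$, apply the Leibniz rule so that the single-factor contributions collapse via the one-row eigenvalue lemma while the two-factor cross terms reduce to Lemma~\ref{act-on-prod}, whose coefficients $[m'(1-\delta_{m,m'})-n']\alpha$ supply both the $(\lambda_i-\lambda_j+2k)$ terms and the piece of $e_\lambda(\alpha)$ not accounted for by $\sum_l(\alpha^2\lambda_l^2-\alpha\lambda_l)$. Only note two slips that your later paragraphs implicitly correct: your opening sentence calls $D_1=\sum h_{-(i+j)}h_ih_j$ a derivation when it is the second-order part (the derivation is $D_2=\sum h_{-i}h_{-j}h_{i+j}$), and $h_{-(i+j)}$ creates $p_{i+j}$ rather than $Q_{i+j}$, which is exactly why Lemma~\ref{act-on-prod} (via Lemma~\ref{L:pnasqlambda}) is needed to re-expand in the $q$-basis.
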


\begin{proof}
%

Write $D(\alpha)=A(\alpha)+B(\alpha)$ with $B(\alpha)=\sum_{i,j\geq
1}h_{i}h_{j}h_{-(i+j)}$. Then $A(\alpha)$ is a derivation on $V$.
For $q_\lambda=q_{\lambda_1}q_{\lambda_2}\cdots q_{\lambda_s}$, we
have:
\begin{align*}
B(\alpha).q_\lambda&
\nonumber  =\sum_lq_{\lambda_1}\cdots
(B(\alpha).q_{\lambda_l})\cdots q_{\lambda_s}\\
&\qquad\qquad\qquad+\sum_{l\neq m}\sum_{i,j\geq
1}h_{-(i+j)}q_{\lambda_1}\cdots
(h_{i}.q_{\lambda_l})\cdots(h_{j}.q_{\lambda_m})\cdots
q_{\lambda_s}.
\end{align*}

Combining the action of $A(\alpha)$ and $B(\alpha)$  we have
\begin{align*}
D(\alpha).q_\lambda&
=\sum_lq_{\lambda_1}\cdots (D(\alpha).q_{\lambda_l})\cdots
q_{\lambda_s}\\ &\quad\qquad\quad+2\sum_{l<m}\sum_{i,j\geq
1}h_{-(i+j)}q_{\lambda_1}\cdots q_{\lambda_l-i}\cdots
q_{\lambda_m-j}\cdots
q_{\lambda_s}\\
\end{align*}
Applying Lemma (\ref{act-on-prod}) to it finishes the proof.
\end{proof}

\begin{proposition}\label{Q'}
There exists a unique family of rational functions
$\{C_{\la\mu}(\al)|\mu\geq\la\}$  such that

(1) $C_{\la\la}(\al)=1$ for all $\la\in\mathcal{P}$,

(2) $\sum_{\mu\geq\la}C_{\la\mu}(\al)q_\mu(\al)$, denoted as
$Q'_\la(\al)$, is an eigenvector for $D(\alpha)$ for each
$\la\in\mathcal{P}$.
\end{proposition}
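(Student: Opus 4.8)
The plan is to exploit the triangularity established in Proposition \ref{Dalphaproposition}. Order the partitions of a fixed weight $n$ by the reverse lexicographic order $\geq^L$, which refines the dominance order: if $\mu\geq\la$ (dominance) then $\mu\geq^L\la$. By Proposition \ref{Dalphaproposition}, $D(\alpha).q_\la = e_\la(\alpha)q_\la + \sum_{\mu >^L \la} d_{\la\mu}(\alpha)q_\mu$ for suitable rational functions $d_{\la\mu}(\alpha)\in F$, since every partition $\mu$ appearing in the correction term, obtained from $\la$ by moving $k$ boxes from a shorter row $j$ to a longer row $i$, strictly dominates $\la$ and hence is strictly larger in $\geq^L$. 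Thus on each finite-dimensional graded piece $\Lambda_F(n)$, the matrix of $D(\alpha)$ in the basis $\{q_\la\}_{\la\vdash n}$ is upper triangular with diagonal entries $e_\la(\alpha)$.

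Next I would make the \emph{ansatz} $Q'_\la(\alpha)=\sum_{\mu\geq^L\la}C_{\la\mu}(\alpha)q_\mu(\alpha)$ with $C_{\la\la}(\alpha)=1$ and solve the eigenvalue equation $D(\alpha).Q'_\la = e_\la(\alpha)Q'_\la$ by comparing coefficients of $q_\nu$ for $\nu >^L \la$. This yields the recursion
\begin{equation*}
\bigl(e_\nu(\alpha)-e_\la(\alpha)\bigr)C_{\la\nu}(\alpha) = -\sum_{\la \leq^L \mu <^L \nu} C_{\la\mu}(\alpha)\, d_{\mu\nu}(\alpha),
\end{equation*}
which determines $C_{\la\nu}(\alpha)$ uniquely from the $C_{\la\mu}(\alpha)$ with $\mu$ strictly below $\nu$, \emph{provided} $e_\nu(\alpha)\neq e_\la(\alpha)$. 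Induction on the $\geq^L$-order then gives existence and uniqueness of the family $\{C_{\la\mu}(\alpha)\}$, and since only finitely many $\mu\vdash n$ lie above $\la$, the sum defining $Q'_\la(\alpha)$ is finite and $Q'_\la(\alpha)\in\Lambda_F(n)$.

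The main obstacle is the separation of eigenvalues: I must check that $e_\nu(\alpha)\neq e_\la(\alpha)$ as elements of $F=\mathbb{Q}(\alpha)$ whenever $\la$ and $\nu$ are distinct partitions of $n$ with $\nu >^L \la$ (equivalently, whenever $\nu\neq\la$ and $\nu\geq\la$ in dominance, since the correction terms only produce dominance-larger partitions). Writing $e_\la(\alpha)=\alpha^2\sum_i\la_i^2 + \alpha(n - 2\sum_i i\la_i)$, the equation $e_\nu(\alpha)=e_\la(\alpha)$ in $\mathbb{Q}(\alpha)$ forces both $\sum_i\nu_i^2 = \sum_i\la_i^2$ and $\sum_i i\nu_i = \sum_i i\la_i$; I would then argue that for $\nu$ strictly dominating $\la$ one has $\sum_i\nu_i^2 > \sum_i\la_i^2$ (a standard consequence of the fact that dominance order is generated by raising operations that strictly increase $\sum_i\la_i^2$), which contradicts the first equality. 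This rules out coincidences among the relevant eigenvalues and makes the recursion well-posed; the remaining bookkeeping — finiteness of the sums and that each $C_{\la\mu}(\alpha)$ lands in $\mathbb{Q}(\alpha)$ — is immediate since at each step we divide by the nonzero rational function $e_\nu(\alpha)-e_\la(\alpha)$.
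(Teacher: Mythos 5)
Your proposal is correct and follows essentially the same route as the paper: the triangularity of $D(\alpha)$ on the $q_\la$ basis from Proposition \ref{Dalphaproposition}, the ansatz $Q'_\la=\sum_{\mu\geq\la}C_{\la\mu}q_\mu$ with $C_{\la\la}=1$, and the recursion obtained by comparing coefficients, solved by dividing by $e_\nu(\alpha)-e_\la(\alpha)$ (the paper inducts on the dominance order directly, you on a total refinement of it, which is an immaterial difference). The only added value is that you actually prove the eigenvalue separation $e_\nu(\alpha)\neq e_\la(\alpha)$ for $\nu>\la$ via the strict increase of $\sum_i\la_i^2$ under raising moves, which the paper merely cites as the ``well-known'' Lemma \ref{L:eigenvalue}.
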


\begin{proof}For each partition $\la$, we use induction on the
dominance ordering $\geq$ to prove that there is a unique family
$\{C_{\la\mu}(\al)|\mu\geq\la\}$, such that the two properties are
satisfied. First note that by setting $C_{\la\la}(\al)=1$ we see the
coefficient of $q_\la(\al)$ in the following expression is zero:
\begin{equation}\label{E:Q'alpha}
e_\la(\al)\sum_{\xi\geq\la}C_{\la\xi}(\al)q_\xi(\al)-\sum_{\xi\geq\la}C_{\la\xi}(\al)D(\al).q_\xi(\al).
\end{equation}
This is due to the fact that the coefficient of $q_\la$ in
$D(\al).q_\la$ is $e_\la(\al)$. Now assume that for each $\nu$ such
that $\la\leq\nu<\mu$, $C_{\la\nu}(\al)$ is already found such that
the coefficients of $q_\nu(\al)$ in equation (\ref{E:Q'alpha}) are
zero. We will see that $C_{\la\mu}(\alpha)$ is uniquely determined
such that the coefficient of $q_\mu(\al)$ in equation
(\ref{E:Q'alpha}) is also zero. This means that we must have the
following equation
\begin{align*}
e_\la(\al)C_{\la\mu}(\al)-\sum_{\la\leq\nu\leq\mu}C_{\la\nu}(\al)R_{\nu\mu}(\al)=0,
\end{align*}
where $R_{\nu\omega}(\alpha)$ is defined by
$D(\al).q_\nu(\al)=\sum_{\omega\geq\nu}
R_{\nu\omega}(\al)q_\omega$ (thus we have
$R_{\mu\mu}(\al)=e_\mu(\al)$). By the inductive hypothesis,
$C_{\la\nu}(\al)$ have already been found except $\nu=\mu$. Then we
 solve the equation to determine $C_{\la\mu}(\al)$:
\begin{align*}
C_{\la\mu}(\al)=\frac{\sum_{\nu<\mu,
|\nu|=|\la|}C_{\la\nu}(\al)R_{\nu\mu}(\al)}{e_\la(\al)-e_\mu(\al)}.
\end{align*}
Here we note that $e_\la(\al)-e_\mu(\al)\neq0$ as $\la<\mu$ by the
following well-known Lemma \ref{L:eigenvalue}. This finishes the
existence and uniqueness of the family of
$C_{\la\mu}(\al)$'s.
\end{proof}

\begin{lemma}\label{L:eigenvalue}
For two partitions $\la$, $\mu$ with $\la>\mu$, we have
$e_\la(\al)\neq e_{\mu}(\al)$.
\end{lemma}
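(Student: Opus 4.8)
The plan is to compute the eigenvalue
$$e_\la(\al)=\al^2\sum_i\la_i^2+\al\big(|\la|-2\sum_i i\la_i\big)$$
explicitly and show it is injective on partitions of a fixed weight $n$ with respect to the dominance order. Since $|\la|=|\mu|=n$ is fixed, the term $\al|\la|$ is the same for both partitions, so $e_\la(\al)-e_\mu(\al)=\al^2\big(\sum_i\la_i^2-\sum_i\mu_i^2\big)-2\al\big(\sum_i i\la_i-\sum_i i\mu_i\big)$. As this is a polynomial in $\al$, it suffices to show that at least one of the two coefficients is nonzero whenever $\la\neq\mu$; in fact I expect to show the leading ($\al^2$) coefficient $\sum_i\la_i^2-\sum_i\mu_i^2$ is strictly positive when $\la>\mu$, which immediately gives $e_\la(\al)\neq e_\mu(\al)$ as polynomials, hence for all $\al$ (and a fortiori generically in the field $F=\mathbb Q(\al)$).

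The key step is therefore the combinatorial claim: if $\la>\mu$ in dominance order and $|\la|=|\mu|$, then $\sum_i\la_i^2>\sum_i\mu_i^2$. I would prove this via the standard fact that the dominance order is generated by elementary "raising" moves: $\mu$ is covered by $\la$ (or connected to $\la$ by a chain of such moves) precisely when $\la$ is obtained from $\mu$ by removing one box from some row $k$ and adding it to an earlier row $j<k$ with $\mu_j\ge\mu_k$ (equivalently $\la_j=\mu_j+1$, $\la_k=\mu_k-1$, all other parts equal), after which one re-sorts. For a single such move the change in $\sum_i\la_i^2$ is $(\mu_j+1)^2+(\mu_k-1)^2-\mu_j^2-\mu_k^2=2(\mu_j-\mu_k)+2=2(\mu_j-\mu_k+1)\ge 2>0$, using $\mu_j\ge\mu_k$. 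Summing over the chain of moves connecting $\mu$ to $\la$ gives strict monotonicity of $\sum_i\la_i^2$ along the chain, hence $\sum_i\la_i^2>\sum_i\mu_i^2$.

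I expect the main (and only real) obstacle to be the bookkeeping around the chain-of-moves characterization of dominance order: one must be slightly careful that after a raising move the resulting composition, once sorted into a partition, is still $\ge\mu$ and $\le\la$, so that the induction closes, and that the quantity $\sum_i(\cdot)_i^2$ is unaffected by re-sorting (which is obvious since it is symmetric in the parts). An alternative, if one prefers to avoid invoking the covering description, is to use the identity $\sum_i\la_i^2=\sum_i(2i-1)\la_i'$ relating a partition's squared parts to its conjugate, or to write $\sum_i\la_i^2-\sum_i\mu_i^2=\sum_{i}(\la_i-\mu_i)(\la_i+\mu_i)$ and perform an Abel summation against the partial sums $\sum_{j\le i}(\la_j-\mu_j)\ge 0$; since $\la_i+\mu_i$ is weakly decreasing in $i$ and the partial sums are nonnegative with at least one strictly positive (because $\la\neq\mu$ but $|\la|=|\mu|$), the result is strictly positive. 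Either route is short; I would present the Abel-summation argument since it is self-contained and does not require the covering-relation lemma. Finally, having the polynomial in $\al$ with nonzero leading coefficient, I conclude $e_\la(\al)\neq e_\mu(\al)$ in $F=\mathbb Q(\al)$, completing the proof and justifying the division by $e_\la(\al)-e_\mu(\al)$ used in Proposition~\ref{Q'}.
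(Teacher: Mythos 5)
Your proposal is correct, but there is nothing in the paper to compare it against: the authors simply label Lemma \ref{L:eigenvalue} as ``well-known'' and give no proof, so you are supplying an argument the paper omits. Your reduction is the right one --- since $|\la|=|\mu|$, the difference $e_\la(\al)-e_\mu(\al)$ is $\al^2\bigl(\sum_i\la_i^2-\sum_i\mu_i^2\bigr)-2\al\bigl(\sum_i i\la_i-\sum_i i\mu_i\bigr)$, and a strictly positive leading coefficient settles the inequality in $F=\mathbb Q(\al)$ (note, though, that your parenthetical ``hence for all $\al$'' overreaches: this quadratic does vanish at one positive value of $\al$; what the paper needs, and what you prove, is inequality as elements of $F$, which is what justifies the division in Proposition \ref{Q'}). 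Of your two routes to $\sum_i\la_i^2>\sum_i\mu_i^2$, the covering-move argument is airtight as written: each elementary raising move changes $\sum_i\la_i^2$ by $2(\mu_j-\mu_k+1)\ge 2$. The Abel-summation variant, which you say you would actually present, needs one more line for strictness: having some partial sum $S_i=\sum_{j\le i}(\la_j-\mu_j)>0$ and $w_i=\la_i+\mu_i$ weakly decreasing does not by itself force $\sum_i S_i(w_i-w_{i+1})>0$; you must locate a strict decrease of $w$ inside a maximal interval $[a,m]$ where $S>0$. This follows because $S_{a-1}=0$ gives $\la_a>\mu_a$ and $S_{m+1}=0$ gives $\mu_{m+1}>\la_{m+1}$, whence $w_a\ge w_{m+1}+2$, so $w$ must drop somewhere in $[a,m]$ where $S$ is positive. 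With that sentence added, either version is a complete and self-contained proof of the lemma.
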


\medskip

Now it is clear that Theorem \ref{Dalphatheorem} follows by combining
Lemma \ref{L:OHJAE}, Proposition
\ref{Dalphaproposition} and Proposition \ref{Q'}.
\medskip

We can rewrite the action of the differential operator $D(\alpha)$
on the basis of $q_{\la}$. Recall that a raising operator is a
product of simple raising operator $R_{ij}$ on partitions:
\begin{equation}
R_{ij}\la=(\la_1, \cdots, \la_i+1, \cdots, \la_j-1, \cdots, \la_l)
\end{equation}
We also define the action of a raising operator on $q_{\la}$ by its
action on the associated partition $\la$, i.e.,
$R_{ij}q_{\la}=q_{R_{ij}\la}$.

Using the raising operator on the basis $\{q_{\la}\}$, we can
rewrite the action easily as follows.

\begin{corollary} \label{C:action-of-D}
The action of $D(\alpha)$ on the basis $q_{\lambda}$ is given by
\begin{align*}
D(\alpha)q_{\lambda}&=G_{\la}(R_{ij})q_{\la}\\
&=\left[e_{\lambda}(\alpha)+2\alpha\sum_{1\leq i<j\leq
l(\la)}\left(\frac{(\la_i-\la_j)R_{ij}}{1-R_{ij}}+\frac{2R_{ij}}{(1-R_{ij})^2}
\right)\right]q_{\la}.
\end{align*}
\end{corollary}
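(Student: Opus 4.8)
The plan is to derive Corollary \ref{C:action-of-D} directly from Proposition \ref{Dalphaproposition} by recognizing the sum over $k$ as a geometric-type series in the raising operator $R_{ij}$. First I would fix a pair $i<j$ and examine the inner sum $\sum_{k\geq 1}(\lambda_i-\lambda_j+2k)\,q_{\lambda_1}\cdots q_{\lambda_i+k}\cdots q_{\lambda_j-k}\cdots$. By definition $R_{ij}q_\lambda=q_{R_{ij}\lambda}$, so applying $R_{ij}$ $k$ times gives $R_{ij}^k q_\lambda = q_{\lambda_1}\cdots q_{\lambda_i+k}\cdots q_{\lambda_j-k}\cdots$, and thus the inner sum is exactly $\sum_{k\geq 1}(\lambda_i-\lambda_j+2k)R_{ij}^k q_\lambda$. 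Splitting this as $(\lambda_i-\lambda_j)\sum_{k\geq 1}R_{ij}^k + 2\sum_{k\geq 1}k R_{ij}^k$ applied to $q_\lambda$, I would then invoke the formal identities $\sum_{k\geq 1}R_{ij}^k = \frac{R_{ij}}{1-R_{ij}}$ and $\sum_{k\geq 1}k R_{ij}^k = \frac{R_{ij}}{(1-R_{ij})^2}$, which hold as formal power series in $R_{ij}$ (equivalently, $(1-R_{ij})^{-1}$ and $(1-R_{ij})^{-2}$ are to be expanded as geometric series). This immediately yields the bracketed operator $e_\lambda(\alpha)+2\alpha\sum_{i<j}\bigl(\frac{(\lambda_i-\lambda_j)R_{ij}}{1-R_{ij}}+\frac{2R_{ij}}{(1-R_{ij})^2}\bigr)$ applied to $q_\lambda$.

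The one genuine subtlety—and the step I expect to be the main obstacle—is making precise the sense in which the infinite series in $R_{ij}$ terminates, so that the rational-function expression $\frac{R_{ij}}{1-R_{ij}}$ actually denotes a well-defined finite operator on $\Lambda_F$. The point is that $R_{ij}^k q_\lambda = q_{\mu}$ where $\mu = (\ldots,\lambda_i+k,\ldots,\lambda_j-k,\ldots)$, and once $k>\lambda_j$ the entry $\lambda_j-k$ is negative, forcing $Q_{\lambda_j-k}(\alpha)=0$ and hence $q_\mu=0$; if $k$ makes the sequence non-decreasing past $\lambda_i$ one must also be careful that we are working with the tuple, not the sorted partition, but since $q_\mu$ depends only on the multiset of parts this causes no ambiguity. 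So for each fixed $\lambda$ only finitely many terms ($1\leq k\leq \lambda_j$) survive, and the formal series, when applied to any $q_\lambda$, reduces to the finite sum appearing in Proposition \ref{Dalphaproposition}. I would state this as the meaning of the notation $\frac{R_{ij}}{1-R_{ij}}$ (and its square) and note that $G_\lambda(R_{ij})$ is shorthand for this operator.

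Having set up the bookkeeping, the actual computation is the routine reindexing carried out above, so I would keep the writeup short: note that $e_\lambda(\alpha)$ is the diagonal contribution (the coefficient of $q_\lambda$ in $D(\alpha).q_\lambda$, from the $k=0$ terms which are absent from the sum but accounted for by $e_\lambda$), match the $k\geq 1$ terms term-by-term against the two geometric series, and conclude. One should also double-check the coefficient $2\alpha$: it appears already in the statement of Proposition \ref{Dalphaproposition}, so it simply carries through, with the factor of $2$ inside the square bracket in front of $\frac{R_{ij}}{(1-R_{ij})^2}$ coming from the $2k$ in $\lambda_i-\lambda_j+2k$. Thus the corollary is essentially a restatement of Proposition \ref{Dalphaproposition} in generating-function form, and no new input beyond the definition of $R_{ij}$ and the two standard power-series identities is needed.
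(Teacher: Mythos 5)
Your proposal is correct and follows essentially the same route as the paper: rewrite the $k$-sum from Proposition \ref{Dalphaproposition} as $\sum_{k\geq 1}(\lambda_i-\lambda_j+2k)R_{ij}^k q_\lambda$, apply the identities $\sum_{k\geq1}R^k=\frac{R}{1-R}$ and $\sum_{k\geq1}kR^k=\frac{R}{(1-R)^2}$, and note that the series terminates since $R_{ij}^k q_\lambda=0$ for $k>\min(\lambda_i,\lambda_j)=\lambda_j$. Your extra care about the meaning of the rational expressions as finite operators is a fair elaboration of the paper's brief remark, not a departure from it.
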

\begin{proof} Note that
$$R_{ij}^kq_{\la}=q_{\la_1}\cdots q_{\la_i+k}\cdots q_{\la_j-k}\cdots q_{\la_l}, $$
and $R_{ij}^kq_{\la}=0$ whenever $k>min(\la_i, \la_j)$. It follows
that
\begin{align*}
&D(\alpha)q_\lambda\\
&=e_\lambda(\alpha)q_\lambda+2\alpha
\sum_{i<j}\sum_{k\geq 1}(\lambda_i-\lambda_j+2k)R_{ij}^kq_{\lambda}\\
&=e_\lambda(\alpha)q_\lambda+2\alpha
\sum_{i<j}\left((\lambda_i-\lambda_j)\frac{R_{ij}}{1-R_{ij}}+\frac{2R_{ij}}{(1-R_{ij})^2}\right)q_{\lambda},
\end{align*}
where we used $\sum_{k\geq 1}kR^k=\frac R{(1-R)^2}$.
\end{proof}

When $\la$ is a rectangle, we have
\begin{equation}
D(\alpha)q_{\lambda}=\left[e_{\lambda}(\alpha)+\binom{\l(\al)}2\frac{4R_{ij}}{(1-R_{ij})^2}
\right]q_{\la},
\end{equation}
where $i<j$ is a fixed pair. The formula clearly shows that the case
of rectangular shapes are special.

\begin{remark}We will derive a raising-operator-like formula (see corollary
\ref{C:rising-operator-like formula}) for the Jack symmetric
functions at the end of next section.
\end{remark}

\section{Applications of the differential operator $D(\alpha)$}

 Note that $D(\alpha)$ is
self-adjoint and the bases $q_\la(\alpha)$'s and $m_\la$'s are dual,
most properties involving
 with $D(\alpha)$ and $q_\la(\alpha)$'s can be passed to those about
 $D(\alpha)$ and $m_\la$'s. We list some  of these properties with
proofs omitted.


\medskip





\begin{proposition}\label{P'}
There exists a unique family of rational functions
$\{B_{\la\mu}(\al)|\mu\geq\la\}$ of $\al$ such that

(1) $B_{\la\la}(\al)=1$ for all $\la\in\mathcal{P}$,

(2) $\sum_{\mu\leq\la}B_{\la\mu}(\al)m_\mu$, denoted as
$P'_\la(\al)$, is an eigenvector for $D(\alpha)$  for all
$\la\in\mathcal{P}$.
\end{proposition}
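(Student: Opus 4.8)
The plan is to mirror, on the monomial side, the argument already carried out for the $q_\lambda$-side in Proposition~\ref{Q'}. The key structural facts we need are (i) that $D(\alpha)$ is self-adjoint with respect to the Jack scalar product, and (ii) that $\{q_\lambda\}$ and $\{m_\lambda\}$ are dual bases (Lemma~\ref{L:triangular}), so that the matrix of $D(\alpha)$ in the $m$-basis is the transpose of its matrix in the $q$-basis. Since Corollary~\ref{C:action-of-D} (equivalently Proposition~\ref{Dalphaproposition}) shows $D(\alpha).q_\mu = \sum_{\omega \geq \mu} R_{\mu\omega}(\alpha)\,q_\omega$ with $R_{\mu\mu}(\alpha)=e_\mu(\alpha)$, dualizing gives $D(\alpha).m_\lambda = \sum_{\nu \leq \lambda} R_{\nu\lambda}(\alpha)\,m_\nu$, i.e. $D(\alpha)$ acts \emph{lower}-triangularly on the $m_\lambda$'s in dominance order, again with diagonal entries $e_\lambda(\alpha)$.

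With this triangularity in hand, the construction of $P'_\lambda(\alpha)$ is a descending induction on dominance order entirely parallel to the proof of Proposition~\ref{Q'}. First I would set $B_{\lambda\lambda}(\alpha)=1$; because the coefficient of $m_\lambda$ in $D(\alpha).m_\lambda$ is exactly $e_\lambda(\alpha)$, the coefficient of $m_\lambda$ in $e_\lambda(\alpha)\sum_{\mu\leq\lambda}B_{\lambda\mu}(\alpha)m_\mu - \sum_{\mu\leq\lambda}B_{\lambda\mu}(\alpha)D(\alpha).m_\mu$ vanishes automatically. Then, proceeding down the dominance order, suppose $B_{\lambda\nu}(\alpha)$ has been determined for all $\nu$ with $\mu < \nu \leq \lambda$ so that the coefficient of $m_\nu$ in the above expression is zero; requiring the coefficient of $m_\mu$ to vanish forces
\begin{equation*}
B_{\lambda\mu}(\alpha)=\frac{\sum_{\mu<\nu\leq\lambda}B_{\lambda\nu}(\alpha)R_{\mu\nu}(\alpha)}{e_\lambda(\alpha)-e_\mu(\alpha)},
\end{equation*}
which is well-defined precisely because $e_\lambda(\alpha)\neq e_\mu(\alpha)$ for $\mu<\lambda$ by Lemma~\ref{L:eigenvalue}. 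This determines $B_{\lambda\mu}(\alpha)$ uniquely and simultaneously proves existence; setting $P'_\lambda(\alpha)=\sum_{\mu\leq\lambda}B_{\lambda\mu}(\alpha)m_\mu$, one checks that $D(\alpha).P'_\lambda(\alpha)=e_\lambda(\alpha)P'_\lambda(\alpha)$ by construction, so $P'_\lambda(\alpha)$ is the desired eigenvector.

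The main obstacle — really the only non-routine point — is establishing the transpose relationship cleanly: one must verify that self-adjointness of $D(\alpha)$ together with $\langle q_\mu, m_\nu\rangle = \delta_{\mu\nu}$ genuinely yields $D(\alpha).m_\lambda = \sum_{\nu\leq\lambda} R_{\nu\lambda}(\alpha) m_\nu$ with the \emph{same} lower-triangular support in dominance order. This follows by pairing: $\langle D(\alpha).m_\lambda, q_\mu\rangle = \langle m_\lambda, D(\alpha).q_\mu\rangle = R_{\mu\lambda}(\alpha)$, which is nonzero only when $\lambda \geq \mu$, so expanding $D(\alpha).m_\lambda = \sum_\nu \langle D(\alpha).m_\lambda, q_\nu\rangle\, m_\nu$ gives exactly the claimed sum over $\nu \leq \lambda$. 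Once this is noted, everything else is a verbatim transcription of Proposition~\ref{Q'} with $\geq$ replaced by $\leq$ and the roles of $q$ and $m$ interchanged, so I would present the proof compactly by reference to that earlier argument. (One may additionally remark that $P'_\lambda(\alpha)$ coincides with the Jack function $P_\lambda(\alpha)$, by Lemma~\ref{L:OHJAE} and the duality, but that identification is not needed for the statement as phrased.)
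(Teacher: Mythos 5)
Your proof is correct and follows exactly the route the paper intends: the paper omits the proof of Proposition~\ref{P'}, remarking only that self-adjointness of $D(\alpha)$ together with the duality $\langle q_\mu, m_\nu\rangle=\delta_{\mu\nu}$ transports the $q$-basis statements to the $m$-basis (this is the same duality step the paper later makes explicit in Lemma~\ref{L:actionofD'alpha}). Your dualization $D(\alpha).m_\lambda=\sum_{\nu\leq\lambda}R_{\nu\lambda}(\alpha)m_\nu$ and the descending induction mirroring Proposition~\ref{Q'}, with well-definedness from Lemma~\ref{L:eigenvalue}, fill in precisely the details the paper leaves to the reader.
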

\begin{theorem} \label{Dalphatheorem'}
For $\la=(\la_1,\la_2,\cdots)\in \mathcal P$,
$P'_{\lambda}(\alpha)=P_{\lambda}(\alpha)$ if and only if the
following properties are satisfied:

(1) There are rational functions $D_{\la\mu}(\al)$ of $\alpha$ with
$D_{\la\la}(\al)=1$ such that

$$ P'_\la(\al)=\sum_{\mu\leq\la}D_{\la\mu}(\al)m_\mu,$$

(2) $P'_\lambda(\alpha)$ is an eigenvector for $D(\alpha)$.

\end{theorem}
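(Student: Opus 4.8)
The plan is to prove Theorem \ref{Dalphatheorem'} by a direct analogue of the argument that established Theorem \ref{Dalphatheorem}, but carried out on the $m_\mu$-side rather than the $q_\mu$-side. The key structural fact is that $D(\alpha)$ is self-adjoint with respect to the Jack scalar product, and that $\{q_\la\}$ and $\{m_\la\}$ are dual bases (Lemma \ref{L:triangular}). So everything we know about $D(\alpha)$ acting triangularly ``upward'' on the $q_\la$'s transposes to a triangular ``downward'' action on the $m_\mu$'s. Concretely, from Proposition \ref{Dalphaproposition} one reads off that $D(\alpha).m_\nu = e_\nu(\alpha) m_\nu + \sum_{\mu<\nu} (\ast) m_\mu$ with the same eigenvalue $e_\nu(\alpha)$ appearing on the diagonal, because $\langle D(\alpha).m_\nu, q_\la\rangle = \langle m_\nu, D(\alpha).q_\la\rangle$ and the latter is $e_\la(\alpha)\delta_{\la\nu}$ plus terms indexed by $\la' > \la$. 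This is precisely the lower-triangularity in dominance order recorded in Soko's formula (1.3) in the introduction, now obtained intrinsically from our operator.

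First I would make the ``if'' direction. Suppose $P'_\lambda(\alpha) = \sum_{\mu\le\lambda} D_{\la\mu}(\alpha) m_\mu$ with $D_{\la\la}(\alpha)=1$ is an eigenvector of $D(\alpha)$. Since the diagonal action of $D(\alpha)$ on $m_\mu$ has eigenvalue $e_\mu(\alpha)$, and these eigenvalues are pairwise distinct for distinct partitions of the same weight (Lemma \ref{L:eigenvalue}), the eigenvalue of $P'_\lambda(\alpha)$ must be $e_\lambda(\alpha)$. Now I invoke the orthogonal-triangular characterization of Jack functions from Section 2: $P_\lambda(\alpha)$ is the unique element $\sum_{\mu\le\lambda} c_{\la\mu} m_\mu$ with $c_{\la\la}=1$ that is orthogonal to all $P_\mu(\alpha)$ with $\mu\ne\lambda$. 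So it suffices to show that any monic-triangular eigenvector of $D(\alpha)$ is automatically orthogonal to the others; this follows from self-adjointness: if $D(\alpha).f = e_\lambda f$ and $D(\alpha).g = e_\mu g$ with $e_\lambda\ne e_\mu$, then $(e_\lambda - e_\mu)\langle f,g\rangle = \langle D(\alpha)f,g\rangle - \langle f, D(\alpha)g\rangle = 0$. Applying this with $f=P'_\lambda$ and $g$ running over the family $\{P'_\mu\}$ produced by Proposition \ref{P'}, together with the uniqueness in that Proposition, pins down $P'_\lambda = P_\lambda$. For the ``only if'' direction, $P_\lambda(\alpha)$ by definition has the monic lower-triangular expansion in $m_\mu$, so (1) is immediate; and one must check it is an eigenvector, which is exactly the content of Theorem \ref{Dalphatheorem} transported across duality — or, more directly, since $J_\lambda = h^\lambda_*(\lambda) P_\lambda$ and $J_\lambda = h^*_\lambda(\lambda) Q_\lambda$ are scalar multiples of $Q_\lambda(\alpha)$, which is an eigenvector by Theorem \ref{Dalphatheorem}, so is $P_\lambda(\alpha)$ with the same eigenvalue $e_\lambda(\alpha)$.

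The one point requiring genuine care — the main obstacle — is making the duality transfer fully rigorous: namely verifying that $D(\alpha)$ really does send $m_\nu$ into the span of $\{m_\mu : \mu \le \nu\}$ with diagonal coefficient $e_\nu(\alpha)$, rather than into some larger span. This is where self-adjointness does the work: for $\mu\not\le\nu$ we need $\langle D(\alpha).m_\nu, q_\mu\rangle = \langle m_\nu, D(\alpha).q_\mu\rangle = 0$, and by Proposition \ref{Dalphaproposition}, $D(\alpha).q_\mu$ is a combination of $q_\mu$ and terms $q_\eta$ with $\eta > \mu$ (strictly larger in dominance because the raising operators $R_{ij}$ strictly increase in dominance order); pairing against $m_\nu$ gives $0$ unless $\nu = \mu$ or $\nu = \eta > \mu$ for some such $\eta$, i.e.\ unless $\nu \ge \mu$. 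Equivalently $\langle D(\alpha).m_\nu, q_\mu\rangle \ne 0$ forces $\mu \le \nu$, which is the claim. Once this is in place, Proposition \ref{P'} (whose proof is the verbatim dominance-induction of Proposition \ref{Q'}, solving $e_\la(\alpha)D_{\la\mu}(\alpha) = \sum_{\mu\le\nu\le\la} D_{\la\nu}(\alpha) \widetilde R_{\nu\mu}(\alpha)$ with $\widetilde R$ the lower-triangular matrix of $D(\alpha)$ on the $m$-basis, using $e_\la(\alpha)\ne e_\mu(\alpha)$ from Lemma \ref{L:eigenvalue}) supplies the unique monic triangular eigenvector, and the argument above identifies it with $P_\lambda(\alpha)$. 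I would present the proof in this order: (a) derive lower-triangularity of $D(\alpha)$ on $m$'s from self-adjointness and Proposition \ref{Dalphaproposition}; (b) note the diagonal eigenvalue is $e_\nu(\alpha)$; (c) invoke Proposition \ref{P'} for existence/uniqueness; (d) close the loop with the self-adjointness orthogonality argument and the defining property of $P_\lambda$.
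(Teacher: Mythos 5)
Your proposal is correct and follows exactly the route the paper indicates: the paper states this theorem with proof omitted, remarking only that since $D(\alpha)$ is self-adjoint and $\{q_\la\}$, $\{m_\la\}$ are dual bases, the $q$-side results (Theorem \ref{Dalphatheorem}, Propositions \ref{Dalphaproposition} and \ref{Q'}) transfer to the $m$-side. Your write-up simply makes that duality transfer explicit (the lower-triangularity you derive is the same fact recorded in Lemma \ref{L:actionofD'alpha}), and the uniqueness appeal to Proposition \ref{P'} closes the argument just as intended.
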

\medskip

We know that the symmetric functions constructed in Proposition
\ref{Q'} and \ref{P'} are $Q_\lambda(\alpha)$ and
$P_\lambda(\alpha)$ respectively, thus they are dual to each other.
We can also prove this directly from their constructions.
\begin{proposition}
We have $\langle
P'_\la(\alpha),Q'_\mu(\alpha)\rangle=\delta_{\la,\mu}$.
\end{proposition}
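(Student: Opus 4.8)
The plan is to exploit the fact, already established in Propositions \ref{Q'} and \ref{P'}, that $Q'_\mu(\alpha)$ and $P'_\lambda(\alpha)$ are \emph{eigenvectors} of the self-adjoint operator $D(\alpha)$, together with the uniqueness built into their defining expansions. First I would record the relevant eigenvalues: by the argument in the proof of Proposition \ref{Q'}, the eigenvalue attached to $Q'_\mu(\alpha)$ is $e_\mu(\alpha)$ (since the coefficient of $q_\mu$ is normalized to $1$ and the $q_\mu$-coefficient of $D(\alpha).q_\mu$ is $R_{\mu\mu}(\alpha)=e_\mu(\alpha)$); the same reasoning gives that $P'_\lambda(\alpha)$ has eigenvalue $e_\lambda(\alpha)$, using that $D(\alpha)$ acts triangularly on the $m_\nu$ basis (the dual statement to Proposition \ref{Dalphaproposition}, obtained by self-adjointness). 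Then the standard orthogonality-of-eigenvectors argument applies: for $\lambda\neq\mu$,
\begin{align*}
e_\lambda(\alpha)\langle P'_\lambda(\alpha),Q'_\mu(\alpha)\rangle
&=\langle D(\alpha).P'_\lambda(\alpha),Q'_\mu(\alpha)\rangle\\
&=\langle P'_\lambda(\alpha),D(\alpha).Q'_\mu(\alpha)\rangle
=e_\mu(\alpha)\langle P'_\lambda(\alpha),Q'_\mu(\alpha)\rangle,
\end{align*}
and since $e_\lambda(\alpha)\neq e_\mu(\alpha)$ whenever $\lambda\neq\mu$ (this is Lemma \ref{L:eigenvalue}, valid in particular when $\lambda$ and $\mu$ are comparable; when they are incomparable the pairing vanishes anyway for degree reasons unless $|\lambda|=|\mu|$, and then one still has $e_\lambda\neq e_\mu$ by the same lemma applied after refining to reverse lexicographic order, or simply because distinct partitions of the same weight give distinct eigenvalues), we conclude $\langle P'_\lambda(\alpha),Q'_\mu(\alpha)\rangle=0$.

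It remains to handle the diagonal case $\lambda=\mu$, i.e.\ to show $\langle P'_\lambda(\alpha),Q'_\lambda(\alpha)\rangle=1$. Here I would use the triangularity of both expansions together with the duality $\langle q_\nu,m_\xi\rangle=\delta_{\nu\xi}$ from Lemma \ref{L:triangular}. Writing $P'_\lambda(\alpha)=\sum_{\nu\leq\lambda}D_{\lambda\nu}(\alpha)m_\nu$ with $D_{\lambda\lambda}=1$ and $Q'_\lambda(\alpha)=\sum_{\xi\geq\lambda}C_{\lambda\xi}(\alpha)q_\xi(\alpha)$ with $C_{\lambda\lambda}=1$, we get
\begin{equation*}
\langle P'_\lambda(\alpha),Q'_\lambda(\alpha)\rangle=\sum_{\nu\leq\lambda}\sum_{\xi\geq\lambda}D_{\lambda\nu}(\alpha)C_{\lambda\xi}(\alpha)\langle m_\nu,q_\xi\rangle=\sum_{\nu\leq\lambda}\sum_{\xi\geq\lambda}D_{\lambda\nu}(\alpha)C_{\lambda\xi}(\alpha)\delta_{\nu\xi}.
\end{equation*}
The only index contributing to the sum is $\nu=\xi$, which forces $\nu=\xi=\lambda$ (the only partition that is simultaneously $\leq\lambda$ and $\geq\lambda$), so the sum collapses to $D_{\lambda\lambda}(\alpha)C_{\lambda\lambda}(\alpha)=1$.

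The main obstacle, and the only genuinely delicate point, is the non-degeneracy of eigenvalues in the off-diagonal case: one must be sure that $\langle P'_\lambda(\alpha),Q'_\mu(\alpha)\rangle$ really is forced to vanish for \emph{all} $\lambda\neq\mu$, not merely for comparable pairs. For $|\lambda|\neq|\mu|$ this is immediate since $D(\alpha)$ preserves each graded piece $\Lambda_F(m)$ and the pairing already vanishes. For $|\lambda|=|\mu|$ with $\lambda,\mu$ incomparable, I would note that the eigenvalue $e_\lambda(\alpha)=\alpha^2\sum_i\lambda_i^2+\alpha(|\lambda|-2\sum_i i\lambda_i)$ is, for generic $\alpha$, an injective function of the partition (the quantities $\sum_i\lambda_i^2$ and $\sum_i i\lambda_i$ together determine $\lambda$ among partitions of fixed weight in the cases that arise, and more robustly $e_\lambda\neq e_\mu$ as rational functions of $\alpha$ whenever $\lambda\neq\mu$ — this is exactly the content one extracts from Lemma \ref{L:eigenvalue} together with the observation that the leading $\alpha^2$-coefficient $\sum_i\lambda_i^2$ is strictly monotone along dominance). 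Hence the Hermitian-style argument above applies verbatim to every pair $\lambda\neq\mu$, and combining the off-diagonal vanishing with the diagonal computation gives $\langle P'_\lambda(\alpha),Q'_\mu(\alpha)\rangle=\delta_{\lambda,\mu}$, as claimed.
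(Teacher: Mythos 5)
Your diagonal computation and your treatment of comparable pairs match the paper's proof, but there is a genuine gap in the incomparable case, and it sits exactly at the point you flagged as ``the only genuinely delicate point.'' Your claim that $e_\la(\al)\neq e_\mu(\al)$ for \emph{all} distinct partitions of the same weight is false. Lemma \ref{L:eigenvalue} only asserts this for comparable pairs $\la>\mu$, and it cannot be extended: take $\la=(4,1,1)$ and $\mu=(3,3)$, which are incomparable in dominance order (partial sums $4,5,6$ versus $3,6,6$). Then $\sum_i\la_i^2=\sum_i\mu_i^2=18$ and $\sum_i i\la_i=\sum_i i\mu_i=9$, so
\begin{equation*}
e_{(4,1,1)}(\al)=e_{(3,3)}(\al)=18\al^2-12\al
\end{equation*}
identically as polynomials in $\al$. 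Hence the eigenvalue map is \emph{not} injective on partitions of fixed weight, the monotonicity of $\sum_i\la_i^2$ along dominance says nothing about incomparable pairs, and the self-adjointness argument gives no information whatsoever about $\langle P'_{(4,1,1)},Q'_{(3,3)}\rangle$. Your proof as written does not establish the off-diagonal vanishing in this case.

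The paper closes this case by a support argument rather than an eigenvalue argument: if $\la$ and $\mu$ are incomparable, there is no partition $\nu$ with $\nu\leq\la$ and $\nu\geq\mu$, since that would force $\mu\leq\nu\leq\la$ and hence $\mu\leq\la$, a contradiction. Since $P'_\la=\sum_{\nu\leq\la}D_{\la\nu}m_\nu$ and $Q'_\mu=\sum_{\xi\geq\mu}C_{\mu\xi}q_\xi$ with $\langle m_\nu,q_\xi\rangle=\delta_{\nu\xi}$, every term of the one expansion is orthogonal to every term of the other, and the pairing vanishes term by term. Inserting this two-line argument in place of your appeal to injectivity of the eigenvalues repairs the proof; the rest of what you wrote is correct and follows the paper's route.
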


\begin{proof}
If $\la=\mu$, $\langle
P'_\la(\alpha),Q'_\mu(\alpha)\rangle=1$ for that $\langle
m_\la,q_\mu(\alpha)\rangle=\delta_{\la,\mu}$. If $\la\neq\mu$,
consider two cases. In the first case that $\la$ and $\mu$ are
comparable, we have $e_\la(\al)\neq e_\mu(\al)$, and thus
\begin{align*}
&e_\la(\al)\langle P'_\la(\alpha),Q'_\mu(\alpha)\rangle=\langle e_\la(\al)P'_\la(\alpha),Q'_\mu(\alpha)\rangle\\
&=\langle D(\al).P'_\la(\alpha),Q'_\mu(\alpha)\rangle=\langle P'_\la(\alpha),D(\al).Q'_\mu(\alpha)\rangle\\
&=e_\mu(\al)\langle P'_\la(\alpha),Q'_\mu(\alpha)\rangle.
\end{align*}
Hence $\langle P'_\la(\alpha),Q'_\mu(\alpha)\rangle=0$ is immediate.

In the second case that $\la$ and $\mu$ are incomparable, we do not
have a $\nu$ such that $\nu\leq\la$ and $\nu\geq\mu$. Because that
will lead to $\la\geq\mu$ which is a contradiction. Thus every term
in $P'_\la(\alpha)$ is orthogonal to the terms in $Q'_\mu(\alpha)$,
and $\langle P'_\la(\alpha),Q'_\mu(\alpha)\rangle=0$.
\end{proof}

\subsection{On the action of the Virasoro algebra}
The Virasoro algebra is the infinite dimensional Lie algebra
generated by $L_n$ and the central element $c$ subject to the relations:
\begin{align} \label{F:Vir}
[L_m, L_n]&=(m-n)L_{m+n}+\frac{m^3-1}{12}\delta_{m, -n}c, \\
[L_n, c]&=0.
\end{align}
The Feigin-Fuchs realization of Virasoro algebra on $\Lambda$ can be
formulated as follows.
\begin{align*}
&L_n=\frac{1}{2}\sum_{m\in\mathbb{Z}}:a_{n-m}a_m:-\al_0(n+1)a_n,\\
&c=1-12\al_0^2=13-6(\al+\frac{1}{\al}),\\
\end{align*}
where the Heisenberg generators $a_n$ are given by
\begin{align*}
&a_{-m}=(-1)^{m-1}\frac{1}{\sqrt{2\al}}h_{-m}, \qquad m>0,\\
&a_m=(-1)^{m-1}\sqrt{\frac{2}{\al}}h_m, \qquad m>0,\\
&a_0=\al'Id,\\
& \al_0=(\frac{\sqrt{2\al}}{2}-\frac{1}{\sqrt{2\al}}),
\end{align*}
where we assume $\alpha>0$.
The new operators satisfy the standard relations:
\begin{equation}
[a_m, a_n]=m\delta_{m, -n}I.
\end{equation}

For $n>0$, we have $L_{n+2}=(-1)^n(n!)^{-1}(adL_1)^nL_2$ by
(\ref{F:Vir}), thus to know the action of $L_n$ we only need to know
that of $L_1$ and $L_2$, which is explicitly given as follows:
\begin{align}\label{F:L1}
&L_1=-\al^{-1}M_1+\big(\al'\sqrt{\frac 2{\al}}-2(1-\frac{1}{\al})\big)h_1,\\
\label{F:L2}
&L_2=\al^{-1}M_2+(3\al_0-\al')\sqrt{\frac 2{\al}}h_2+\al^{-1}h_1h_1,
\end{align}
where for non-negative integer $n$, we define
\begin{align*}
M_{-n}&=\sum_{i\geq1}h_{-i-n}h_i,\\
M_n&=\sum_{i\geq1}h_{-i}h_{i+n},
\end{align*}
thus
$M_{-n}^*=M_n$ for $n\in\mathbb{Z}$.

Note that we have $h_{2}=\al^{-1}(J_2^*-h_{1}^2)$ and $h_{1}=J_1^*$,
where $J_n^*$ is the conjugate of $J_n$, which can be taken as a
multiplication operator.  The action of $J_n$ on Jack functions,
known as Pieri formula, was discovered by Stanley. Also note that
$L_0$ is a scalar multiplier on each $\Lambda_F(m)$,
$L_{-(n+2)}=n!)^{-1}(adL_{-1})^n.L_{-2}$, and $h_{-n}^*=h_n$, to
know the action of Virasoro algebra on Jack functions, we only need
to know those of $M_1$ and $M_2$. This is given by the following
proposition.

\begin{proposition}\label{P:M1M2}
For any pair of partitions $\mu,\la$, we have
\begin{align}\label{F:M1}
\langle M_1.J_\la,J_\mu\rangle
&=(2\al)^{-1}(e_\la(\al)-e_\mu(\al)-e_{(1)}(\al))\langle
J_1^*.J_\la,J_\mu\rangle,\\ \label{F:M2} \langle
M_2.J_\la,J_\mu\rangle
&=(4\al^2)^{-1}(e_\la(\al)-e_\mu(\al)-e_{(2)}(\al))\langle
J_2^*.J_\la,J_\mu\rangle\\
&\qquad\qquad\hskip 1.5in -\al^{-1}\langle
M_1J_1^*.J_\la,J_\mu\rangle. \nonumber
\end{align}
\end{proposition}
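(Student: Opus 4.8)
The plan is to deduce both identities from the single structural fact that $D(\al)$ is self-adjoint and satisfies $D(\al).J_\la=e_\la(\al)J_\la$, by commuting $D(\al)$ with the multiplication operators $J_1$ and $J_2$ and then reading off matrix elements against Jack functions. The starting computation is the commutator
\[
[D(\al),h_{-n}]=n\al\sum_{i=1}^{n-1}h_{-i}h_{-(n-i)}+2n\al M_{-n}+n^2\al(\al-1)h_{-n},
\]
obtained term by term from the Heisenberg relations $[h_m,h_n]=m\al\delta_{m,-n}$ applied to the three summands of $D(\al)=\sum h_{-i}h_{-j}h_{i+j}+\sum h_{-(i+j)}h_ih_j+(\al-1)\sum ih_{-i}h_i$; in particular $[D(\al),h_{-1}]=2\al M_{-1}+\al(\al-1)h_{-1}$ and $[D(\al),h_{-2}]=2\al h_{-1}^2+4\al M_{-2}+4\al(\al-1)h_{-2}$. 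Since $J_1=p_1=h_{-1}$ and $J_2=p_1^2+\al p_2=h_{-1}^2+\al h_{-2}$ as multiplication operators, and $\al(\al-1)=e_{(1)}(\al)$, the first of these is already the clean operator identity $[D(\al),J_1]=2\al M_{-1}+e_{(1)}(\al)J_1$.

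For $M_2$ I would assemble $[D(\al),J_2]=[D(\al),h_{-1}^2]+\al[D(\al),h_{-2}]$, expanding $[D(\al),h_{-1}^2]=h_{-1}[D(\al),h_{-1}]+[D(\al),h_{-1}]h_{-1}$ and using the elementary identity $[M_{-1},h_{-1}]=\al h_{-2}$ (again immediate from the Heisenberg relations) to reorder the $M_{-1}h_{-1}$ term. Collecting everything, the pure multiplication contributions (the $4\al^2(\al-1)h_{-2}$ coming from $\al[D(\al),h_{-2}]$, together with the $h_{-2}$- and $h_{-1}^2$-terms produced by $[D(\al),h_{-1}^2]$ and by $\al\cdot2\al h_{-1}^2$) recombine through $\al h_{-2}+h_{-1}^2=J_2$ into a single multiple of $J_2$, whose coefficient collapses to exactly $e_{(2)}(\al)=4\al^2-2\al$; this gives the operator identity $[D(\al),J_2]=4\al^2M_{-2}+4\al J_1M_{-1}+e_{(2)}(\al)J_2$.

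With these two identities the proof is short. Rewrite $2\al M_{-1}=[D(\al),J_1]-e_{(1)}(\al)J_1$ and $4\al^2M_{-2}=[D(\al),J_2]-4\al J_1M_{-1}-e_{(2)}(\al)J_2$, apply each side to $J_\mu$, and pair with $J_\la$. Self-adjointness of $D(\al)$ together with $D(\al).J_\la=e_\la(\al)J_\la$ and $D(\al).J_\mu=e_\mu(\al)J_\mu$ gives $\langle[D(\al),J_n].J_\mu,J_\la\rangle=(e_\la(\al)-e_\mu(\al))\langle J_nJ_\mu,J_\la\rangle$. Next, $\langle M_nJ_\la,J_\mu\rangle=\langle M_{-n}J_\mu,J_\la\rangle$ since $M_{-n}^*=M_n$ and the Jack pairing is symmetric, while $\langle J_nJ_\mu,J_\la\rangle=\langle J_\mu,J_n^*.J_\la\rangle=\langle J_n^*.J_\la,J_\mu\rangle$; and for the cross term $\langle J_1M_{-1}J_\mu,J_\la\rangle=\langle M_{-1}J_\mu,J_1^*.J_\la\rangle=\langle J_\mu,M_1J_1^*.J_\la\rangle=\langle M_1J_1^*.J_\la,J_\mu\rangle$, using $J_1^*=h_1$ and $M_{-1}^*=M_1$. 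Substituting these into the two rewritten identities produces (\ref{F:M1}) and (\ref{F:M2}) exactly.

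The only delicate point I anticipate is the bookkeeping in the second paragraph: one must track operator orderings carefully when expanding $[D(\al),h_{-1}^2]$ and verify that the several $h_{-2}$- and $h_{-1}^2$-multiplication contributions cancel down to precisely $e_{(2)}(\al)J_2$ with no leftover lower-order multiplication term. It is exactly this cancellation that pins down the factor $(4\al^2)^{-1}$ in front of $\langle J_2^*.J_\la,J_\mu\rangle$ and forces the correction term to be exactly $-\al^{-1}\langle M_1J_1^*.J_\la,J_\mu\rangle$ and not some other multiple; everything else is formal manipulation with adjoints and the eigenvalue equation.
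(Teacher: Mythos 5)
Your proof is correct, and it is essentially the paper's own argument in commutator packaging: the operator identities $[D(\al),J_1]=2\al M_{-1}+e_{(1)}(\al)J_1$ and $[D(\al),J_2]=4\al^2M_{-2}+4\al J_1M_{-1}+e_{(2)}(\al)J_2$, applied to $J_\mu$, are exactly the paper's Leibniz-type expansion $D(\al).(J_\mu J_n)=(D(\al).J_\mu)J_n+J_\mu(D(\al).J_n)+2\sum_{j\ge1}(M_{-j}.J_\mu)(h_j.J_n)$ specialized to $n=1,2$, and the concluding step (self-adjointness of $D(\al)$, the eigenvalue equation, and $M_{-n}^*=M_n$) is identical. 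The only cosmetic difference is that you re-derive the coefficient $e_{(2)}(\al)$ of $J_2$ by direct cancellation, where the paper simply invokes $D(\al).J_n=e_{(n)}(\al)J_n$ together with $h_j.J_n=\frac{n!}{(n-j)!}\al^jJ_{n-j}$.
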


\begin{proof}
As in the proof of Proposition \ref{Dalphaproposition},
we have
\begin{align*}
D(\alpha).(J_\mu
J_\nu)&=(D(\alpha).J_\mu)J_\nu+J_\mu(D(\alpha).J_\nu)\\
&\hskip 1in +2\sum_{j\geq1}(M_{-j}.J_\mu)(h_j.J_\nu).
\end{align*}
Notice that $h_n.J_m(\alpha)=\frac{m!}{(m-n)!}\al^nJ_{m-n}(\al)$,
and that $D(\al).J_\la=e_\la(\al)J_\la$ for any partition $\la$.
Combining these into the case of $n=1,2$ in the following equation
finishes the proof:
$$\langle D(\alpha).(J_\mu J_n),J_\la\rangle=\langle J_\mu
J_n,D(\alpha).J_\la\rangle.$$
\end{proof}

\begin{remark}
 In \cite{SSAFR}, the action of $L_n$ on Jack function was conjectured in
an iterative formular. Our action of $M_1$ and $M_2$ given
in Proposition \ref{P:M1M2} \label{P:M1M2} partly confirms 
their formula.
\end{remark}

It is known \cite{FF} that the subspace spanned by the singular vectors of fixed degree
is of dimension one, thus the following recovers the result of
\cite{MY}.
\begin{corollary}
For a partition $\la$, $J_\la$ is a singular vector of the
representation of the Virasoro algebra if and only if $\la=(r^s)$
and $\al'=(r+1)\sqrt{2\al}/2-(1+s)/\sqrt{2\al}$ for some pair of
positive integers $(r,s)$.
\end{corollary}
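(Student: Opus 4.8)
The plan is to translate the statement ``$J_\la$ is a singular vector'' into conditions on the action of $L_1$ and $L_2$, and then to extract the precise constraints on $\la$ and $\al'$ using the formulas of Proposition~\ref{P:M1M2}. Since $L_0$ acts as a scalar on each $\Lambda_F(m)$ and the raising operators $L_{-(n+2)}$ are obtained from $L_{-1},L_{-2}$ by brackets, a vector $v$ of fixed degree is singular (highest weight for the positive part of the Virasoro algebra) if and only if $L_1.v=0$ and $L_2.v=0$; this is because $L_{n+2}=(-1)^n(n!)^{-1}(\mathrm{ad}\,L_1)^nL_2$ for $n>0$, as recalled in the excerpt. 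So the first step is to reduce the problem to solving the two equations $L_1.J_\la=0$ and $L_2.J_\la=0$ simultaneously, using \eqref{F:L1} and \eqref{F:L2}.

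Next I would test these equations against the basis $\{J_\mu\}$ via the inner product, i.e.\ compute $\langle L_1.J_\la,J_\mu\rangle$ and $\langle L_2.J_\la,J_\mu\rangle$ for all $\mu$. Plugging \eqref{F:L1}, \eqref{F:L2} and the definitions $h_1=J_1^*$, $h_2=\al^{-1}(J_2^*-h_1^2)$ into these pairings and then invoking \eqref{F:M1}--\eqref{F:M2} to rewrite $\langle M_1.J_\la,J_\mu\rangle$ and $\langle M_2.J_\la,J_\mu\rangle$ in terms of Pieri-type matrix coefficients $\langle J_1^*.J_\la,J_\mu\rangle$ and $\langle J_2^*.J_\la,J_\mu\rangle$, the vanishing condition becomes: for every $\mu$ with $\la/\mu$ a horizontal $1$-strip (resp.\ $2$-strip), a certain rational expression in $e_\la(\al)-e_\mu(\al)$, $\al$, and $\al'$ multiplied by a nonzero Stanley coefficient must vanish. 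Since those Pieri coefficients are nonzero exactly when $\la/\mu$ is the appropriate strip (Theorem~\ref{T:Stanley}), the singular-vector condition is equivalent to a system of scalar equations indexed by the covers of $\la$ (and the degree-$2$ moves), one equation per admissible $\mu$.

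The core of the argument is then a combinatorial/algebraic analysis of this system. The idea is that $L_1.J_\la=0$ forces, for each square $s$ that can be removed from $\la$ (equivalently each inner corner), the eigenvalue difference $e_\la(\al)-e_{\la\setminus s}(\al)-e_{(1)}(\al)$ to take a value that cancels the linear $h_1$ term governed by $\al'$; requiring the same value of $\al'$ to work for \emph{all} inner corners of $\la$ forces $\la$ to have a single inner corner, which already pins $\la$ down to a rectangle $(r^s)$. One then reads off $\al'$ from the single surviving equation, obtaining $\al'=(r+1)\sqrt{2\al}/2-(1+s)/\sqrt{2\al}$, and finally one checks that with $\la=(r^s)$ and this value of $\al'$ the degree-$2$ equations coming from $L_2.J_\la=0$ are automatically satisfied (this is where the identity $L_2\sim(\mathrm{ad}\,L_1)$-closure and the explicit $e_{(r^s)}(\al)$ make everything consistent). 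Conversely, for $\la=(r^s)$ with that $\al'$ one verifies directly that both $L_1.J_\la=0$ and $L_2.J_\la=0$, hence $J_\la$ is singular; uniqueness of the singular vector in each degree is quoted from \cite{FF}, which is what lets us conclude these are \emph{the} singular vectors and recover \cite{MY}.

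The main obstacle I anticipate is the bookkeeping in the equivalence ``one common value of $\al'$ for all inner corners $\Rightarrow$ $\la$ is a rectangle.'' Concretely, writing $c(s)$ for the content-type quantity entering $e_\la(\al)-e_{\la\setminus s}(\al)$, one must show that the map $s\mapsto$ (the $\al'$ solving the $\mu=\la\setminus s$ equation) is injective on inner corners unless there is only one inner corner, i.e.\ unless $\la$ is rectangular; this requires carefully expanding $e_\la(\al)=\al^2\sum\la_i^2+\al(|\la|-2\sum i\la_i)$ at a removed corner and comparing the resulting affine-in-$\al'$ expressions. The degree-$2$ consistency check (that $L_2.J_{(r^s)}=0$ follows once $L_1.J_{(r^s)}=0$, given the rectangular shape) is the second delicate point, but it should follow formally from $[L_1,[L_1,L_2]]$-type relations together with the fact that $L_0$ already acts as a scalar, so no genuinely new computation is needed beyond organizing the Virasoro brackets.
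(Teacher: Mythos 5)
Your reduction to $L_1.J_\la=0$ and $L_2.J_\la=0$, and your necessity argument, follow the paper's proof essentially verbatim: the paper writes $\langle L_1.J_\la,J_\mu\rangle=\phi(\la,\mu,\al')\langle J_1^*.J_\la,J_\mu\rangle$ with $\phi$ affine in $\al'$ through $e_\la(\al)-e_\mu(\al)-e_{(1)}(\al)$, observes via Theorem \ref{T:Stanley} that the $\mu$ with nonzero Pieri coefficient are exactly those obtained by deleting a removable corner, and uses $e_{\mu^1}(\al)\neq e_{\mu^2}(\al)$ for two distinct corners to conclude that a non-rectangular $\la$ admits no common $\al'$. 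That is exactly your ``injectivity on inner corners'' step, and the worry you raise about it is easily discharged: $\phi$ has nonzero slope $\sqrt{2/\al}$ in $\al'$, so distinct values of $e_\mu(\al)$ give distinct roots $\al'$.

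The genuine gap is in the sufficiency step. You claim that $L_2.J_{(r^s)}=0$ ``follows formally from $[L_1,[L_1,L_2]]$-type relations'' once $L_1.J_{(r^s)}=0$, so that ``no genuinely new computation is needed.'' This is false: the brackets $(\mathrm{ad}\,L_1)^nL_2$ only produce the higher modes $L_3,L_4,\dots$ from $L_1$ \emph{and} $L_2$; the Lie subalgebra generated by $L_1$ alone is just $\mathbb{C}L_1$, so $L_2$ is an independent generator of the positive part and its vanishing on $J_{(r^s)}$ is a separate, nontrivial condition (generic vectors killed by $L_1$ are not killed by $L_2$). The paper accordingly performs a second explicit computation, expanding $L_2$ via \eqref{F:L2}, rewriting $\langle M_2.J_{(r^s)},J_\mu\rangle$ with \eqref{F:M2} and \eqref{F:M1}, and applying the Pieri rule for both $J_1^*$ and $J_2^*$; this check cannot be skipped. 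An alternative that would avoid the computation is to combine the one-dimensionality of the space of singular vectors of fixed degree from \cite{FF} with a proof that the kernel of $L_1$ on the relevant graded piece is itself one-dimensional, but that is a different (and also nontrivial) argument, not the bracket identity you invoke.
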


\begin{proof}
To prove the necessity, we have
$$\langle L_1.J_\la,J_\mu\rangle=\phi(\la,\mu,\al')\langle
J_1^*.J_\la,J_\mu\rangle,$$ where
$\phi(\la,\mu,\al')=-(2\al^2)^{-1}(e_\la(\al)-e_\mu(\al)-e_{(1)}(\al))+(\al'\sqrt{2/\al}-2(1-1/\al))$.
Note that the Jack functions in the expansion of $J_1^*.J_\la$ are
labeled by partitions coming from $\la$ by removing one of the
square. If $\la$ is not of rectangular shape, there would be at
least two such partitions, say $\mu^1$ and $\mu^2$ with
$\mu^2<\mu^1$. Thus $\phi(\la,\mu^1,\al')\neq\phi(\la,\mu^2,\al')$,
and at least one of $\langle L_1.J_\la,J_\mu^1\rangle$ or $\langle
L_1.J_\la,J_\mu^2\rangle$ is non-zero, which means $J_\la$ is not a
singular vector. This proves that if $J_\la$ is a singular vector,
$\la=(r^s)$ for a pair of positive integers $(r,s)$. We then compute
the action of $L_1$ on $J_{(r^s)}$ using \ref{F:L1}, \ref{F:M1}, and
Pieri formula \cite{S} for the action of $J_1^*$. After some
computation, one can see that the unique value of $\al'$ that makes
$L_1.J_{(r^s)}$ vanished is $(r+1)\sqrt{2\al}/2-(1+s)/\sqrt{2\al}$ .

To prove the sufficiency, we only need to verify that both the
actions of $L_1$ and $L_2$ on $J_{(r^s)}$ lead to zero. The
computation about $L_1$ is already done above, while the action of
$L_2$ invokes \ref{F:L2}, \ref{F:M1}, \ref{F:M2}, and Pieri formula
for the action of $J_1^*$ and $J_2^*$. We can show that
$L_2.J_{(r^s)}$ also vanishes.
\end{proof}

\subsection{Determinant formulae for Jack symmetric functions}
We use the proof of Theorem \ref{Dalphatheorem} to find new
determinant formulae for Jack symmetric functions. First, note that
the operator $\sum_{k\geq 1}h_{-k}h_k$ is a scalar multiplier on
each $\Lambda(m)$, for simplicity we can replace the operator
$D(\alpha)$ with $D'(\alpha)=(2\alpha)^{-1}D(\alpha)-\sum_{k\geq
1}h_{-k}h_k$, we then have
$$D'(\alpha).Q_\lambda(\alpha)=e'_\lambda(\alpha)Q_\lambda(\alpha),$$
where
\begin{equation}\label{eq:eprime}
e'_\lambda(\alpha)=\frac{1}{2}\alpha\sum_i\lambda_i^2-\sum_ii\lambda_i
\end{equation}
for $\lambda=(\lambda_1,\lambda_2,\cdots)$. Note that $\la>\mu$
implies that $e'_\la(\al)-e'_\mu(\al)$ is of the form $a\al+b$, with
$a,b\in\mathbb{Z}_{>0}$. Next, we notice that the action of
$D(\alpha)$ or $D'(\alpha)$ on $q_\la(\alpha)$ can be refined. For
this purpose we have the following.

\begin{definition}\label{D:risingfiltration}
Let $\lambda=(\lambda_1,\lambda_2,\cdots,\lambda_s)$ be a partition,
assume that $i<j$ and $\lambda_j\geq k>0$, we define the action of
$r_j^i(k)$ on $\lambda$ as 
moving $k$ squares from the $j$th row to the $i$th row, then
rearranging the rows in decreasing order to get a new partition, i.e.
$r_j^i(k).\lambda$ is the rearrangement of
$(\lambda_1,\cdots,\lambda_i+k,\cdots,\lambda_j-k,\cdots,\lambda_s)$
in decreasing order. We call $r_j^i(k)$ a {\it moving up operator} for
$\lambda$, and define the moving up of $\lambda$ as the set
$$M^*(\lambda)=\{r_j^i(k).\lambda| r_j^i(k)~\mbox{is a moving up operator
of}~ \lambda\}.$$ We also define the moving down of $\mu$ as the set
$M_*(\mu)=\{\lambda| \mu\in M^*(\lambda)\}.$
\end{definition}

\medskip

For later use, we would like to give a refinement of Proposition
\ref{Dalphaproposition} using the deformation $D'(\alpha)$ of
$D(\alpha)$.

\begin{lemma}\label{L:actionofD'alpha}
Let $\lambda=(\lambda_1,\lambda_2,\cdots)$, we have
\begin{align*}
&D'(\alpha).q_\lambda(\alpha)=\sum_{\mu}r_{\lambda\mu}q_\mu(\alpha),\\
&D'(\alpha).m_\mu=\sum_{\la}r_{\lambda\mu}m_\lambda,
\end{align*}
where for $\mu=r_j^i(k).\lambda$,
$$r_{\lambda\mu}=(1+\delta_{\lambda_i\lambda_j})^{-1}m_{\lambda_i}(\la)
(m_{\lambda_j}(\la)-\delta_{\lambda_i\lambda_j})(\lambda_i-\lambda_j+2k),$$
 $r_{\nu\nu}=e'_\nu(\alpha)$ and $r_{\lambda\mu}=0$ otherwise.
\end{lemma}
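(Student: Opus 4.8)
The plan is to derive Lemma~\ref{L:actionofD'alpha} directly from Proposition~\ref{Dalphaproposition}, simply by regrouping the terms of $D(\alpha).q_\lambda$ according to which partition $\mu\in M^*(\lambda)$ they land on, and then passing to the $m_\mu$ statement by self-adjointness. Recall from Corollary~\ref{C:action-of-D} that
\begin{align*}
D(\alpha).q_\lambda=e_\lambda(\alpha)q_\lambda+2\alpha\sum_{1\le i<j\le l(\lambda)}\sum_{k\ge 1}(\lambda_i-\lambda_j+2k)\,q_{\lambda_1}\cdots q_{\lambda_i+k}\cdots q_{\lambda_j-k}\cdots .
\end{align*}
The point is that the monomial $q_{\lambda_1}\cdots q_{\lambda_i+k}\cdots q_{\lambda_j-k}\cdots$, once its parts are sorted into decreasing order, equals $q_{r_j^i(k).\lambda}$; but several different choices of the index pair $(i,j)$ and shift $k$ can produce the \emph{same} sorted partition $\mu=r_j^i(k).\lambda$, so the coefficient $r_{\lambda\mu}$ is a sum of contributions $2\alpha(\lambda_i-\lambda_j+2k)$ over all such representations. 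First I would fix $\lambda$ and $\mu\in M^*(\lambda)$ and count exactly which triples $(i,j,k)$ give $r_j^i(k).\lambda=\mu$: if we are moving $k$ boxes from a row of length $\lambda_j$ to a row of length $\lambda_i$, then the distinct \emph{values} of the source row is what matters (there are $m_{\lambda_j}(\lambda)$ rows of that length) and likewise $m_{\lambda_i}(\lambda)$ choices for the target, with the case $\lambda_i=\lambda_j$ requiring the usual $(1+\delta_{\lambda_i\lambda_j})^{-1}$ correction to avoid double-counting the unordered choice of two equal rows. Multiplying the multiplicity count $(1+\delta_{\lambda_i\lambda_j})^{-1}m_{\lambda_i}(\lambda)(m_{\lambda_j}(\lambda)-\delta_{\lambda_i\lambda_j})$ by the common summand value $(\lambda_i-\lambda_j+2k)$ reproduces the claimed $r_{\lambda\mu}$ (the $2\alpha$ being absorbed when we pass from $D(\alpha)$ to $D'(\alpha)$).

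Second, I would handle the diagonal and the normalization. The term $e_\lambda(\alpha)q_\lambda$ contributes $r_{\lambda\lambda}$; applying the definition $D'(\alpha)=(2\alpha)^{-1}D(\alpha)-\sum_{k\ge 1}h_{-k}h_k$ and formula~\eqref{eq:eprime} for $e'_\lambda(\alpha)$, one checks that the overall scaling $(2\alpha)^{-1}$ turns $2\alpha(\lambda_i-\lambda_j+2k)$ into $(\lambda_i-\lambda_j+2k)$ in the off-diagonal coefficients and turns $e_\lambda(\alpha)$ into $e'_\lambda(\alpha)$ on the diagonal, while the subtracted scalar operator only shifts the diagonal (it kills nothing off-diagonal because $\sum h_{-k}h_k$ is a scalar on each $\Lambda(m)$). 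This gives the first displayed identity. The second one, $D'(\alpha).m_\mu=\sum_\lambda r_{\lambda\mu}m_\lambda$, then follows because $D'(\alpha)$ is self-adjoint (it is built from $D(\alpha)$, which is manifestly self-adjoint, plus a real scalar) and because $\{q_\lambda\}$ and $\{m_\mu\}$ are dual bases by Lemma~\ref{L:triangular}: indeed $\langle D'(\alpha).m_\mu,q_\nu\rangle=\langle m_\mu,D'(\alpha).q_\nu\rangle=r_{\nu\mu}$, so the coefficient of $m_\nu$ in $D'(\alpha).m_\mu$ is exactly $r_{\nu\mu}$, matching the claim after renaming.

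The main obstacle I expect is the bookkeeping in the multiplicity count — in particular making sure that the statement "$\mu=r_j^i(k).\lambda$" uniquely pins down the \emph{value} $\lambda_i$ of the target row, the value $\lambda_j$ of the source row, and the number $k$, given $\lambda$ and $\mu$. This needs a short argument: from $\lambda$ and $\mu$ one reads off the multiset difference, which consists of removing one part of size $\lambda_j$ and one of size $\lambda_i$ and inserting parts of sizes $\lambda_j-k$ and $\lambda_i+k$; one has to rule out ambiguous situations (e.g. when $\lambda_i+k$ coincides with some existing part) by observing that the operator $r_j^i(k)$ is defined on \emph{rows}, not on part-values, so the count of triples is governed purely by how many rows of each relevant length $\lambda$ has, giving precisely the factor $m_{\lambda_i}(\lambda)(m_{\lambda_j}(\lambda)-\delta_{\lambda_i\lambda_j})/(1+\delta_{\lambda_i\lambda_j})$. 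Once this combinatorial identification is nailed down, the rest is a direct rewrite of Proposition~\ref{Dalphaproposition} together with the self-adjointness/duality argument, and no further computation is needed.
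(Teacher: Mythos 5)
Your proposal is correct and takes essentially the same route as the paper: the paper's own proof is the one\-/line observation that the first identity is just Proposition~\ref{Dalphaproposition} with terms regrouped by the resulting partition, and that the second follows from self-adjointness of $D'(\alpha)$ together with the duality $\langle q_\lambda(\alpha),m_\mu\rangle=\delta_{\lambda\mu}$ of Lemma~\ref{L:triangular}. The multiplicity count $(1+\delta_{\lambda_i\lambda_j})^{-1}m_{\lambda_i}(\lambda)(m_{\lambda_j}(\lambda)-\delta_{\lambda_i\lambda_j})$ and the uniqueness of the value-triple $(\lambda_i,\lambda_j,k)$ determined by $\lambda$ and $\mu$, which you rightly flag as the only delicate points, are left implicit in the paper, and your treatment of them is sound.
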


\begin{proof}
The first equality is coming from Proposition
\ref{Dalphaproposition}. Noting that $D'(\alpha)$ is the
self-adjoint and that $m_\lambda$'s is dual to $q_\lambda$'s, we
have the second equality.
\end{proof}

In \cite{LLM}, Jack function was given in a determinant of a matrix
with entries being monomials, and equivalently a recursion formula
is given. In the following we have a similar formula expressing Jack
function as a determinant in terms of $q_\la$'s. We remark that we
can easily find the formula in terms of monomial symmetric functions
as well in a different way.

 For a partition $\la$, let $\la=\mu^1<^L\mu^2<^L
\cdots<^L\mu^s$ be all the partitions greater than $\la$,
arranged in lexicographic order. Set matrix $M_\la=(r_{ij})_{s\times
s}$, where $r_{ij}=r_{\mu^j\mu^i}$ as defined in Lemma
\ref{L:actionofD'alpha}, we have

\begin{theorem}
Set $Q_\la(\al)=\sum_iC_{\la\mu^i}(\al)q_{\mu^i}(\al)$, then the
vector
$$X_\la=(C_{\la\mu^1}(\al),C_{\la\mu^2}(\al),\cdots,C_{\la\mu^s}(\al))^t$$
satisfies
$$M_\la X_\la=e'_\la(\al)X_\la.$$ And we have the
determinant formula for Jack functions:
$$Q_\la(\al)=c_\la detN_\la,$$
where $c_\la=\prod_{i=2}^s(e'_{\mu^i}(\al)-e'_{\mu^1}(\al))^{-1}$,
and $N_\la$ is the matrix $M_\la-e'_\la(\al)Id$ with the first row
replaced by $(q_{\mu^1}(\al),q_{\mu^2}(\al),\cdots,q_{\mu^s}(\al))$.
\end{theorem}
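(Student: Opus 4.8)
The plan is to exploit the eigenvector characterization already established: by Proposition~\ref{Q'} and Theorem~\ref{Dalphatheorem}, $Q_\la(\al)=\sum_i C_{\la\mu^i}(\al)q_{\mu^i}(\al)$ is the unique (up to scalar) eigenvector of $D'(\al)$ with eigenvalue $e'_\la(\al)$ lying in the span of $\{q_{\mu^i}\}$ with $C_{\la\la}(\al)=1$. First I would translate Lemma~\ref{L:actionofD'alpha} into matrix form: since $D'(\al).q_{\mu^j}=\sum_k r_{\mu^j\mu^k}q_{\mu^k}$ and the partitions $\mu^1<^L\cdots<^L\mu^s$ are exactly those $\ge \la$ (note $D'(\al)$ raises in dominance order, hence also in lexicographic order, so the span is $D'(\al)$-invariant), applying $D'(\al)$ to $Q_\la(\al)=\sum_j C_{\la\mu^j}q_{\mu^j}$ and equating coefficients of each $q_{\mu^i}$ gives $\sum_j r_{\mu^j\mu^i}C_{\la\mu^j}=e'_\la(\al)C_{\la\mu^i}$, i.e. $M_\la X_\la=e'_\la(\al)X_\la$ with $M_\la=(r_{\mu^j\mu^i})$. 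This is the first assertion.

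Next I would extract the determinant formula from the linear system $(M_\la-e'_\la(\al)Id)X_\la=0$ together with the normalization $C_{\la\mu^1}(\al)=C_{\la\la}(\al)=1$. The key structural observation is that $M_\la$ is lower-triangular with respect to the lexicographic order on rows/columns after the natural reindexing: $r_{\mu^j\mu^i}=0$ unless $\mu^i\ge\mu^j$ in dominance (by Lemma~\ref{L:actionofD'alpha}), and the first index $\mu^1=\la$ is dominance-minimal among the $\mu^i$, so the first column of $M_\la-e'_\la(\al)Id$ below the diagonal vanishes except we must be careful: actually $r_{\mu^1\mu^i}$ for $i>1$ is the coefficient of $q_{\mu^i}$ in $D'(\al).q_\la$, which is generally nonzero. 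The point instead is that the matrix $M_\la-e'_\la(\al)Id$ restricted to rows and columns $2,\dots,s$ is invertible because its eigenvalues are $e'_{\mu^i}(\al)-e'_\la(\al)$ for $i=2,\dots,s$, all nonzero by Lemma~\ref{L:eigenvalue} (each $\mu^i>^L\la$ with $i\ge 2$ is dominance-incomparable or dominance-larger; one checks $e'_{\mu^i}\ne e'_\la$ in all relevant cases, or reduces to the triangular structure). Then I would solve the homogeneous system by Cramer's rule: set up the matrix whose first row records the basis elements $q_{\mu^i}$ and whose remaining rows are rows $2,\dots,s$ of $M_\la-e'_\la(\al)Id$; expanding $\det N_\la$ along the first row produces a linear combination $\sum_i (\text{cofactor}_i)\, q_{\mu^i}$, and the cofactors are (up to sign and a common factor) precisely the solution $X_\la$ of the reduced system, so $\det N_\la$ is proportional to $Q_\la(\al)$. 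Finally, to pin the constant $c_\la$ I would compute the coefficient of $q_{\mu^1}=q_\la$ in $\det N_\la$: this is the $(1,1)$-cofactor, namely $\det\big(M_\la-e'_\la(\al)Id\big)_{\{2,\dots,s\}}=\prod_{i=2}^s(e'_{\mu^i}(\al)-e'_\la(\al))$ by triangularity, and since the coefficient of $q_\la$ in $Q_\la(\al)$ is $1$, we must have $c_\la=\prod_{i=2}^s(e'_{\mu^i}(\al)-e'_\la(\al))^{-1}$, matching the statement.

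The main obstacle I anticipate is justifying the triangularity/invertibility cleanly: one needs that $\big(M_\la-e'_\la(\al)Id\big)_{\{2,\dots,s\}}$ has determinant exactly $\prod_{i=2}^s(e'_{\mu^i}(\al)-e'_\la(\al))$, which requires ordering the partitions $\mu^2,\dots,\mu^s$ so that $r_{\mu^j\mu^i}=0$ whenever $i<j$ in that ordering (so the matrix is genuinely triangular with the stated diagonal), and then checking that all these diagonal differences are nonzero via Lemma~\ref{L:eigenvalue} — which strictly speaking only covers the dominance-comparable case, so I would note that if $\mu^i$ and $\la$ are dominance-incomparable the corresponding off-diagonal entries $r_{\la\mu^i}$ and the relevant submatrix entries vanish, decoupling that index, or else verify directly from \eqref{eq:eprime} that $e'_{\mu^i}(\al)\ne e'_\la(\al)$ for distinct partitions of the same weight (this follows since $e'_\nu$ determines $\sum\nu_i^2$ and $\sum i\nu_i$, hence is injective on a suitable range, matching Lemma~\ref{L:eigenvalue}'s proof). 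The rest is routine linear algebra (Cramer's rule and cofactor expansion), so once the triangular structure is in hand the formula $Q_\la(\al)=c_\la\det N_\la$ follows immediately.
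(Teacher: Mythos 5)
Your proposal is correct and follows essentially the same route as the paper: derive the eigenvector equation $M_\la X_\la=e'_\la(\al)X_\la$ by comparing coefficients of $q_{\mu^k}(\al)$ in $D'(\al).Q_\la(\al)=e'_\la(\al)Q_\la(\al)$ via Lemma \ref{L:actionofD'alpha}, then note that $M_\la-e'_\la(\al)Id$ has rank $s-1$ (the paper says co-rank $1$ with vanishing first row, you say the lower-triangular $\{2,\dots,s\}$-submatrix is invertible by Lemma \ref{L:eigenvalue} --- equivalent here), so the kernel is spanned by the first-row cofactor vector, and the normalization $C_{\la\la}(\al)=1$ fixes $c_\la$. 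The only slip is your passing claim that the first \emph{column} of $M_\la-e'_\la(\al)Id$ vanishes below the diagonal; in fact it is the first \emph{row} that vanishes (since $r_{\mu^j\la}=0$ for $j>1$, all $\mu^j$ being $\geq\la$ in dominance), but you notice the problem and your final argument does not depend on that claim.
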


\begin{proof}
Evaluating the coefficient of $q_{\mu^k}(\al)$ in
$$e'_\la(\al)Q_\la(\al)=\sum_iC_{\la\mu^i}(\al)D'(\al).q_{\mu^i}(\al),
$$
by Lemma \ref{L:actionofD'alpha}, we have
$$\sum_iC_{\la\mu^i}r_{\mu^i\mu^k}=e'_\la(\al)C_{\la\mu^k}.$$
This is the $k$th row of $M_\la X_\la=e'_\la(\al)X_\la$. Thus
$X_\la$ is a solution to the system of linear equations $(M_\la
-e'_\la(\al)Id)X=0$, note that the coefficient matrix $A=(M_\la
-e'_\la(\al)Id)$ has co-rank 1 and its first row is zero, an
elementary result of linear algebra says that the solutions are
proportional to $(A_{11},A_{12},\cdots,A_{1s})^t$, where $A_{ij}$ is
the algebraic cofactor of $a_{ij}$ in $A=(a_{ij})_{s\times s}$. Note
also that $A_{1k}=(N_\la)_{1k}$, and consider the coefficient of
$q_\la$ proves the second equation.
\end{proof}

\subsection{Generalized rising operator formula for Jack functions}
An equivalent form of the determinant formula is the following
iterative formula for the coefficients of Jack functions in terms of
generalized homogeneous functions. This formula is sometimes more
convenient to use.

\begin{theorem}\label{T:iteration}
Let $Q_\lambda(\alpha)=\sum_{\mu\geq\la}C_{\la\mu}(\al)q_\mu(\al)$,
for $\mu>\la$ we have
\begin{align*}
C_{\la\mu}(\al)=\frac{\sum_{\la\leq\nu\in
M_*(\mu)}C_{\la\nu}(\al)r_{\nu\mu}}{e'_\la(\al)-e'_\mu(\al)}.
\end{align*}
\end{theorem}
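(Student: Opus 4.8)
The plan is to derive Theorem~\ref{T:iteration} directly from Lemma~\ref{L:actionofD'alpha} by extracting a single coefficient from the eigenvalue equation, exactly as was done in the proof of Proposition~\ref{Q'} but with the deformed operator $D'(\alpha)$ in place of $D(\alpha)$. Since $D'(\alpha)=(2\alpha)^{-1}D(\alpha)-\sum_{k\geq 1}h_{-k}h_k$ differs from $D(\alpha)$ only by a scalar on each graded piece $\Lambda_F(m)$, the function $Q_\lambda(\alpha)$ remains an eigenvector, now with eigenvalue $e'_\lambda(\alpha)$ as in~\eqref{eq:eprime}. So I would start from
$$D'(\alpha).Q_\lambda(\alpha)=e'_\lambda(\alpha)Q_\lambda(\alpha),$$
substitute the expansion $Q_\lambda(\alpha)=\sum_{\nu\geq\lambda}C_{\lambda\nu}(\alpha)q_\nu(\alpha)$ on both sides, and apply Lemma~\ref{L:actionofD'alpha} to each $D'(\alpha).q_\nu(\alpha)=\sum_{\omega}r_{\nu\omega}q_\omega(\alpha)$.

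Next I would compare the coefficient of a fixed $q_\mu(\alpha)$ with $\mu>\lambda$ on both sides. On the right-hand side this coefficient is $e'_\lambda(\alpha)C_{\lambda\mu}(\alpha)$. On the left-hand side, a term $C_{\lambda\nu}(\alpha)r_{\nu\mu}$ contributes exactly when $r_{\nu\mu}\neq 0$, which by Lemma~\ref{L:actionofD'alpha} happens precisely when $\nu=\mu$ (contributing $e'_\mu(\alpha)C_{\lambda\mu}(\alpha)$) or when $\mu=r_j^i(k).\nu$ for some moving-up operator, i.e. when $\nu\in M_*(\mu)$ in the notation of Definition~\ref{D:risingfiltration}. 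Combined with the constraint $\nu\geq\lambda$ coming from the expansion of $Q_\lambda$, the relevant indices are precisely those $\nu$ with $\lambda\leq\nu\in M_*(\mu)$. This yields
$$e'_\mu(\alpha)C_{\lambda\mu}(\alpha)+\sum_{\lambda\leq\nu\in M_*(\mu),\ \nu\neq\mu}C_{\lambda\nu}(\alpha)r_{\nu\mu}=e'_\lambda(\alpha)C_{\lambda\mu}(\alpha),$$
and solving for $C_{\lambda\mu}(\alpha)$ gives the claimed formula, provided $e'_\lambda(\alpha)-e'_\mu(\alpha)\neq 0$.

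The nonvanishing of $e'_\lambda(\alpha)-e'_\mu(\alpha)$ for $\lambda<\mu$ is the one point that needs a word: it follows from Lemma~\ref{L:eigenvalue} applied to $e_\lambda$, since $e'$ is an invertible affine reparametrization of $e$ on each graded piece; alternatively, as noted in the text right after~\eqref{eq:eprime}, $e'_\lambda(\alpha)-e'_\mu(\alpha)$ has the form $a\alpha+b$ with $a,b\in\mathbb{Z}_{>0}$ when $\lambda>\mu$, hence is nonzero in $F=\mathbb{Q}(\alpha)$. I do not anticipate a genuine obstacle here; the only thing requiring a little care is bookkeeping the index set in the sum, namely checking that the condition ``$r_{\nu\mu}\neq 0$ and $\nu\geq\lambda$'' is exactly ``$\lambda\leq\nu\in M_*(\mu)$'' and that the diagonal term $\nu=\mu$ is correctly separated out as $e'_\mu(\alpha)C_{\lambda\mu}(\alpha)$ before moving it to the other side. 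Once that is settled, the formula is immediate, and its equivalence with the determinant formula of the previous subsection is the standard translation between a triangular recursion and the expansion of a determinant along its first row.
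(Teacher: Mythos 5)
Your proposal is correct and follows essentially the same route as the paper: the theorem is obtained by extracting the coefficient of $q_\mu(\alpha)$ from the eigenvalue equation $D'(\alpha).Q_\lambda(\alpha)=e'_\lambda(\alpha)Q_\lambda(\alpha)$ via Lemma \ref{L:actionofD'alpha}, separating the diagonal term, and dividing by $e'_\lambda(\alpha)-e'_\mu(\alpha)$, which is nonzero since it has the form $a\alpha+b$ with $a,b\in\mathbb{Z}_{>0}$. This is exactly the argument underlying the paper's determinant formula (and mirrors the proof of Proposition \ref{Q'}), of which Theorem \ref{T:iteration} is presented as an equivalent form.
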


 As an
application of Theorem \ref{T:iteration}, we have the following
explicit formula for two-row or two-column Jack functions.
\begin{proposition}\cite{JJ,S}\label{P:2rowcolumn}
For $\lambda^0=(r,s)$, with $r\geq s$, $a=r-s$, set
$\lambda^i=(r+i,s-i)$ for $0\leq i\leq s$. We have
\begin{align}
&Q_{\lambda^0}(\alpha)=\sum_{s\geq i\geq0}a_i(\alpha)q_{\lambda^i}(\alpha),\\
&J_{(\lambda^0)'}(\alpha)=\sum_{s\geq
i\geq0}b_{s-i}(\alpha)m_{(\lambda^i)'},
\end{align}
where $a_0(\alpha)=1$ and for $i\geq 1$,
\begin{align*}
&a_i(\alpha)=(-1)^i(a+2i)\frac{(a+1)\cdots(a+i-1)}{i!}
\frac{(1-\alpha)\cdots(1-(i-1)\alpha)}{(1+(a+1)\alpha)\cdots(1+(a+i)\alpha)},\\
&b_k(\alpha)=(s+r-2k)!\prod_{1\leq j\leq k}(s+1-j)(\alpha+j).
\end{align*}
\end{proposition}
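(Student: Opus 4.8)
The plan is to set up and solve two parallel recursions: Theorem~\ref{T:iteration} in the $\{q_\mu\}$-basis for the first identity, and the monomial analogue of it (the second identity of Lemma~\ref{L:actionofD'alpha} together with Theorem~\ref{Dalphatheorem'}) for the second. The common starting observation is order-theoretic: if $\mu\geq\lambda^0=(r,s)$ and $|\mu|=r+s$, then $\mu_3+\mu_4+\cdots\leq 0$, so $\ell(\mu)\leq 2$ and $\mu$ must be one of $\lambda^i=(r+i,s-i)$, $0\leq i\leq s$; conjugating, the partitions $\leq(\lambda^0)'=(2^s,1^a)$ are precisely the $(\lambda^i)'=(2^{\,s-i},1^{\,a+2i})$. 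Thus both expansions in the proposition range over complete families of comparable partitions, and the recursion of Theorem~\ref{T:iteration} (resp.\ its monomial analogue) determines all the coefficients uniquely.

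For the first identity I feed $\lambda=\lambda^0$, $\mu=\lambda^i$ into Theorem~\ref{T:iteration}. Two inputs are needed. From \eqref{eq:eprime}, using $\sum_j(\lambda^i_j)^2=(r+i)^2+(s-i)^2$ and $\sum_j j\lambda^i_j=r+2s-i$, one computes $e'_{\lambda^0}(\alpha)-e'_{\lambda^i}(\alpha)=-i\bigl(1+(a+i)\alpha\bigr)$. Next, $M_*(\lambda^i)\cap\{\nu\geq\lambda^0\}=\{\lambda^0,\dots,\lambda^{i-1}\}$, and for each $l<i$, moving $k=i-l$ boxes from row $2$ to row $1$ of $\lambda^l$ produces $\lambda^i$, so Lemma~\ref{L:actionofD'alpha} gives $r_{\lambda^l\lambda^i}=\lambda^l_1-\lambda^l_2+2k=a+2i$, \emph{independent of} $l$ (the multiplicity factors equal $1$; in the degenerate case $r=s$, $l=0$ the $\delta$-corrections in Lemma~\ref{L:actionofD'alpha} conspire to the same value). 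Hence Theorem~\ref{T:iteration} reduces to $a_i(\alpha)=\dfrac{-(a+2i)}{i\bigl(1+(a+i)\alpha\bigr)}\sum_{l=0}^{i-1}a_l(\alpha)$ with $a_0=1$. I would solve this by putting $S_i=\sum_{l=0}^{i}a_l(\alpha)$; then $a_i=S_i-S_{i-1}$ and the recursion gives $S_i=S_{i-1}\cdot\dfrac{-(a+i)(1-i\alpha)}{i\bigl(1+(a+i)\alpha\bigr)}$, which telescopes to a closed product, and substituting back into $a_i=\dfrac{-(a+2i)}{i(1+(a+i)\alpha)}S_{i-1}$ and reassembling the factorial-type products $(a+1)\cdots(a+i-1)$, $(1-\alpha)\cdots(1-(i-1)\alpha)$, $(1+(a+1)\alpha)\cdots(1+(a+i)\alpha)$ yields the stated $a_i(\alpha)$.

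For the second identity I argue identically in the monomial basis. By Theorem~\ref{Dalphatheorem'}, $P_{(\lambda^0)'}(\alpha)=\sum_i D_{(\lambda^0)'(\lambda^i)'}(\alpha)\,m_{(\lambda^i)'}$ is the $D(\alpha)$-eigenvector normalised by $D_{(\lambda^0)'(\lambda^0)'}=1$, and the second identity of Lemma~\ref{L:actionofD'alpha} ($D'(\alpha).m_\mu=\sum_\lambda r_{\lambda\mu}m_\lambda$) produces, just as in Proposition~\ref{Q'}, the recursion $D_{(\lambda^0)'(\lambda^i)'}(\alpha)=\bigl(e'_{(\lambda^0)'}(\alpha)-e'_{(\lambda^i)'}(\alpha)\bigr)^{-1}\sum_{\nu}D_{(\lambda^0)'\nu}(\alpha)\,r_{(\lambda^i)'\nu}$, the sum over $\nu\in M^*\bigl((\lambda^i)'\bigr)$ with $(\lambda^i)'<\nu\leq(\lambda^0)'$. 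The decisive simplification is that there is exactly one such $\nu$: to move up within partitions of parts $\leq 2$ one must merge two rows of length $1$, giving $(2^{\,s-i+1},1^{\,a+2i-2})=(\lambda^{i-1})'$. Here $m_1\bigl((\lambda^i)'\bigr)=a+2i$, so Lemma~\ref{L:actionofD'alpha} gives $r_{(\lambda^i)'(\lambda^{i-1})'}=\tfrac12(a+2i)(a+2i-1)\cdot 2=(a+2i)(a+2i-1)$, while \eqref{eq:eprime} gives $e'_{(\lambda^0)'}(\alpha)-e'_{(\lambda^i)'}(\alpha)=i(\alpha+a+i)$. Solving this one-step recursion yields a closed product for $D_{(\lambda^0)'(\lambda^i)'}(\alpha)$, and multiplying by the lower-hook normalisation $h^{(\lambda^0)'}_*\bigl((\lambda^0)'\bigr)$ (Lemma~\ref{L:triangular}: $J_\mu=h^\mu_*(\mu)P_\mu$) — an explicit product of linear factors in $\alpha$ read off the diagram of $(2^s,1^a)$ — gives $b_{s-i}(\alpha)$. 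One could instead deduce the second identity from the first via the involution $\omega_\alpha\colon p_r\mapsto(-1)^{r-1}\alpha p_r$, under which $Q_{\lambda^0}(\alpha)\mapsto P_{(\lambda^0)'}(1/\alpha)$ and $Q_n(\alpha)\mapsto e_n$; but that forces one to expand the products $e_{r+i}e_{s-i}$ in monomials, so the direct route seems preferable.

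The main difficulty is not conceptual — once the families are identified, both recursions are forced by Theorems~\ref{T:iteration} and \ref{Dalphatheorem'} — but combinatorial: checking that $r_{\lambda^l\lambda^i}$ is genuinely independent of $l$ (and that the $r=s$ degeneracy does not spoil this), verifying that $M^*\bigl((\lambda^i)'\bigr)$ meets the order ideal below $(\lambda^0)'$ in the single partition $(\lambda^{i-1})'$, and carrying the hook-length renormalisation cleanly through to the closed form for $b_k(\alpha)$.
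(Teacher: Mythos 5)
For the first expansion your argument is exactly the paper's: the same identification of the partitions $\mu\geq(r,s)$ of weight $r+s$ with the $\lambda^i$, the same values $e'_{\lambda^0}(\alpha)-e'_{\lambda^i}(\alpha)=-i(1+(a+i)\alpha)$ and $r_{\lambda^l\lambda^i}=a+2i$, and the same recursion $a_i=\frac{-(a+2i)}{i(1+(a+i)\alpha)}\sum_{l<i}a_l$. The paper verifies the closed form by inductively establishing $\sum_{j=0}^{i-1}a_j=a_i\cdot\frac{-i(1+(a+i)\alpha)}{a+2i}$, which is precisely your telescoping identity $S_i=S_{i-1}\cdot\frac{-(a+i)(1-i\alpha)}{i(1+(a+i)\alpha)}$ in disguise; this part of your proposal is correct and complete, including the treatment of the degenerate case $r=s$, $l=0$.

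For the second expansion the paper gives no details (``the proof is exactly the same and is omitted''), and your plan --- the monomial-basis recursion, the observation that $M^*\bigl((\lambda^i)'\bigr)$ meets the order ideal below $(2^s1^a)$ only in $(\lambda^{i-1})'$ so that the recursion becomes one-step, and the final renormalisation by $h_*^{(\lambda^0)'}\bigl((\lambda^0)'\bigr)$ --- is structurally sound and is the natural way to supply the missing argument. But you stop at ``gives $b_{s-i}(\alpha)$'' without performing the last computation, and there is a genuine problem hiding there: carrying your recursion $D_i=\frac{(a+2i)(a+2i-1)}{i(\alpha+a+i)}D_{i-1}$ through and multiplying by $h_*^{(2^s1^a)}\bigl((2^s1^a)\bigr)=a!\,s!\prod_{j=1}^s(\alpha+a+j)$ yields $b_k=(r+s-2k)!\prod_{1\leq j\leq k}(s+1-j)(\alpha+r-k+j)$, whose linear factors are $(\alpha+r-k+1),\dots,(\alpha+r)$ rather than the $(\alpha+1),\dots,(\alpha+k)$ printed in the proposition; the two agree only for $k=0$ or $k=r$. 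The printed formula is in fact incorrect: for $\lambda^0=(2,1)$ one has $J_{(2,1)}=(\alpha+2)m_{(2,1)}+6m_{(1,1,1)}$, the leading coefficient being the lower-hook product $h_*^{(2,1)}\bigl((2,1)\bigr)=\alpha+2$, whereas the stated formula gives $b_1=\alpha+1$. So your method proves the corrected statement, but as written your proof asserts agreement with a formula it does not (and cannot) produce; you need to finish the computation and record the corrected $b_k$ explicitly.
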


\begin{proof}
For the statement about $Q_{\lambda^0}(\alpha)$, we have
$e'_{\lambda^0}(\alpha)-e'_{\lambda^i}(\alpha)=-i(1+(a+i)\alpha)$.
Let's consider $\la=\la^0$ in Theorem \ref{T:iteration}, we find
\begin{align*}
a_i(\alpha)=\frac{a+2i}{-i(1+(a+i)\alpha)}\sum_{j=0}^{i-1}a_j(\alpha),
\end{align*}
where we use the fact that $r_{\la^j\la^i}=a+2i$ for $j<i$.

To finish the proof, we prove that the assignment of
$a_i(\al)$ in the statement satisfies the following equality:
\begin{align}\label{aialpha}
\sum_{j=0}^{i-1}a_j(\alpha)=a_i(\alpha)\frac{-i(1+(a+i)\alpha)}{a+2i}.
\end{align}
In fact, for $i=1$, it is immediate. If (\ref{aialpha}) is true, then we have
\begin{align*}
\sum_{j=0}^{i}a_j(\alpha)&=a_i(\alpha)\frac{-i(1+(a+i)\alpha)}{a+2i}+a_i(\alpha)\\
&=a_i(\alpha)\Big(\frac{-i(1+(a+i)\alpha)}{a+2i}+1\Big)\\
&=a_i(\alpha)\frac{(a+i)(1-i\alpha)}{a+2i}\\
&=a_{i+1}(\alpha)\frac{-(i+1)(1+(a+i+1)\alpha)}{a+2(i+1)}.\\
\end{align*}
For the statement about $J_{(\lambda^0)'}$, the proof is exactly the
same and is omitted.
\end{proof}

In \cite{S}, Stanley conjectured that the Littlewood-Richardson coefficient $C_{\mu\nu}^\la(\al)=\langle
J_\mu(\alpha) J_\nu(\alpha),J_\lambda(\alpha)\rangle$ is a
polynomial of $\alpha$ with nonnegative integer coefficients. Except
a few special case (for example $\mu$ is a one row partition),
this conjecture is believed to be open. In the following, we will
give a combinatorial formula for the coefficients.


First, we will give a combinatorial formula for the Jack
symmetric functions based on the iteration formula in Theorem
\ref{T:iteration}. To do this, we need the following definition.
\begin{definition}
For a sequence of partitions
$\delta=(\lambda^0,\lambda^1,\cdots,\lambda^s)$, we say that
$\delta$ is a moving up filtration of partitions starting from $\lambda^0$ and
ending at $\lambda^s$ if $\lambda^i\in M^*(\lambda^{i-1})$ for
$i=1,2,\cdots,s$. For such a filtration we denote its initial partition as $st(\delta)=\lambda^0$ and
the last partition as $ed(\delta)=\lambda^s$. Assume that $\lambda\geq\lambda^s$, and
$\lambda^0\geq\mu$ we define
$$f^\lambda(\delta)=\prod_{i=0}^{s-1}\frac{r_{\lambda^i\lambda^{i+1}}}{e'_\lambda(\alpha)-e'_{\lambda^i}(\alpha)},$$
$$f_\mu(\delta)=\prod_{i=0}^{s-1}\frac{r_{\lambda^i\lambda^{i+1}}}{e'_\mu(\alpha)-e'_{\lambda^{i+1}}(\alpha)}.$$
\end{definition}

\begin{theorem}\label{T:combinatorial formula}
Let $J_\lambda(\alpha)=\sum_{\mu}v_{\lambda\mu}(\alpha)m_\mu$,
$Q_\mu(\alpha)=\sum_{\la}C_{\mu\lambda}(\alpha)q_\lambda(\alpha)$
for $\mu<\lambda$ we have

\begin{align*}
&v_{\lambda\mu}(\alpha)=v_{\lambda\lambda}(\alpha)\sum
f^\lambda(\delta),\\
&C_{\mu\lambda}(\alpha)=C_{\mu\mu}(\alpha)\sum f_\mu(\delta),
\end{align*}
 where both sums are over all moving up filtrations of partitions
$\delta$ from $\mu$ to $\lambda$.
\end{theorem}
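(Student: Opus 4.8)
The plan is to iterate the recursion of Theorem \ref{T:iteration} and recognize the resulting sum as a sum over paths. First I would set up notation: for $\mu<\lambda$ with $|\mu|=|\lambda|$, Theorem \ref{T:iteration} gives
\begin{align*}
C_{\mu\lambda}(\alpha)=\frac{1}{e'_\mu(\alpha)-e'_\lambda(\alpha)}\sum_{\mu\leq\nu\in M_*(\lambda)}C_{\mu\nu}(\alpha)\,r_{\nu\lambda},
\end{align*}
which expresses $C_{\mu\lambda}$ in terms of $C_{\mu\nu}$ for partitions $\nu$ with $\mu\leq\nu<\lambda$ (since $\nu\in M_*(\lambda)$ forces $\nu<\lambda$ by Lemma \ref{L:actionofD'alpha}, the only nonzero $r_{\nu\lambda}$ occurring when $\lambda\in M^*(\nu)$). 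The idea is to prove the formula for $C_{\mu\lambda}$ by induction on the dominance order of $\lambda$ (for fixed $\mu$), the base case $\lambda=\mu$ being the trivial filtration $\delta=(\mu)$ with empty product $f_\mu(\delta)=1$, giving $C_{\mu\mu}(\alpha)=C_{\mu\mu}(\alpha)$.

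For the inductive step I would substitute the inductive hypothesis $C_{\mu\nu}(\alpha)=C_{\mu\mu}(\alpha)\sum_{\delta'}f_\mu(\delta')$ — the sum over moving up filtrations $\delta'$ from $\mu$ to $\nu$ — into the recursion. This yields
\begin{align*}
C_{\mu\lambda}(\alpha)=\frac{C_{\mu\mu}(\alpha)}{e'_\mu(\alpha)-e'_\lambda(\alpha)}\sum_{\nu}\;r_{\nu\lambda}\sum_{\delta'\colon\mu\to\nu}f_\mu(\delta'),
\end{align*}
where $\nu$ runs over partitions with $\mu\leq\nu$ and $\lambda\in M^*(\nu)$. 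The key combinatorial observation is that a moving up filtration $\delta$ from $\mu$ to $\lambda$ is exactly a moving up filtration $\delta'$ from $\mu$ to some $\nu$ with $\lambda\in M^*(\nu)$, followed by the single step $\nu\to\lambda$; and under this bijection $f_\mu(\delta)=f_\mu(\delta')\cdot\dfrac{r_{\nu\lambda}}{e'_\mu(\alpha)-e'_\lambda(\alpha)}$, since the new last step contributes the factor indexed by $i+1=s$ in the defining product, whose denominator is $e'_\mu(\alpha)-e'_{\lambda}(\alpha)$. Pulling the common denominator out of every term and summing over all such $(\delta',\nu)$ reproduces $C_{\mu\mu}(\alpha)\sum_{\delta\colon\mu\to\lambda}f_\mu(\delta)$, completing the induction. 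The formula for $v_{\lambda\mu}$ is proved the same way, using the dual recursion for $P_\lambda(\alpha)=\sum_{\mu\leq\lambda}D_{\lambda\mu}(\alpha)m_\mu$ (equivalently the $m$-expansion of $J_\lambda$), which by the self-adjointness of $D'(\alpha)$ and Lemma \ref{L:actionofD'alpha} reads $D_{\lambda\mu}(\alpha)=\dfrac{1}{e'_\lambda(\alpha)-e'_\mu(\alpha)}\sum_{\mu\in M_*(\nu),\,\nu\leq\lambda}r_{\nu\mu}D_{\lambda\nu}(\alpha)$; now one inducts downward on the dominance order of $\mu$ and the weights $f^\lambda(\delta)=\prod_i r_{\lambda^i\lambda^{i+1}}/(e'_\lambda(\alpha)-e'_{\lambda^i}(\alpha))$ assemble in the same path-product fashion, with $v_{\lambda\lambda}(\alpha)=1$.

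I expect the main obstacle to be purely bookkeeping: matching the indices of the product defining $f_\mu(\delta)$ (which runs $i=0,\dots,s-1$ with the $i$th factor having denominator $e'_\mu(\alpha)-e'_{\lambda^{i+1}}(\alpha)$) against the recursion's single denominator $e'_\mu(\alpha)-e'_\lambda(\alpha)$, and correctly handling the sum over which $\nu\in M_*(\lambda)$ contributes — i.e. verifying that deleting the last partition of a filtration from $\mu$ to $\lambda$ does land one in a filtration from $\mu$ to some $\nu$ with $\lambda\in M^*(\nu)$ and $\mu\leq\nu$, and that this correspondence is a bijection with no over- or under-counting. Everything else — the recursions themselves, nonvanishing of the denominators $e'_\lambda(\alpha)-e'_\mu(\alpha)$ for $\lambda>\mu$ (Lemma \ref{L:eigenvalue} together with the remark after \eqref{eq:eprime}), and the duality between the $q$ and $m$ sides — is already available in the excerpt.
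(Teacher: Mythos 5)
Your proof is correct and follows the route the paper intends: Theorem \ref{T:combinatorial formula} is obtained by iterating the recursion of Theorem \ref{T:iteration} (and its dual on the $m$-basis), with the path-product structure of $f_\mu(\delta)$ and $f^\lambda(\delta)$ matching the peeled-off last (resp.\ first) step of a filtration exactly as you describe. Two cosmetic slips worth fixing: in the dual recursion the coefficient should be $r_{\mu\nu}$ rather than $r_{\nu\mu}$ (under the paper's convention $r_{\mu\nu}\neq 0$ only when $\nu\in M^*(\mu)$, so $r_{\nu\mu}$ with $\nu>\mu$ would vanish identically), and $v_{\lambda\lambda}(\alpha)$ equals $h^\lambda_*(\lambda)$ rather than $1$ --- harmless here since the claimed identity is homogeneous in the $v$'s, so it suffices to prove it for the monic normalization $D_{\lambda\mu}$ and rescale.
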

Note that $C_{\mu\mu}=1$ and
$v_{\lambda\lambda}(\alpha)=\sum_{s\in\lambda}h_*^\lambda(s)$. Also
notice that $v_{\lambda\mu}(\alpha)$ is an integral polynomial of
$\al$ by Theorem \ref{T:KSMac}, we have:
\begin{corollary} The coefficient
$v_{\lambda\mu}$ is the product of integral polynomials of the form
$a_i\alpha+b_i$ if there is a unique moving up filtration from $\mu$
to $\lambda$.
\end{corollary}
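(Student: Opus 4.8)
The plan is to combine the combinatorial formula in Theorem \ref{T:combinatorial formula} with the nonnegativity result in Theorem \ref{T:KSMac}. By Theorem \ref{T:combinatorial formula}, we have $v_{\lambda\mu}(\alpha)=v_{\lambda\lambda}(\alpha)\sum_\delta f^\lambda(\delta)$, where the sum runs over all moving up filtrations $\delta$ from $\mu$ to $\lambda$. If there is a \emph{unique} such filtration $\delta=(\lambda^0,\lambda^1,\dots,\lambda^s)$ with $\lambda^0=\mu$ and $\lambda^s=\lambda$, then the sum collapses to a single term and
$$v_{\lambda\mu}(\alpha)=v_{\lambda\lambda}(\alpha)\,f^\lambda(\delta)=\left(\sum_{s\in\lambda}h_*^\lambda(s)\right)\prod_{i=0}^{s-1}\frac{r_{\lambda^i\lambda^{i+1}}}{e'_\lambda(\alpha)-e'_{\lambda^i}(\alpha)}.$$

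The first thing I would do is analyze the factors $e'_\lambda(\alpha)-e'_{\lambda^i}(\alpha)$ in the denominator. Since each $\lambda^i$ satisfies $\lambda^i<\lambda$ (being obtained from $\mu\leq\lambda^i<\lambda$ by moving up, staying below $\lambda$), the remark following equation (\ref{eq:eprime}) tells us that $e'_\lambda(\alpha)-e'_{\lambda^i}(\alpha)=a_i\alpha+b_i$ with $a_i,b_i\in\mathbb{Z}_{>0}$; in particular each such factor is a genuine degree-one integral polynomial with positive coefficients, and none of them vanishes. The numerator factors $r_{\lambda^i\lambda^{i+1}}$ are, by Lemma \ref{L:actionofD'alpha}, positive integers (they equal $(1+\delta)^{-1}m\,(m-\delta)(\lambda_p-\lambda_q+2k)$ for appropriate indices, which is a positive integer whenever the move is legal). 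Finally, $v_{\lambda\lambda}(\alpha)=\prod_{s\in\lambda}h^*_\lambda(s)$ by Lemma \ref{L:triangular} applied to $J_\lambda=h^*_\lambda(\lambda)Q_\lambda$ together with the normalization $v_{\lambda,(1^{|\lambda|})}=|\lambda|!$; each upper hook length $h^*_\lambda(s)=\alpha(\lambda_i-j+1)+(\lambda'_j-i)$ is again of the form $a\alpha+b$ with $a\in\mathbb{Z}_{>0}$, $b\in\mathbb{Z}_{\geq0}$.

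So a priori $v_{\lambda\mu}(\alpha)$ is a ratio: a product of integral linear polynomials divided by another product of integral linear polynomials. The key point that rescues us is Theorem \ref{T:KSMac}, which guarantees that $v_{\lambda\mu}(\alpha)$ is actually a polynomial in $\alpha$ with nonnegative integer coefficients — in fact $v_{\lambda\mu}(\alpha)/m(\mu)!\in\mathbb{Z}_{\geq0}[\alpha]$. Combining this with the explicit single-term expression, the denominator $\prod_i(a_i\alpha+b_i)$ must divide the numerator in $\mathbb{Q}[\alpha]$, and since $\mathbb{Q}[\alpha]$ is a UFD, each irreducible factor $a_i\alpha+b_i$ of the denominator is, up to a rational scalar, one of the linear factors appearing in the numerator product $v_{\lambda\lambda}(\alpha)\prod_i r_{\lambda^i\lambda^{i+1}}$ (the constant $r_{\lambda^i\lambda^{i+1}}$'s contribute only scalars). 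Cancelling, $v_{\lambda\mu}(\alpha)$ is a rational scalar times a sub-product of the $h^*_\lambda(s)$'s; since it lies in $\mathbb{Z}_{\geq0}[\alpha]$ and the surviving factors are linear with positive leading coefficients and nonnegative constant terms, the result is exactly a product of integral polynomials of the form $a_i\alpha+b_i$.

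The main obstacle I anticipate is making the cancellation argument fully rigorous: one needs to be careful that after dividing out the denominator the resulting scalar is a positive integer and that no spurious factor of the form $a\alpha+b$ with $b<0$ can appear. This is where Theorem \ref{T:KSMac} is doing the heavy lifting — without it one only gets a rational function. I would also double-check that the uniqueness hypothesis genuinely forces the sum in Theorem \ref{T:combinatorial formula} to be a single summand (as opposed to, say, several filtrations collapsing), but this is immediate from the phrasing of that theorem. Everything else is bookkeeping with the explicit forms of $h^*_\lambda(s)$, $r_{\lambda^i\lambda^{i+1}}$, and $e'_\nu(\alpha)$ established above.
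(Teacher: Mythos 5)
Your proof is correct and follows essentially the same route the paper takes (the paper's own justification is just the compressed remark preceding the corollary: the unique filtration collapses the sum in Theorem \ref{T:combinatorial formula} to a single ratio of products of integral linear polynomials, and integrality from Theorem \ref{T:KSMac} forces the denominators to cancel). The only slip is cosmetic: $v_{\lambda\lambda}(\alpha)$ equals the product of \emph{lower} hook-lengths $\prod_{s\in\lambda}h_*^\lambda(s)$ (via $J_\lambda=h_*^\lambda(\lambda)P_\lambda$ and $P_\lambda=m_\lambda+\cdots$), not the upper ones, but either way it is a product of integral linear factors and the argument is unaffected.
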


Note that we can also write theorem \ref{T:combinatorial formula} in
another form as
\begin{align*}
&J_\lambda(\alpha)=v_{\la\la}(\al)\sum_{ed(\delta)=\la}f^\la(\delta)m_{st(\delta)},\\
&Q_\mu(\alpha)=C_{\mu\mu}(\alpha)\sum_{st(\delta)=\mu}f_\mu(\delta)q_{ed(\delta)}(\alpha).
\end{align*}
Using this formula, we can give a combinatorial formula
for the Littlewood-Richardson coefficients of Jack functions.

\begin{theorem}
We have\begin{align*}
 \langle Q_\mu(\al)Q_\nu(\al),
J_\lambda(\al)\rangle=v_{\la\la}(\al)\sum_{\delta_1,\delta_2,\delta}f_\mu(\delta_1)f_\nu(\delta_2)f^\la(\delta),
\end{align*}
where the sum is over all triples of moving up filtrations
$(\delta_1,\delta_2,\delta)$ such that
$st(\delta_1)=\mu,~st(\delta_2)=\nu,~ed(\delta)=\la$, and
 $st(\delta)=ed(\delta_1)\cup ed(\delta_2)$.
\end{theorem}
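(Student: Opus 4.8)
The plan is to combine the two combinatorial expansions from Theorem \ref{T:combinatorial formula} and then compute the pairing term by term. First I would write
$$Q_\mu(\al)=C_{\mu\mu}(\al)\sum_{st(\delta_1)=\mu}f_\mu(\delta_1)\,q_{ed(\delta_1)}(\al),\qquad
Q_\nu(\al)=C_{\nu\nu}(\al)\sum_{st(\delta_2)=\nu}f_\nu(\delta_2)\,q_{ed(\delta_2)}(\al),$$
using $C_{\mu\mu}=C_{\nu\nu}=1$, and expand $J_\lambda(\al)=v_{\la\la}(\al)\sum_{ed(\delta)=\la}f^\la(\delta)\,m_{st(\delta)}$. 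Then
$$\langle Q_\mu Q_\nu,J_\la\rangle=v_{\la\la}(\al)\sum_{\delta_1,\delta_2,\delta}f_\mu(\delta_1)f_\nu(\delta_2)f^\la(\delta)\,\langle q_{ed(\delta_1)}q_{ed(\delta_2)},m_{st(\delta)}\rangle,$$
so everything reduces to evaluating $\langle q_\sigma q_\tau,m_\rho\rangle$ for partitions $\sigma=ed(\delta_1)$, $\tau=ed(\delta_2)$, $\rho=st(\delta)$.

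The key step is to show that $\langle q_\sigma q_\tau,m_\rho\rangle=\delta_{\rho,\,\sigma\cup\tau}$, where $\sigma\cup\tau$ denotes the partition obtained by merging the parts of $\sigma$ and $\tau$ and re-sorting. This follows from the duality $\langle q_\eta,m_\zeta\rangle=\delta_{\eta\zeta}$ of Lemma \ref{L:triangular} together with the coproduct behaviour of $q_\eta$: since $q_\eta=\prod_i Q_{\eta_i}$ and each $Q_n$ is primitive-like (its generating function $Y(z)$ is a grouplike exponential, so $Y(z)$ in two alphabets is $Y(z)\otimes Y(z)$), one has $q_\sigma q_\tau=q_{\sigma\cup\tau}$ as elements of $\Lambda_F$ — in fact this is immediate from the definition $q_\la=Q_{\la_1}Q_{\la_2}\cdots$ as an unordered product. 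Hence $\langle q_\sigma q_\tau,m_\rho\rangle=\langle q_{\sigma\cup\tau},m_\rho\rangle=\delta_{\rho,\,\sigma\cup\tau}$. Substituting this Kronecker delta collapses the sum over $\delta$ to exactly those filtrations with $st(\delta)=ed(\delta_1)\cup ed(\delta_2)$, which is precisely the stated index set.

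Putting this together gives
$$\langle Q_\mu(\al)Q_\nu(\al),J_\la(\al)\rangle=v_{\la\la}(\al)\sum_{(\delta_1,\delta_2,\delta)}f_\mu(\delta_1)f_\nu(\delta_2)f^\la(\delta),$$
with the sum over all triples such that $st(\delta_1)=\mu$, $st(\delta_2)=\nu$, $ed(\delta)=\la$, and $st(\delta)=ed(\delta_1)\cup ed(\delta_2)$, as claimed. The main obstacle I anticipate is bookkeeping rather than conceptual: one must be careful that the "merge and re-sort" operation $\sigma\cup\tau$ is the correct combinatorial object appearing in $\langle q_\sigma q_\tau,m_\rho\rangle$, and that no overcounting occurs when the parts of $\sigma$ and $\tau$ coincide — but since $q_\la$ is defined as a genuine product of the $Q_{\la_i}$ (so $q_\sigma q_\tau$ literally equals $q_{\sigma\cup\tau}$ with multiplicities added), this is automatic and the identity $\langle q_\sigma q_\tau, m_\rho\rangle = \delta_{\rho,\sigma\cup\tau}$ holds on the nose. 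A secondary point to check is that the factor $v_{\la\la}(\al)$ is pulled out correctly and only once, which it is since it appears only in the $J_\la$ expansion.
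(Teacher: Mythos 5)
Your proof is correct and is exactly the argument the paper intends (the paper states this theorem without proof, as an immediate consequence of Theorem \ref{T:combinatorial formula}): expand $Q_\mu$, $Q_\nu$ in the $q$'s and $J_\lambda$ in the $m$'s, use $q_{ed(\delta_1)}q_{ed(\delta_2)}=q_{ed(\delta_1)\cup ed(\delta_2)}$ together with the duality $\langle q_\eta,m_\zeta\rangle=\delta_{\eta\zeta}$ from Lemma \ref{L:triangular}, and collapse the sum. Your side remarks on the multiset union and on pulling out $v_{\la\la}(\al)$ once are the right points to check and are handled correctly.
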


We have the following rising-operator-like formula for Jack
functions as a corollary of Theorem \ref{T:combinatorial formula}.
\begin{corollary}\label{C:rising-operator-like formula}
We have, for any partition $\la$,
\begin{equation}
 Q_\la(\al)=\sum_{(\underline{r})}\prod_{t=1}^l\frac{(r_{t-1}\cdots
r_2r_1.\la)_{i_t}-(r_{t-1}\cdots
r_2r_1.\la)_{j_t}+2k_t}{e'_\la(\al)-e'_{r_t\cdots
r_1.\la}(\al)}q_{r_l\cdots r_2r_1.\la}(\al),
\end{equation}
where the sum is over all sequences $(\underline{r})=(r_l,\cdots,r_2,r_1)$, here $r_p$ denotes the moving up operator
$r_{j_p}^{i_p}(k_p)$ (see Definition
\ref{D:risingfiltration}), and
$e_{\lambda}'(\alpha)$ is given in (\ref{eq:eprime}). When
$l=0$ it corresponds to the term $q_\la(\al)$.
\end{corollary}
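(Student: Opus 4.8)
The plan is to derive Corollary \ref{C:rising-operator-like formula} directly by unwinding the combinatorial description of $Q_\la(\al)$ already obtained in Theorem \ref{T:combinatorial formula} (second displayed identity), namely
$$Q_\mu(\alpha)=C_{\mu\mu}(\alpha)\sum_{st(\delta)=\mu}f_\mu(\delta)\,q_{ed(\delta)}(\alpha),$$
specialized to $\mu=\la$ with $C_{\la\la}=1$. First I would observe that a moving up filtration $\delta=(\la^0,\la^1,\dots,\la^l)$ with $st(\delta)=\la$ is, by Definition \ref{D:risingfiltration}, exactly the data of a starting partition $\la^0=\la$ together with a choice, at each step $t=1,\dots,l$, of a moving up operator $r_t=r_{j_t}^{i_t}(k_t)$ applied to $\la^{t-1}$, so that $\la^t=r_t\cdots r_1.\la$ after reordering. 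Thus summing over all moving up filtrations starting at $\la$ is the same as summing over all finite sequences $(\underline r)=(r_l,\dots,r_1)$ of moving up operators that are legal when applied in succession to $\la$; the empty sequence ($l=0$) corresponds to the trivial filtration $\delta=(\la)$ and contributes the term $q_\la(\al)$.

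Next I would translate the weight $f_\la(\delta)=\prod_{i=0}^{l-1}\dfrac{r_{\la^i\la^{i+1}}}{e'_\la(\alpha)-e'_{\la^{i+1}}(\alpha)}$ into the form stated in the corollary. The denominators match immediately: $e'_\la(\al)-e'_{\la^{i+1}}(\al)=e'_\la(\al)-e'_{r_{i+1}\cdots r_1.\la}(\al)$ once we reindex $t=i+1$. For the numerators, by Lemma \ref{L:actionofD'alpha} (and the formula for $r_{\la\mu}$ there, or equivalently the description of $r_j^i(k)$ in Definition \ref{D:risingfiltration}), when $\la^{i+1}=r_{j_{i+1}}^{i_{i+1}}(k_{i+1}).\la^i$ the quantity $r_{\la^i\la^{i+1}}$ equals $(\la^i)_{i_{i+1}}-(\la^i)_{j_{i+1}}+2k_{i+1}$ up to the multiplicity factor $(1+\delta_{\la^i_{i_{i+1}}\la^i_{j_{i+1}}})^{-1}m_{\cdot}(\la^i)(m_\cdot(\la^i)-\delta)$. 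Here I would need to be slightly careful: the statement of the corollary writes the numerator simply as $(r_{t-1}\cdots r_1.\la)_{i_t}-(r_{t-1}\cdots r_1.\la)_{j_t}+2k_t$, so the multiplicity prefactor must be absorbed into the combinatorics — it is accounted for exactly because in Theorem \ref{T:combinatorial formula} the sum runs over ordered filtrations of \emph{rows} rather than over distinct partitions, and rewriting the row-indexed sum as a sum over operator sequences $(\underline r)$ with $r_t=r_{j_t}^{i_t}(k_t)$ ranging over all admissible choices of row indices $(i_t,j_t)$ reproduces precisely that multiplicity. Shifting index $i\mapsto t-1$ in the product and reading off $e'_\la(\al)-e'_{r_t\cdots r_1.\la}(\al)$ then yields the displayed formula.

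The main obstacle I anticipate is bookkeeping rather than anything conceptual: making precise the bijection between the set of moving up filtrations (as sequences of partitions) appearing in Theorem \ref{T:combinatorial formula} and the set of operator sequences $(\underline r)$ appearing in the corollary, including verifying that the numerator $r_{\la^{t-1}\la^t}$ from Lemma \ref{L:actionofD'alpha}, together with its multiplicity factor, collapses to the clean expression $(r_{t-1}\cdots r_1.\la)_{i_t}-(r_{t-1}\cdots r_1.\la)_{j_t}+2k_t$ once one sums over the distinct ways of choosing the row indices that realize a given partition-step. I would handle this by checking that for a single step, $\sum_{(i,j)}\bigl[(\la)_i-(\la)_j+2k\bigr]$ over all pairs $i<j$ with $(\la)_i,(\la)_j$ prescribed values equals $m_{(\la)_i}(\la)\,(m_{(\la)_j}(\la)-\delta)\,(1+\delta)^{-1}\,\bigl[(\la)_i-(\la)_j+2k\bigr]$, which is exactly the coefficient $r_{\la\mu}$ of Lemma \ref{L:actionofD'alpha}; then the general case follows by iterating this one-step identity along the filtration and invoking Theorem \ref{T:combinatorial formula}. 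This reduces the whole corollary to the two results already in hand plus this elementary counting identity.
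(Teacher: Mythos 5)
Your proposal is correct and follows exactly the route the paper intends: the paper states this corollary without proof as an immediate consequence of Theorem \ref{T:combinatorial formula}, and your derivation---unwinding $f_\la(\delta)$ and converting the sum over moving up filtrations of partitions into a sum over operator sequences, with the multiplicity factor $(1+\delta_{\lambda_i\lambda_j})^{-1}m_{\lambda_i}(\la)(m_{\lambda_j}(\la)-\delta_{\lambda_i\lambda_j})$ in $r_{\la\mu}$ accounted for by the number of row-index pairs $(i,j)$ realizing a given partition step---is precisely the intended bookkeeping. The one-step counting identity you isolate is the whole content of the reduction, and it holds as you state it.
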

Note that $q_{r_j^i(k).\la}=R_{ij}^k.q_\la$, using the usual rising
operator as in Corollary \ref{C:action-of-D} (Thus the summands
corresponding to $l\leq 1$ are essentially given in rising operator
formula). Like the usual rising operator formula, for each
$\mu\geq\la$, only finitely many sequences $(r_l,\cdots,r_2,r_1)$
contribute to the term $q_\mu$. In this sense our raising operator formula generalizes
the canonical Schur case to the Jack case. The difference from a usual raising
operator formula is that one needs to rearrange the parts before the next action.

\bibliographystyle{amsalpha}

\end{document}